\newtheorem{thm}{Theorem}[section]
\newtheorem{lem}[thm]{Lemma}
\newtheorem{lem-dfn}[thm]{Lemma-Definition}
\newtheorem{prop}[thm]{Proposition}
\newtheorem{cor}[thm]{Corollary}
\newtheorem{ass}[thm]{Assumption}
\theoremstyle{definition}
\newtheorem{defn}[thm]{Definition}
\newtheorem{ex}[thm]{Example}
\newtheorem{prob}[thm]{Problem}
\newtheorem*{acknowledgement}{Acknowledgement}
\theoremstyle{remark}
\newtheorem{clm}[thm]{Claim}
\newtheorem{rem}[thm]{Remark}
\numberwithin{equation}{section}
\newcommand{\thmref}[1]{Theorem~\ref{#1}}
\newcommand{\lemref}[1]{Lemma~\ref{#1}}
\newcommand{\corref}[1]{Corollary~\ref{#1}}
\newcommand{\proref}[1]{Proposition~\ref{#1}}
\newcommand{\remref}[1]{Remark~\ref{#1}}
\newcommand{\clmref}[1]{Claim~\ref{#1}}
\newcommand{\defref}[1]{Definition~\ref{#1}}
\newcommand{\exref}[1]{Example~\ref{#1}}
\newcommand{\figref}[1]{Figure~\ref{#1}}
\newcommand{\sref}[1]{Section~\ref{#1}}
\DeclareMathOperator{\ann}{Ann}
\DeclareMathOperator{\Spec}{Spec}
\DeclareMathOperator{\spec}{Spec}
\DeclareMathOperator{\proj}{Proj}
\DeclareMathOperator{\supp}{Supp}
\DeclareMathOperator{\Hom}{Hom}
\DeclareMathOperator{\Coker}{Coker}
\DeclareMathOperator{\pic}{Pic}
\DeclareMathOperator{\di}{div}
\DeclareMathOperator{\br}{\bar r}
\DeclareMathOperator{\gon}{gon}
\newcommand{\m}{\mathfrak m}
\newcommand{\PP}{\mathbb P}
\newcommand{\Z}{\mathbb Z}
\newcommand{\Q}{\mathbb Q}
\newcommand{\C}{\mathbb C}
\newcommand{\cB}{\mathcal B}
\newcommand{\cE}{\mathcal E}
\newcommand{\cF}{\mathcal F}
\newcommand{\cL}{\mathcal L}
\newcommand{\cO}{\mathcal O}
\renewcommand{\:}{\colon}
\newcommand{\ol}[1]{\overline {#1}}
\newcommand{\fl}[1]{\left\lfloor #1 \right\rfloor}
\newcommand{\mc}[1]{#1_{\mathrm{min}}}
\newcommand{\red}[1]{#1_{\mathrm{red}}}
\newcommand{\defset}[2]{{\left\{#1\,\left| \,#2 \right. \right\}}}
\begin{document}
\title[R{\"o}hr's vanishing theorem and the normal reduction number]{A variant of R{\"o}hr's vanishing theorem with an application to the normal reduction number for normal surface singularities}

\author{Tomohiro Okuma}
\address[Tomohiro Okuma]{Department of Mathematical Sciences, 
Yamagata University,  Yamagata, 990-8560, Japan.}
\email{okuma@sci.kj.yamagata-u.ac.jp}
\author{Kei-ichi Watanabe}
\address[Kei-ichi Watanabe]{Department of Mathematics, College of Humanities and Sciences, 
Nihon University, Setagaya-ku, Tokyo, 156-8550, Japan and 
Organization for the Strategic Coordination of Research and Intellectual Properties, Meiji University
}
\email{watnbkei@gmail.com}
\author{Ken-ichi Yoshida}
\address[Ken-ichi Yoshida]{Department of Mathematics, 
College of Humanities and Sciences, 
Nihon University, Setagaya-ku, Tokyo, 156-8550, Japan}
\email{yoshida.kennichi@nihon-u.ac.jp}
\thanks{TO was partially supported by JSPS Grant-in-Aid 
for Scientific Research (C) Grant Number 21K03215.
KW  was partially supported by JSPS Grant-in-Aid 
for Scientific Research (C) Grant Number 23K03040.
KY was partially supported by JSPS Grant-in-Aid 
for Scientific Research (C) Grant Number 24K06678.}
\keywords{Surface singularity, two-dimensional normal local domain, normal reduction number, arithmetic genus, vanishing theorem, almost cone singularity}
\subjclass[2020]{Primary: 14J17; Secondary: 14B05, 13B22, 13G05}

\begin{abstract} 
Let $A$ be an excellent two-dimensional normal local ring containing an algebraically closed field and let $X\to \spec (A)$ be a resolution of singularity.
We prove a theorem giving a condition under which the dimension of the cohomology group of invertible sheaves on $X$ coincides with a natural lower bound.
Applying this theorem, we establish upper bounds for the normal reduction number $\br(A)$ of $A$. 
For example, we prove the inequality $\br(A) \le p_a(A)+1$, where $p_a(A)$ denotes the arithmetic genus, a fundamental combinatorial (topological) invariant.
We introduce the notion of almost cone singularities and give a sharper
 inequality $\br(A) \le p_f(A)+1$ for such singularities, 
where $p_f(A)$ denotes the fundamental genus.
We also show that $\br(A)$ is not a combinatorial invariant in general.
\end{abstract}

\maketitle

\section{Introduction}
Let $(A, \m, k)$ be an excellent two-dimensional normal local domain containing an algebraically closed field isomorphic to the residue field $k$. 
In this paper, we simply call such a local ring a {\em normal surface singularity}.
Let $X\to \spec (A)$ be a resolution of the singularity with exceptional set $E=\bigcup_{i=1}^n E_i$, where $E_i$ are irreducible components. 
We call a divisor $C$ on $X$ a cycle if $C$ is supported in $E$, and write $\chi(C)=\chi(\cO_C)$.
It is known that many important invariants and properties are obtained from the cohomology of invertible sheaves on $X$.
There are various types of vanishing theorems for invertible sheaves on $X$ and they play essential roles in the study of singularities. Laufer's vanishing theorem (\cite{la.rat}), which is a generalization of the Grauert-Riemenschneider vanishing theorem (\cite{g-r.vanish}), is one of the useful theorems and states that for an invertible sheaf $\cL$ on $X$, we have $H^1(\cL)=0$ if $\cL E_i\ge K_XE_i$ for every $E_i \subset E$, where $K_X$ denotes the canonical divisor on $X$.
Note that $K_XC=-2\chi(C)-C^2$ by the Riemann-Roch formula, and $-C^2>0$ for any cycle $C>0$.
Let $\cB$ denote the set of all cycles that appear in a computation sequence for the fundamental cycle $Z_f$ (see. \sref{s:CCC}).
R{\"o}hr \cite{Ro} proved the following.

\begin{thm}[R{\"o}hr]
\label{t:rohrV}
$H^1(\cL)=0$ if $\cL C> -2\chi(C)$ for every cycle $C\in \cB$.
\end{thm}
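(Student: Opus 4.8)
The plan is to reduce to a vanishing statement on a single fundamental cycle and then dualize. First I would invoke the theorem on formal functions: $R^1f_*\cL$ is a finite-length $A$-module, so $H^1(\cL)=\varprojlim_m H^1(\cL\otimes\cO_{mZ_f})$, and the transition maps $H^1(\cL\otimes\cO_{(m+1)Z_f})\to H^1(\cL\otimes\cO_{mZ_f})$ are surjective because the obstruction to lifting lies in an $H^2$ of a sheaf supported on the curve $Z_f$, which vanishes. Hence it suffices to prove $H^1(\cL\otimes\cO_{mZ_f})=0$ for all $m$. Feeding in the exact sequences
\[
0\to \cL(-\ell Z_f)\otimes\cO_{Z_f}\to \cL\otimes\cO_{(\ell+1)Z_f}\to \cL\otimes\cO_{\ell Z_f}\to 0,
\]
this follows once $H^1(\cM\otimes\cO_{Z_f})=0$ for every twist $\cM=\cL(-\ell Z_f)$ with $\ell\ge 0$. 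Each such $\cM$ inherits the hypothesis, since $Z_f\cdot C\le 0$ for $0\le C\le Z_f$ gives $\cM\cdot C\ge\cL\cdot C>-2\chi(C)$. So the theorem reduces to the single-cycle claim: if $\cM\cdot C>-2\chi(C)$ for all $C\in\cB$, then $H^1(\cM\otimes\cO_{Z_f})=0$.

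For this single-cycle claim I would dualize. Since $Z_f$ is an effective Cartier divisor on the smooth surface $X$ it is Gorenstein, with invertible dualizing sheaf $\omega_{Z_f}=\cO_X(K_X+Z_f)\otimes\cO_{Z_f}$, so Serre duality gives $H^1(\cM\otimes\cO_{Z_f})\cong H^0(\cN\otimes\cO_{Z_f})^\vee$ with $\cN=\cO_X(K_X+Z_f)\otimes\cM^{-1}$. Using $K_X\cdot C=-2\chi(C)-C^2$, a direct computation turns the hypothesis $\cM\cdot C>-2\chi(C)$ into $\cN\cdot C<(Z_f-C)\cdot C$ for every $C\in\cB$; indeed $(Z_f-C)\cdot C-\cN\cdot C$ equals $\cM\cdot C+2\chi(C)$, so the two forms of the inequality coincide. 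Thus I am reduced to showing that $\cN\otimes\cO_{Z_f}$ has no nonzero global section.

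To rule out sections I would use the maximal-vanishing-cycle device. Given $0\ne s\in H^0(\cN\otimes\cO_{Z_f})$, let $D$ be the largest cycle, $0\le D<Z_f$, on which $s$ vanishes, and set $W=Z_f-D>0$. Then $s$ lifts to a section of $\cN(-D)\otimes\cO_W$ that is not identically zero on any component of $W$; restricting to each $E_i\le W$ shows $(\cN-D)\cdot E_i\ge 0$. Summing over the components of $W$ yields
\[
\cN\cdot W\ \ge\ D\cdot W\ =\ (Z_f-W)\cdot W,
\]
which is exactly the negation of the translated hypothesis at $C=W$. Provided $W\in\cB$, this is the desired contradiction, so no such section exists and the proof is complete.

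The main obstacle is precisely the clause ``provided $W\in\cB$'': I must show that the support cycle $W$ actually occurs in some computation sequence for $Z_f$. Continuing $W$ upward to $Z_f$ is free, because $W<Z_f$ and the minimality of the fundamental cycle forces a component $E\le Z_f-W$ with $W\cdot E>0$, and iterating produces a computation sequence from $W$ to $Z_f$. The delicate direction is building $W$ from $0$ by steps of strictly positive intersection. Since a connected component $W'$ of $W$ again satisfies $\cN\cdot W'\ge(Z_f-W')\cdot W'$ (the cross term $(W-W')\cdot W'$ vanishing), one may assume $W$ connected, where a reduced configuration can be ordered by breadth-first search. The genuinely combinatorial point, which I expect to be the heart of the argument and the reason $\cB$ is defined through computation sequences, is to handle the non-reduced structure of $W$ and thereby certify $W\in\cB$.
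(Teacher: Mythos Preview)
The paper does not prove this theorem itself; it is cited from R\"ohr \cite{Ro}. A proof in the paper's spirit is nonetheless implicit in its tools: R\"ohr's lemma (the combinatorial step whose modified form is \proref{p:Rohr}) extends the inequality $\cL C>-2\chi(C)$ from $C\in\cB$ to all $C\in\cE^+$, after which the duality/minimal-counterexample argument of \lemref{l:nv} (with $Y=X$) finishes. Your route is organized differently and has a real gap exactly where you flag it: there is no reason your cycle $W=Z_f-D$, even after passing to a connected component, should lie in $\cB$. Connectedness of the support is strictly weaker than chain-connectedness once $W$ is non-reduced; in a $D_4$ configuration with central curve $E_0$ and $Z_f=2E_0+E_1+E_2+E_3$, for example, the connected sub-cycle $2E_0+E_1\le Z_f$ is not chain-connected ($E_0+E_1$ is anti-nef on the remaining $E_0$), and nothing in your construction excludes such a $W$.

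The fix is precisely the combinatorics you are missing. The standard route inverts your order of operations: first extend the inequality to all $C\in\cE^+$ via R\"ohr's lemma; then \emph{any} cycle $W$ violating the dual inequality gives a contradiction, with no need to place $W$ in $\cB$. Alternatively, staying within your framework, replace $W$ by its chain-connected component $W_1$ (cf.\ \thmref{t:K}), so that $W_1\in\cB$ and $W_1$ is anti-nef on $W-W_1$. Your pointwise estimate $(\cN-D)\cdot E_i\ge 0$ for $E_i\le W$ gives $(\cN-D)\cdot W_1\ge 0$, and since $(W-W_1)\cdot W_1\le 0$ one obtains
\[
\cN\cdot W_1 \ \ge\ D\cdot W_1 \ \ge\ D\cdot W_1+(W-W_1)\cdot W_1 \ =\ (Z_f-W_1)\cdot W_1,
\]
contradicting the hypothesis at $C=W_1\in\cB$. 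Either way the missing ingredient is the chain-connected-cycle combinatorics; your breadth-first idea handles only the reduced case.
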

We have seen that R{\"o}hr's vanishing theorem is very useful in the study of elliptic singularities and cone singularities (e.g.,  \cite{Ok1}, \cite{Ok2}, \cite{OWY5}, \cite{ORWY}).
In the context of normal surface singularities, it is often important to determine the dimension of cohomology groups for invertible sheaves on resolution spaces, regardless of whether they vanish.
Motivated by this, we extend R{\"o}hr's vanishing theorem under specific conditions.
For a divisor $L$ on $X$, let $L^{\bot}=\bigcup_{LE_i=0} E_i$, and let $p_g(L^{\bot})$ denote the sum of the geometric genus 
of the singularities obtained by contracting $L^{\bot}$ (see \sref{s:Pre}).
If $\cO_X(L)$ has no fixed components, then $h^1(\cO_X(L))\ge p_g(L^{\bot})$ (see \lemref{l:nD}).

 Let $\cB(L)^e$ denote the set of all cycles $C>0$ with the property that $C$ has a decomposition $C=C_1+C_2$ such that $C_1\in \cB$, $C_2\ge 0$,  $LC_1>0$, $LC_2=0$, and $C_1$ is anti-nef on $C_2$ (see \sref{s:R}).
Then, we have the following.

\begin{thm}[see \thmref{t:Main}]
\label{t:Main0}
Assume that $\cO_X(L)$ has no fixed components and that 
 $LC> -2\chi(C)$ for every $C\in \cB(L)^e$.
Then $h^1(\cO_X(L))=p_g(L^{\bot})$. 
\end{thm}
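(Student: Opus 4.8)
The plan is to take the lower bound $h^1(\cO_X(L))\ge p_g(L^{\bot})$ of \lemref{l:nD} for granted and to establish the reverse inequality. Writing $\cL=\cO_X(L)$, I would first reduce to a cohomology computation on a single large cycle: since $H^2$ of any coherent sheaf vanishes on $X$ and $H^1(X,\cL)$ has finite length, the surjections $H^1(X,\cL)\twoheadrightarrow H^1(\cL|_C)$ arising from $0\to \cL(-C)\to \cL\to \cL|_C\to 0$ stabilize, so that $h^1(X,\cL)=h^1(\cL|_C)$ for every sufficiently large cycle $C$. Next I would isolate the contribution of $L^{\bot}$. Because $\cO_X(L)$ has no fixed components, a general section $s$ has an effective divisor $D_s\in|L|$ containing no $E_i$; as $LE_i=0$ for $E_i\subseteq L^{\bot}$ forces $D_s\cdot E_i=0$, the divisor $D_s$ is disjoint from $L^{\bot}$, so $s$ trivializes $\cL$ on a neighbourhood of $L^{\bot}$. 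Hence $\cL\cong\cO_X$ near $L^{\bot}$, and for a cycle $C_0>0$ supported on $L^{\bot}$ one has $h^1(\cL|_{C_0})=h^1(\cO_{C_0})$, which equals $p_g(L^{\bot})$ once $C_0$ is large (equivalently $R^1\phi_*\cL\cong R^1\phi_*\cO_X$ for the contraction $\phi\colon X\to Y$ of $L^{\bot}$).

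The heart of the argument is then to build a large cycle $C$ by a computation sequence adapted to $L$: first lay down a large $L^{\bot}$-supported base $C_0$, which already accounts for $p_g(L^{\bot})$, and afterwards adjoin the components with $LE_i>0$, grouped into blocks so that each block is (a piece of) a cycle in $\cB$ of positive $L$-degree. When a block $C_1\in\cB$ with $LC_1>0$ is added on top of a current cycle $W$ whose $L^{\bot}$-part is $C_2$ (so $LC_2=0$ and $C_1$ is anti-nef on $C_2$), the combined cycle $C_1+C_2$ is precisely an element of $\cB(L)^e$. I would control the growth of $h^1$ across such a block through the exact sequence $0\to \cL(-W)|_{C_1}\to \cL|_{W+C_1}\to \cL|_W\to 0$: since $H^2$ of a sheaf on the curve $C_1$ vanishes, we get $h^1(\cL|_{W+C_1})=h^1(\cL|_W)$ as soon as $H^1(\cL(-W)|_{C_1})=0$, i.e.\ the block causes no increase in cohomology.

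The required vanishing $H^1(\cL(-W)|_{C_1})=0$ is exactly where R{\"o}hr's mechanism enters. Each cycle $C_1$ is an effective Cartier divisor on the smooth surface $X$, hence a Gorenstein curve with dualizing sheaf $\omega_{C_1}=\cO_X(K_X+C_1)|_{C_1}$, so by Serre duality $H^1(\cL(-W)|_{C_1})\cong H^0\big(\cO_X(K_X+C_1+W-L)|_{C_1}\big)^{\vee}$, and it suffices to show this space of sections is zero. Running R{\"o}hr's computation-sequence/no-sections argument inside $C_1$ and using $K_XD=-2\chi(D)-D^2$ together with $W\ge C_2$ and the anti-nef hypothesis, the obstruction to vanishing is governed by the single numerical inequality $L(C_1+C_2)>-2\chi(C_1+C_2)$, which is exactly the hypothesis imposed on the cycle $C_1+C_2\in\cB(L)^e$. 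Thus every positive block leaves $h^1$ unchanged, and iterating over all blocks gives $h^1(\cL|_C)=h^1(\cL|_{C_0})=p_g(L^{\bot})$, which together with \lemref{l:nD} yields the equality.

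The main obstacle I anticipate is organizing the adapted computation sequence so that every step at which $h^1$ could increase is captured by a cycle lying in $\cB(L)^e$: this is the combinatorial role of the decomposition $C=C_1+C_2$ and of the requirement that $C_1$ be anti-nef on $C_2$. One must then match the cohomological condition $H^1(\cL(-W)|_{C_1})=0$ with the stated inequality $LC>-2\chi(C)$, for which the Riemann--Roch bookkeeping $\chi(C_1+C_2)=\chi(C_1)+\chi(C_2)-C_1C_2$ converts the per-block duality estimate into the single condition on $\cB(L)^e$. Care is also needed to choose the base $C_0$ large enough that it already realizes the full $p_g(L^{\bot})$ while remaining compatible with the later additions. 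The remaining, more routine, point is to verify that no section survives in the dual $H^0$ along the computation sequence, arguing component by component that $\cO_X(K_X+C_1+W-L)$ has negative degree on the relevant subcycles.
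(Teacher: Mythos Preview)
Your central vanishing claim---that $H^1(\cL(-W)|_{C_1})=0$ once the block $C_1+C_2\in\cB(L)^e$ satisfies the numerical hypothesis---is false, and this is where the approach breaks down. The point is that $W$ contains the large $L^{\bot}$-supported base $C_0$; if $E_i\le C_1$ lies outside $L^{\bot}$ but meets it, then $W E_i$ grows with $C_0$, so the dual bundle $\cO_X(K_X+C_1+W-L)|_{E_i}$ acquires arbitrarily large positive degree and R{\"o}hr's no-sections mechanism cannot run. Concretely, in \exref{e:KYC} with $m=3$ and $L=-E$, take $C_0=N(E_1+E_2)$ on $L^{\bot}=E_1\cup E_2$, $W=C_2=C_0$, and $C_1=Z_f=E$ (which is anti-nef on $C_0$, so $C_1+C_0\in\cB(L)^e$). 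Then $(L-W)E_3=1-N$, and the surjection $H^1(\cO_E(L-W))\twoheadrightarrow H^1(\cO_{E_3}(L-W))\cong H^1(\cO_{\PP^1}(1-N))$ gives $h^1(\cO_{C_1}(L-W))\ge N-2$; yet $L(C_1+C_0)=1>-2\chi(C_1+C_0)$ since the right side tends to $-\infty$ as $N\to\infty$. Thus the inequality on $C_1+C_2$ does not control $H^1(\cL(-W)|_{C_1})$. (There is also a preliminary issue: a block supported only on the components with $LE_i>0$ and touching $L^{\bot}$ is never anti-nef on $C_0$, so one is forced to enlarge $C_1$ into $L^{\bot}$, after which the obstruction above is unavoidable.)

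The paper sidesteps this by contracting $L^{\bot}$. With $f\colon X\to Y$ the contraction, the Leray spectral sequence yields $h^1(\cO_X(L))=h^1(\cO_Y(f_*L))+p_g(L^{\bot})$ (\lemref{l:chif+}), reducing the problem to $H^1(\cO_Y(f_*L))=0$. On $Y$ every exceptional curve meets $f_*L$ positively; a pull-back $f^+W\in\cE^+(L)$ of an arbitrary cycle $W>0$ on $Y$ is introduced with $(f_*L)W=L\,f^+W$ and $\chi(W)\ge\chi(f^+W)$ (\lemref{l:chif+}). The hypothesis is first propagated from $\cB(L)^e$ to all of $\cE^+(L)$ via the CCC decomposition (\proref{p:Rohr}), giving $(f_*L)W>-2\chi(W)$ for every $W$, and a duality argument on the possibly singular curves of $Y$ (\lemref{l:nv}) finishes the vanishing. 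The essential difference from your plan is that the $p_g(L^{\bot})$ contribution is isolated by Leray rather than by a thick base cycle, so one never has to add $L$-positive components on top of a large $L^{\bot}$-pile.
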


Note that $p_g(L^{\bot})=0$ if and only if $L^{\bot}$ contracts to rational singularities. 
By Artin's results \cite{Ar-rat}, the condition $p_g(L^{\bot})=0$ is equivalent to $\chi(C)>0$ for every $C>0$ such that $\supp(C)\subset \supp(L^{\bot})$.
Since the cycle $C=C_1+C_2$ in \thmref{t:Main0} satisfies $\chi(C)=\chi(C_1)+\chi(C_2)-C_1C_2\ge \chi(C_1)+\chi(C_2)$,  \thmref{t:Main0} deduces R{\"o}hr's vanishing theorem for $\cO_X(L)$ without fixed components.

In \cite{OWY4}, the authors introduced the {\em normal reduction number} $\br$ and proved its basic properties.
For any integrally closed $\m$-primary ideal $I \subset A$ and 
its minimal reduction $Q$, we put  
\begin{align*}
\br(I)&=\min\defset{r \in \Z_{>0} }{ \overline{I^{n+1}}=Q \overline{I^n} 
\; \text{for all $n \ge r$}},
\\
\br(A)&=\max\defset{\br(I)}{ I \subset A: \text{  integrally closed $\m$-primary ideal}}.
\end{align*}
These invariants can also be expressed in terms of cohomology (see \proref{p:brq}).
We will apply \thmref{t:Main0} to obtain upper bounds for the normal reduction number of surface singularities (see \thmref{t:lambda}).

By \cite[\S 2.1]{OWY4}, we have $\br(A) \le p_g(A)+1$, where $p_g(A)=\dim_k H^1(\cO_X)$, the geometric genus of $A$.
We expect that the normal reduction number may play an important role as a fundamental invariant of normal surface singularities.  For example, we see that $A$ is a rational singularity if and only if $\br(A)=1$. 
However, computing the normal reduction number has been difficult, and no good combinatorial bounds have been known; here, an invariant of $A$ is said to be {\em combinatorial} (or, {\em topological}) if it is determined by the resolution graph of $A$.
Recall that the fundamental genus $p_f(A)=1+Z_f(Z_f+K_X)/2$ and the arithmetic genus $p_a(A)=\max\defset{1-\chi(C)}{\text{$C>0$ is a cycle on $X$}}$ are 
  very fundamental combinatorial invariants of $A$ and satisfy $p_f(A) \le p_a(A) \le p_g(A)$. 
In general, the differences 
$p_g(A)-p_a(A)$ and $p_a(A)-p_f(A)$  can take various positive integers; for example,
if $A=k[[x,y,z]]/(f)$, $f$ is a homogeneous polynomial of degree $d$, then $p_f(A)=(d-1)(d-2)/2$, $p_a(A)=1+du(d-u-2)/2$, where $u=\fl{d/2}-1$, and $p_g(A)=d(d-1)(d-2)/6$. 
Moreover, 
 for any $n\in \Z_{>0}$, there exists a singularity $A$ with $p_a(A)=1$ and $p_g(A)=n$ (cf. \exref{e:KYC}).
Our arguments in the proof of \thmref{t:Main0} provides an alternative proof of the following.

\begin{thm}[\cite{NNO} (see \thmref{t:brp})]
\label{t:brp0}
$\br(A) \le p_a(A)+1$.
\end{thm}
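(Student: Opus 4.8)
The plan is to prove the uniform bound $\br(I)\le p_a(A)+1$ for every integrally closed $\m$-primary ideal $I$, which gives the theorem since $\br(A)=\max_I\br(I)$. Fix a resolution $X$ on which $I\cO_X=\cO_X(-Z)$ is invertible, $Z$ being the associated anti-nef cycle, so that $\overline{I^n}=H^0(\cO_X(-nZ))$ and each $\cO_X(-nZ)$ is generated by global sections, in particular has no fixed components. Set $w_n=h^1(\cO_X(-nZ))$. By \lemref{l:nD} we have $w_n\ge p_g(Z^{\bot})$ for all $n\ge 1$ (note $(-nZ)^{\bot}=Z^{\bot}$), and for $n\gg 0$ the hypothesis of \thmref{t:Main0} holds for $L=-nZ$, so $w_n=p_g(Z^{\bot})$ eventually. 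The first step is to feed the cohomological description of $\br(I)$ from \proref{p:brq} into this picture: it reduces the claim $\br(I)\le p_a(A)+1$ to showing that $w_n$ has already reached its stable value $p_g(Z^{\bot})$ by the time $n=p_a(A)$.

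The second step is to establish $w_{p_a(A)}=p_g(Z^{\bot})$ by rerunning the computation-sequence argument behind \thmref{t:Main0} with $L=-p_a(A)Z$. Here I would use that the relevant family of cycles is stable in $n$: the defining conditions of $\cB(-nZ)^e$ — namely $(-nZ)C_1>0$ and $(-nZ)C_2=0$ — depend only on $-Z$, so $\cB(-nZ)^e=\cB(-Z)^e$ for every $n\ge 1$. For a cycle $C=C_1+C_2$ in this set one computes $(-nZ)C=n\,(-ZC_1)\ge n$, since $-Z$ is nef and $-ZC_1>0$ forces $-ZC_1\ge 1$; and, because $C_1$ is anti-nef on $C_2$, one has $\chi(C)\ge \chi(C_1)+\chi(C_2)$, so from $1-\chi(C)\le p_a(A)$ we get $-\chi(C)\le p_a(A)-1$.

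The main obstacle is the sharpness of the constant. The factor-two inequality $(-nZ)C>-2\chi(C)$ that \thmref{t:Main0} uses to compute $h^1$ exactly, combined with the crude bound $-2\chi(C)\le 2(p_a(A)-1)$, only forces stabilization around $n\approx 2p_a(A)$, which is too weak. To reach the stable value already at $n=p_a(A)$ one must show that the obstruction attached to each increment $C\in\cB(-Z)^e$ in fact vanishes under the effective, factor-one estimate $(-nZ)C>-\chi(C)$, which does hold at $n=p_a(A)$ since $n\,(-ZC_1)\ge p_a(A)>p_a(A)-1\ge -\chi(C)$. Carrying this out requires a step-by-step genus bookkeeping along the computation sequence — controlling each partial Euler characteristic and separating the contribution of $C_2$, which accounts for exactly $p_g(Z^{\bot})$, from that of $C_1$ — rather than a single global estimate; this refinement from the factor-two vanishing condition to the factor-one stabilization condition is where the real work lies. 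Granting it, $w_{p_a(A)}=p_g(Z^{\bot})$, whence $\br(I)\le p_a(A)+1$ for every $I$, and therefore $\br(A)\le p_a(A)+1$.
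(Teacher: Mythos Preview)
Your reduction is correct: by \proref{p:brq} it suffices to show $h^1(\cO_X(-p_a(A)Z))=p_g(Z^{\bot})$, and you rightly observe that a direct application of \thmref{t:Main0} with $L=-nZ$ only yields this for $n>2(p_a(A)-1)$, giving roughly $\br(I)\le 2p_a(A)$ rather than $p_a(A)+1$.

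The gap is that you do not close this factor-of-two discrepancy. You propose that a ``factor-one'' criterion $(-nZ)C>-\chi(C)$ should suffice in place of $(-nZ)C>-2\chi(C)$, and then write ``Granting it''; but no such general criterion holds, and the vague appeal to ``step-by-step genus bookkeeping along the computation sequence'' does not supply one. The factor $2$ in \lemref{l:nv} and in R{\"o}hr's argument is intrinsic to Serre duality ($\deg\omega_W=-2\chi(W)$), so some input beyond the computation-sequence combinatorics is required.

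The paper's actual argument supplies exactly this missing input, and it is geometric rather than combinatorial. One passes to the partial resolution $f\:X\to Y$ contracting $Z^{\bot}$; by \lemref{l:chif+}(2) it suffices to show $H^1(\cO_Y(-p\,f_*Z))=0$ with $p=p_a(A)$. If this fails, take a minimal cycle $W>0$ on $Y$ with $H^1(\cO_W(-p\,f_*Z))\ne 0$. Then \lemref{l:nv} gives $0<p\,(-f_*Z)W\le -2\chi(W)\le 2(p-1)$, the last inequality coming from $\chi(W)\ge\chi(f^+W)\ge 1-p$ via \lemref{l:chif+}(3). Hence $(-f_*Z)W<2$, so $(-f_*Z)W=1$ and $W$ is a reduced irreducible curve. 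Now the decisive extra fact: $\cO_W(-f_*Z)$ is \emph{globally generated} (since $I\cO_X=\cO_X(-Z)$), so two general sections define a finite degree-$1$ morphism $W\to\PP^1$, forcing $W\cong\PP^1$; this contradicts $p_a(W)=1-\chi(W)>1$. It is this last step---exploiting global generation to rule out the boundary case $(-f_*Z)W=1$---that bridges the factor-of-two gap, and it is absent from your proposal.
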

This theorem immediately implies that $\br(A) = 2$ if $A$ is an elliptic singularity, namely, if $p_a(A)=1$ (cf. \cite[Theorem 3.3]{Ok2}).
We provide a counter-example to the converse of this result (\exref{e:br346}).
Consequently, we obtain partial answers to \cite[Problem 3.1-3.2]{o.NRN}.

We introduce almost cone singularities (\sref{s:ACS}), which generalize cone singularities, and prove the following.

\begin{thm}[see \thmref{t:ACmain}]
\label{t:conepf}
Let $A$ be an almost cone singularity.
Then $\br(A) \le p_f(A)+1$.
Note that $p_f(A)$ coincides with the genus of the central curve.
\end{thm}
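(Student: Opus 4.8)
The plan is to translate the bound on $\br(A)$ into a cohomological vanishing statement along the resolution and then to verify that vanishing by combining the variant of R\"ohr's theorem (\thmref{t:Main0}) with the star-shaped geometry of an almost cone singularity. Recall (\proref{p:brq}) that for an integrally closed $\m$-primary ideal $I$ with associated anti-nef cycle $Z$ --- so that $\overline{I^n}=H^0(X,\cO_X(-nZ))$ and a minimal reduction $Q=(a,b)$ satisfies $Q\cO_X=\cO_X(-Z)$ --- the reduction number $\br(I)$ is governed by the groups $H^1(\cO_X(-nZ))$. Concretely, the Koszul relation induced by $a,b$ gives an exact sequence
\[
H^0(\cO_X(-nZ))^{\oplus 2}\xrightarrow{(a,b)} H^0(\cO_X(-(n+1)Z))\to H^1(\cO_X(-(n-1)Z)),
\]
so that $\overline{I^{n+1}}=Q\overline{I^n}$ as soon as $H^1(\cO_X(-(n-1)Z))=0$. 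Hence it suffices to prove that $H^1(\cO_X(-mZ))=0$ for all $m\ge p_f(A)$ and every integrally closed $\m$-primary $I$; taking the maximum over $I$ then yields $\br(A)\le p_f(A)+1$.

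Next I would make the almost cone structure explicit. On the minimal good resolution the exceptional set consists of a central curve $E_0$ of genus $g$ together with arms that are chains of rational curves contracting to cyclic quotient (hence rational) singularities. Two facts follow from this shape. First, computing $p_f(A)=1+\tfrac12 Z_f(Z_f+K_X)=p_a(Z_f)$ on the star-shaped graph and using that the arms are trees of rational curves gives $p_f(A)=g$, which settles the parenthetical assertion. Second, for the cycle $Z$ attached to any such ideal, the orthogonal locus $(-mZ)^{\bot}$ is contained in the union of the rational arms, so $(-mZ)^{\bot}$ contracts to rational singularities and $p_g((-mZ)^{\bot})=0$ by Artin's criterion.

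With these in hand I would apply \thmref{t:Main0} to $L=-mZ$ for $m\ge g$. Since $I$ is integrally closed, $\cO_X(-mZ)$ has no fixed components, and the previous paragraph gives $p_g(L^{\bot})=0$. It remains to check the numerical hypothesis $(-mZ)C>-2\chi(C)$ for every $C\in\cB(-mZ)^e$; writing $C=C_1+C_2$ with $C_1\in\cB$, $(-mZ)C_1>0$, $(-mZ)C_2=0$ and $C_1$ anti-nef on $C_2$, the piece $C_2$ lies in the rational arms so $\chi(C_2)>0$, and the only genuine constraint comes from the part of $C_1$ meeting $E_0$. There the inequality reduces to a lower bound on $\deg\bigl(\cO_X(-mZ)|_{E_0}\bigr)=-mZ\cdot E_0$, and the almost cone hypothesis is exactly what forces this degree to exceed $2g-2$ once $m\ge g$, i.e.\ to cross the Riemann--Roch threshold on the genus-$g$ central curve. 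Granting this, \thmref{t:Main0} yields $h^1(\cO_X(-mZ))=p_g(L^{\bot})=0$ for all $m\ge g=p_f(A)$, which is the required vanishing.

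The step I expect to be the main obstacle is precisely this last numerical verification. One must control the intersection numbers $-mZ\cdot E_i$ simultaneously on $E_0$ and along every arm, confirm that the decomposition $C=C_1+C_2$ defining $\cB(-mZ)^e$ always isolates the rational components into $C_2$, and --- most delicately --- show that the per-step positivity of $\cO_X(-Z)$ along $E_0$ built into the almost cone hypothesis is strong enough to push $-mZ\cdot E_0$ past $2g-2$ at the threshold $m=g$, \emph{uniformly over all} integrally closed $\m$-primary $I$. It is here that the full force of the definition in \sref{s:ACS} (the positivity of $\cO_X(-Z)$ on the central curve, together with the rationality of the arms) must be used, and where the bookkeeping with the cycles $C_1$ that straddle $E_0$ and an arm is heaviest.
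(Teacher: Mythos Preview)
Your framework has a genuine gap: you assume that $Z^{\bot}$ is always contained in the rational arms, i.e.\ that $ZC<0$ for the central curve $C$. This is not part of the almost cone hypothesis. The definition (\defref{d:AC}) only requires $Z_fC<0$ for the \emph{fundamental} cycle; for an arbitrary anti-nef cycle $Z$ representing an integrally closed $\m$-primary ideal, it is perfectly possible that $ZC=0$. When that happens, $C\subset Z^{\bot}$, so $p_g(Z^{\bot})\ge h^1(\cO_C)=g\ge 1$, and by \lemref{l:nD} we have $h^1(\cO_X(-mZ))\ge p_g(Z^{\bot})>0$ for every $m$. Thus your target ``$H^1(\cO_X(-mZ))=0$ for $m\ge g$'' is simply false in this case, and the Koszul reduction you set up cannot conclude anything. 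The correct target, via \proref{p:brq}, is $h^1(\cO_X(-mZ))=p_g(Z^{\bot})$, and reaching it when $C\subset Z^{\bot}$ requires a genuinely different argument.

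The paper splits exactly along this dichotomy. In the case $ZC<0$ your outline is essentially right, but one point you leave vague is decisive: since $\cO_X(-Z)$ is generated, $\cO_C(-Z)$ defines a morphism $C\to\PP^1$ of degree $-ZC$, hence $-ZC\ge \gon(C)\ge 2$; this factor of $2$ is what makes $-mZC\ge 2m>2g-2$ hold already at $m=g$ (with $-ZC=1$ it would fail). For the case $ZC=0$, the paper does \emph{not} use \thmref{t:Main0}. Instead it constructs, via \lemref{l:connecting}, an auxiliary cycle $W$ supported on the connected component of $Z^{\bot}$ containing $C$, with $W$ anti-nef there, reduced at its connecting components, and $-WC\ge 2$; then $Z+W$ is anti-nef, the original R\"ohr vanishing applies to $\cO_X(-s(Z+W))$ for $s>(2g-2)/2$, and an exact-sequence comparison with $\cO_X(-sZ)$ yields $h^1(\cO_X(-sZ))=p_g(Z^{\bot})$. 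This second case is where the real work lies, and your proposal does not address it.
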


This paper is organized as follows.
In \sref{s:Pre}, we summarize the terminology and basic notions concerning cycles on a resolution of singularities.
This also includes a brief overview of the chain-connected component decomposition of cycles introduced by Konno \cite{Ko-CC}, which serves as a preparation for \thmref{t:Main0}.
In \sref{s:vanish}, we prove \thmref{t:Main0}.
To this end, we consider the contraction $f\: X\to Y$ of $L^{\bot}$ and reduce the problem to the vanishing of the cohomology of $f_*L$ on the normal surface $Y$. 
We introduce a pull-back $f^+W$ of an arbitrary cycle $W$ on $Y$, and reformulate the condition for the vanishing in terms of $f^+$ in \thmref{t:Main0}. 
In \sref{s:nr}, we apply the results in \sref{s:vanish} to establish upper bounds of the normal reduction number (\thmref{t:lambda}, \thmref{t:brp}).
In \sref{s:ACS}, we introduce the notion of {\em almost cone singularities} and prove \thmref{t:conepf}.
In \sref{s:example}, we prove that $\br(A)=2$ for a non-elliptic hypersurface singularity (\exref{e:br346}), and show that there exist cone singularities with the same resolution graph but different normal reduction number (\exref{e:non-topol}). Then it follows that $\br(A)$ is not a combinatorial invariant in general.
From the results mentioned above, it is natural to ask if $\br(A) \le p_f(A)+1$ holds in general. 
We also give an explicit example to show that this inequality does not hold in general (\exref{e:tomari}).

\begin{acknowledgement}
The authors are grateful to Professor Masataka Tomari for letting us know the singularity in \exref{e:tomari}.
\end{acknowledgement}

\section{Preliminaries}
\label{s:Pre}

Let $(A,\m,k)$ be a {\em normal surface singularity}, namely, an excellent two-dimensional normal local ring containing an algebraically closed field isomorphic to the residue field $k$.
Let $\pi\:X \to \spec (A)$ be a resolution of singularities with exceptional 
set $E:=\pi^{-1}(\m)$ and let $E=\bigcup_{i=1}^n E_i$ be the decomposition into the irreducible components.
A divisor on a resolution space whose support is contained in the exceptional set is called a {\em cycle}.
Let $\cE$ denote the set of all cycles on $X$ and $\cE^+$ the set of all positive cycles; 
\[
\cE=\sum_{i=1}^n \Z E_i, \quad \cE^+=\defset{C\in \cE}{C>0}.
\]
The support of a divisor $D$ is denoted by $\supp(D)$.
We say that a divisor $L$ on $X$ is {\em nef} on a cycle $C>0$ if $LE_i \ge 0$ for all $E_i \subset \supp(C)$; we simply say that $L$ is {\em nef} if it is nef on $E$.
The divisor $L$ is said to be {\em anti-nef} (on $C$) if $-L$ is nef (on $C$).
It is known that the set 
\[
\defset{Z\in \cE^+}{\text{$\supp(Z)=\supp(C)$ and $Z$ is anti-nef on $C$}}
\]
has a minimum, which is called the {\em fundamental cycle on $C$}; in case that $C=E$, we simply call it the {\em fundamental cycle} and denote it by $Z_f$.
A cycle $F > 0$ is called the {\em fixed part} of the divisor $L$ or the invertible sheaf $\cO_X(L)$ on $X$ if $F$ is the maximal cycle such that $H^0(\cO_X(L-F))=H^0(\cO_X(L))$.  
An irreducible component of the fixed part $F$ is called a {\em fixed component}.
For example, if $C>0$ is a cycle, then $-C$ has no fixed components if and only if there exists an element $h\in H^0(\cO_X(-C))$ such that $\di_X(h)=C+H$, where $H$ does not contain any component of $E$; 
if this is the case, we have that for a cycle $W>0$ with $CW=0$,
\begin{equation}
\label{eq:trivialW}
\cO_W(-C) \cong \cO_W(H)=\cO_W.
\end{equation}

 For a cycle $C\in \cE^+$, we regard it as a scheme with structure sheaf $\cO_C:=\cO_X/\cO_X(-C)$, and write 
\[
\chi(C)=\chi(\cO_C):=h^0(\cO_C)-h^1(\cO_C),
\]
 where $h^i(\cF)$ denotes $\dim_k H^i(\cF)$.
We define the arithmetic genus $p_a(C)$ of $C$ by  
\[
p_a(C)=1-\chi(C).
\]
By the Riemann-Roch formula, we have $\chi(C)=-C(C+K_X)/2$, where $K_X$ is the canonical divisor on $X$, and thus we obtain 
\begin{equation}
\label{eq:chi}
\chi(C+C')=\chi(C)+\chi(C')-CC'.
\end{equation}
We define the {\em fundamental genus} $p_f(A)$ of $A$ by $p_f(A) =p_a(Z_f)$,
the {\em arithmetic genus} $p_a(A)$ of $A$ by $p_a(A)=\max\defset{p_a(C)}{C\in \cE^+}$,
and the {\em geometric genus} $p_g(A)$ of $A$ by $p_g(A)=h^1(\cO_X)$.
These three genera are independent of the choice of resolution.
While $p_f(A)$ and $p_a(A)$ are combinatorial invariants, 
$p_g(A)$ cannot be determined by the resolution graph in general.
By the formal function theorem, $p_g(A)=\max\defset{h^1(\cO_C)}{C\in \cE^+}$.
 It is known that 
\begin{equation}\label{eq:ppp}
p_f(A) \le p_a(A) \le p_g(A).
\end{equation}

\begin{defn}
The singularity $A$ is called a {\em rational} singularity if $p_g(A)=0$, and an {\em elliptic} singularity if $p_f(A)=1$.
\end{defn}

It is known that $A$ is rational if and only if $p_f(A)=0$ (see \cite{Ar-rat}) and that $A$ is elliptic if and only if $p_a(A)=1$ (see \cite{wag.ell},  \cite{tomari.ell}).

The following example is a special case of \cite[Example 3.10]{Ko-YC}.

\begin{ex}\label{e:KYC}
Let $p, m\in\Z_{>0}$ and $B=k[x,y,z]_{(x,y,z)}$.
Assume that 
\[
A=B/(x^2 + y^{2p+1} + z^{2(2p+1)m}).
\]
Then we have the following:
\[
p_f(A)=p, \quad p_a(A)=mp(p-1)/2 + 1, \quad p_g(A)=p^2m.
\]
By the arguments in \cite{K-N}, we see that if $X$ is the minimal resolution,  $E$ is a chain of nonsingular curves $E_1, \dots, E_m$ with weighted dual graph as in \figref{fig:pfp}, and $\di_X(z)-H=E=Z_f$, where $H$ is the proper transform of the curve $\spec(A/(z)) \subset \spec (A)$.
\begin{figure}[htb]
\[
\xy
(0,0)*+{-1}*+!U(-2.0){E_1}*+!D(-2.0){[p]}*\cir<10pt>{}="A"; (15,0)*+{-2}*+!U(-2.0){E_2}*\cir<10pt>{}="B", (55,0)*+{-2}*+!U(-2.0){E_m} *\cir<10pt>{}="C", 
(30,0)*+{\cdot}; (35,0)*+{\cdot}; (40,0)*+{\cdot};   
\ar @{-} "A" ;"B"  \ar @{-}"B";(25,0)  \ar @{-} (45,0); "C" 
\endxy
\]
\caption{\label{fig:pfp} The resolution graph of $x^2 + y^{2p+1} + z^{2(2p+1)m}=0$}
\end{figure}
\end{ex}

\subsection{The CCC decomposition}\label{s:CCC}
We will provide a brief overview of the chain-connected component decomposition of cycles  introduced by Konno \cite{Ko-CC}.

It is known that the fundamental cycle $Z_f$ can be computed 
via a sequence $\{C_1, \dots, C_m\}$ of cycles such that
\[
 C_1=E_{j_1}, \quad C_i=C_{i-1}+E_{j_i} \ (1< i \le m), \quad C_m=Z_f,
\]
where $E_{j_1}$ is an arbitrary component of $E$ and $C_{i-1}E_{j_i}>0$ 
for $1< i \le m$.
Such a sequence $\{C_i\}_i$ is called a  
 {\em computation sequence} for $Z_f$. 
Let $\cB$ denote the set of all cycles appearing in a computation sequence for $Z_f$.
Then we have the following (see \cite[(2.6), (2.7)]{la.me}): 
\begin{equation}\label{eq: h1B}
h^0(\cO_C)=1, \quad p_a(C)\le p_f(A) \quad \text{for $C\in \cB$}.
\end{equation}

\begin{defn}[{\cite[1.3, 1.6]{Ko-CC}}]
Let $D \in \cE^+$.
Then $D$ is said to be {\em chain-connected} if there does not exist $D_1\in \cE^+$ such that $0< D_1 < D$ and $D_1$ is anti-nef on $D-D_1$.
The maximal chain-connected cycle $C \le D$ is called the {\em chain-connected component} of $D$. Note that the support of a chain-connected cycle is connected.
\end{defn}

\begin{lem}[{\cite[1.2]{Ko-CC}}]
Let $D\in \cE^+$. Then $D\in \cB$ if and only if $D$ is chain-connected.
\end{lem}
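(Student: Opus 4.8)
The plan is to prove the two implications separately, both resting only on the facts that $Z_f$ is anti-nef, that $Z_f$ is the \emph{minimal} positive anti-nef cycle of full support, and that $E_iE_j\ge 0$ whenever $i\neq j$.

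For the implication $D\in\cB\Rightarrow D$ chain-connected, I would argue by contradiction. Write $D=C_r$ in a computation sequence $0=C_0,C_1,\dots,C_m=Z_f$, and suppose $D=D_1+D_2$ with $D_1,D_2>0$ and $D_1$ anti-nef on $D_2$. Let $t$ be the least index with $C_t\not\le D_1$; this exists since $C_0=0\le D_1$ while $C_r=D\not\le D_1$. At this step $C_{t-1}\le D_1$, and the component $E_{j_t}$ added at step $t$ must have the same coefficient in $C_{t-1}$ as in $D_1$, so $E_{j_t}\subset\supp D_2$ while $D_1-C_{t-1}\ge 0$ has coefficient $0$ along $E_{j_t}$; hence $(D_1-C_{t-1})E_{j_t}\ge 0$. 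Combined with the defining inequality $C_{t-1}E_{j_t}>0$ of a computation sequence, this yields $D_1E_{j_t}>0$, contradicting that $D_1$ is anti-nef on $D_2$.

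For the converse I would proceed in three steps. First, chain-connectedness forces $D\le Z_f$: otherwise set $W=\inf(D,Z_f)$ coefficientwise, so that $0<W<D$; for every $E_j\subset\supp(D-W)$ one has $W_j=(Z_f)_j$, whence $E_j\not\subset\supp(Z_f-W)$ and $WE_j\le Z_fE_j\le 0$, showing $W$ is anti-nef on $D-W$ and contradicting chain-connectedness. Second, I would build a computation sequence from a single component up to $D$: start with any $E_{j_1}\subset\supp D$, and as long as the current cycle $C$ satisfies $C\neq D$, apply the definition of chain-connectedness to the splitting $D=C+(D-C)$ to produce a component $E\subset\supp(D-C)$ with $CE>0$, and adjoin it; since each step strictly increases the cycle while keeping it $\le D$, the process terminates at $D$. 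Third, I would extend from $D$ up to $Z_f$ by a going-up lemma: for any positive cycle $C$ with $C\le Z_f$ and $C\neq Z_f$ there is a component $E_j\subset\supp(Z_f-C)$ with $CE_j>0$. Concatenating the two ascending chains produces a computation sequence for $Z_f$ passing through $D$, so $D\in\cB$.

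The main work lies in the converse, and specifically in the two facts linking chain-connectedness to the fundamental cycle. The bound $D\le Z_f$ and the going-up lemma both exploit the anti-nefness and minimality of $Z_f$ together with $E_iE_j\ge 0$; for the going-up lemma I would argue by contradiction, showing first that if no such $E_j$ existed then $CE_j\le 0$ for all $j$ (using $(Z_f-C)E_j\ge 0$ off the support of $Z_f-C$), so that $C$ is a positive anti-nef cycle with $C\le Z_f$, and then that connectedness of $E$ forces $\supp C=E$; hence $C<Z_f$ would violate the minimality of $Z_f$. Once these two facts are in hand, the building-up step is immediate from the very definition of chain-connectedness, and the forward direction is a short bookkeeping argument.
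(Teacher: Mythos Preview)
The paper does not prove this lemma; it merely quotes it from Konno with a citation, so there is no argument in the paper to compare against and your proposal must stand on its own.

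Your converse direction is correct and clean: the coefficientwise minimum $W=\min(D,Z_f)$ forces $D\le Z_f$, chain-connectedness lets you climb from any irreducible component of $D$ up to $D$, and your going-up lemma (anti-nefness plus minimality of $Z_f$, together with connectedness of $E$) extends the chain from $D$ to $Z_f$.

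In the forward direction there is a small but genuine gap. You appeal to ``the defining inequality $C_{t-1}E_{j_t}>0$ of a computation sequence'', but this inequality is only imposed for $t\ge 2$; the \emph{first} component $E_{j_1}$ of a computation sequence is arbitrary, so $C_0E_{j_1}=0$. If the given computation sequence through $D$ starts at a component lying outside $\supp D_1$, then your least index is $t=1$ and no contradiction follows from your argument. A short patch handles this case: take $s\ge 2$ minimal with $E_{j_s}\subset\supp D_1$ (such $s$ exists since $D_1>0$ and every component of $\supp D_1\subset\supp D$ is eventually added). Then $\supp C_{s-1}$ is disjoint from $\supp D_1$ and hence contained in $\supp D_2$, so for every $E_k\le C_{s-1}$ one has $D_1E_k\le 0$ by anti-nefness and $D_1E_k\ge 0$ by disjointness of supports, giving $D_1C_{s-1}=0$. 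Expanding the same product over the components of $D_1$ and using $C_{s-1}E_{j_s}>0$ with $E_{j_s}\subset\supp D_1$ yields $D_1C_{s-1}>0$, the required contradiction.
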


For a given cycle $D\in \cE^+$, there exists a chain-connected component $C_1$ of $D$.
If $D-C_1>0$, then we can take a chain-connected component $C_2$ of $D-C_1$.
Repeating this procedure we obtain a decomposition of $D$ into the sum of chain-connected cycles.

\begin{thm}
[Konno {\cite[1.7]{Ko-CC}}]
\label{t:K}
Let $D\in \cE^+$. 
Then there exist a sequence $D_1, \dots, D_r$ of distinct chain-connected cycles with $D_i \le D$ and a sequence $m_1, \dots, m_r$ of positive integers that satisfy
the following conditions$:$
\begin{enumerate}
\item $D=m_1D_1+\cdots + m_rD_r$.
\item For $i<j$, either $\supp(D_i) \cap \supp(D_j) = \emptyset$ or $D_i\ge D_j$.
\item For $i<j$, $D_i$ is anti-nef on $D_j$.
\item If $m_i\ge 2$, then $D_i$ is anti-nef on $D_i$.
\end{enumerate}
Moreover, the sequences above are unique up to a suitable permutation of the indices. 
\end{thm}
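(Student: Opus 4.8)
The plan is to obtain existence by a greedy peeling procedure and uniqueness by peeling the maximal chain-connected cycles one at a time, with both parts driven by a single structural input, which I will call the Join Lemma: \emph{if $C,C'\in\cE^+$ are chain-connected and $\supp(C)\cap\supp(C')\neq\emptyset$, then their componentwise maximum $M:=\max(C,C')$ is again chain-connected.} I will also use an elementary extension step: a chain-connected cycle $C$ with $CE_i>0$ satisfies $C+E_i\le Z_f$, and $C+E_i$ is again chain-connected. Indeed $C\le Z_f$ by the identification of chain-connected cycles with the members of $\cB$ \cite{Ko-CC}, and since $Z_f$ is anti-nef, $CE_i>0$ forces the coefficient of $E_i$ in $C$ to be strictly smaller than in $Z_f$, so appending $E_i$ to a computation sequence reaching $C$ keeps us inside $\cB$.

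For existence I would set $C_1$ to be a maximal chain-connected cycle with $C_1\le D$ and, inductively, let $C_k$ be a maximal chain-connected cycle with $C_k\le R_k:=D-\sum_{l<k}C_l$; since each $C_k>0$, the process stops with $D=C_1+\cdots+C_s$. The extension step shows that each $C_k$ is anti-nef on $R_{k+1}=R_k-C_k$: if some $E_i\subset\supp(R_{k+1})$ had $C_kE_i>0$, then $C_k+E_i$ would be chain-connected and $\le R_k$, contradicting maximality of $C_k$. Hence for $k<l$ we have $\supp(C_l)\subseteq\supp(R_{k+1})$, so $C_k$ is anti-nef on $C_l$, giving $(3)$, and the case $C_k=C_l$ gives $(4)$. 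For $(2)$, if $\supp(C_k)\cap\supp(C_l)\neq\emptyset$ with $k<l$, then $\max(C_k,C_l)$ is chain-connected by the Join Lemma and $\le R_k$, so maximality of $C_k$ forces $C_l\le C_k$. Collecting equal cycles among the $C_k$ into distinct $D_i$ with multiplicities $m_i$ then yields $(1)$--$(4)$.

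The Join Lemma is the technical heart, and I would prove it by projecting a hypothetical cut of $M$ onto $C$. Suppose $M=A+B$ with $A,B>0$ and $A$ anti-nef on $B$, and write $a_k,b_k,c_k,c'_k$ for the coefficients of $E_k$ in $A,B,C,C'$. Put $A'=\min(A,C)$ and $B'=C-A'\ge0$ componentwise. For $E_k\subset\supp(B')$ one has $c_k>a_k$, hence $b_k=\max(c_k,c'_k)-a_k>0$ and the coefficient of $E_k$ in $A-A'$ is $0$; since $(A-A')E_k\ge0$ and $AE_k\le0$ (as $E_k\subset\supp(B)$), this gives $A'E_k\le0$. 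Thus $A'$ is anti-nef on $B'$, so if both are positive we contradict chain-connectedness of $C$; therefore $C\le A$ or $C\le B$, and likewise for $C'$. The two same-side cases give $M\le A$ or $M\le B$, impossible as $A,B<M$; the opposite-side case $C\le A$, $C'\le B$ fails at any shared component $E_k$, where $\max(c_k,c'_k)=a_k+b_k\ge c_k+c'_k>\max(c_k,c'_k)$.

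For uniqueness I would induct on $\sum_k c_k$, the total multiplicity of $D$, showing that every maximal chain-connected cycle $P\le D$ occurs as a root of any valid decomposition, with forced multiplicity. Fixing such a $P$, the Join Lemma and maximality give $D_i\le P$ for every $D_i$ meeting $\supp(P)$; combined with $(3)$, which forbids two roots from having adjacent supports, and the connectedness of $\supp(P)$, the part of the decomposition supported on $\supp(P)$ has a unique root $Q\le P$ with $\supp(Q)=\supp(P)$. Conditions $(3)$ and $(4)$ then show that $Q$ is anti-nef on $D-Q$, hence on $P-Q$, whose support lies in that of $D-Q$; since $P=Q+(P-Q)$ cannot be an anti-nef splitting of the chain-connected cycle $P$, we conclude $P=Q$, so $P$ is a root. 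Peeling one copy of the same $P$ from two valid decompositions reduces both to $D-P$, and the inductive hypothesis finishes the proof. I expect the main obstacle to be the Join Lemma; once it is in place, the extension step, the ordering, and the identification $P=Q$ all reduce to the single principle that no anti-nef splitting can exist inside a chain-connected cycle.
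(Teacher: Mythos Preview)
The paper does not prove this theorem; it is quoted from Konno \cite{Ko-CC}, with only the one-sentence paragraph preceding the statement indicating the construction (take a chain-connected component $C_1$ of $D$, then of $D-C_1$, and iterate). Your existence argument is exactly this greedy peeling, so on that front you match the paper's sketch; the remainder of your proposal (the Join Lemma, the verification of (2)--(4), and the uniqueness induction) goes well beyond what the paper records and is correct.

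A few remarks for polish. Your Join Lemma proof is clean: projecting a putative splitting $M=A+B$ onto $C$ via $A'=\min(A,C)$, $B'=C-A'$ and checking that $A'$ is anti-nef on $B'$ is the right mechanism, and the case analysis at the end is complete. In the uniqueness step, the claim that $Q$ is anti-nef on $D-Q$ deserves one more line: for a $D_i$ with $i<i_0$, condition (2) and the maximality of $Q$ force $\supp(D_i)\cap\supp(Q)=\emptyset$, and then (3) together with disjointness forces the supports to be non-adjacent, whence $QE_k=0$ for $E_k\in\supp(D_i)$. You essentially say this (``$(3)$ forbids two roots from having adjacent supports''), but it is used again here for $D_i$ with smaller index, not just for roots. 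Finally, after peeling one copy of $P$ from a valid decomposition you should note (it is immediate) that the remaining data still satisfy (1)--(4) for $D-P$, so the induction applies.
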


\begin{defn}
The ordered decomposition $D=m_1D_1+\cdots +m_rD_r$ as in \thmref{t:K} is called a {\em  chain-connected component decomposition} or a {\em CCC decomposition} of $D$.
\end{defn}

\begin{ass}
In the following,  when we say that $D=m_1D_1+\cdots +m_rD_r$ is a CCC decomposition, we assume that this expression satisfies the conditions in \thmref{t:K}.
\end{ass}

\subsection{ A modification of R{\"o}hr's lemma}\label{s:R}
Let us fix a divisor $L$ on $X$ which is not numerically trivial, 
that is, $LE_i\ne 0$ for some $E_i\le E$.
Let 
\[
L^{\bot} = \bigcup_ {E_i \subset E, \, LE_i=0} E_i, 
\] 
and define subsets $\cE^+(L)$ and $\cB(L)$ by
\[
\cE^+(L) = \defset{C\in \cE^+}{LC\ne 0 },   \ \ 
\cB(L)=\cE^+(L)\cap \cB.
\]
For a cycle $C\in \cE^+$, we consider the following condition:
\begin{itemize}
\item[$(\#)$]
There exist effective cycles $C_1$ and $C_2$ such that
$C=C_1+C_2$,  $C_1\in \cB(L)$, $\supp(C_2) \subset L^{\bot}$, 
and $C_1$ is anti-nef on $C_2$.
\end{itemize} 
We define 
 the set $\cB(L)^e$ by 
\[
\cB(L)^e = \defset{C \in \cE^+(L)}{ \text{ $C$ satisfies $(\#)$ }}.
\]
Then we have $\cB(L) \subset \cB(L)^e$ by the definition. 

\begin{rem}
Suppose we have a decomposition 
\[ C = C_1 + C_2 \quad \text{with } C_1 \in \cB(L),\ \supp(C_2) \subset L^{\bot}. \]
By applying \thmref{t:K} to the cycle $C$, 
or just taking a chain-connected component of $C$, 
we may choose $C_1$ and $C_2$ so that, in addition, 
$C_1$ is anti-nef on $C_2$.
\end{rem}

\begin{ex}
Let us consider the singularity in \exref{e:KYC} with $m=3$.
Assume that $X$ is the minimal resolution and let $L=-E=-(E_1+E_2+E_3)$. Since $E$ is the exceptional part of $\di_X(z)$, $L$ has no fixed components.
We have 
\[
L^{\bot}=E_1 \cup E_2, \quad \cB(L)=\defset{C\in \cB}{C\ge E_3}=\{E_3, E_2+E_3, E\}.
\]
An element of $\cB(L)^e$ is expressed as $E_3+ a_2E_2 +a_1 E_1$ $(a_1, a_2\in \Z_{\ge 0})$.
\end{ex}

The following proposition is a modified version of R{\"o}hr's lemma \cite[Lemma 1.3]{Ro}; by replacing $\cB(L)^e$ with $\cB$,  and $\cE^+(L)$ with $\cE^+$, it becomes the original situation.

\begin{prop}\label{p:Rohr}
Assume that $LD> -2\chi(D)$ for all $D\in \cB(L)^e$.
Then $LD> -2\chi(D)$ holds for all $D\in \cE^{+}(L)$.
\end{prop}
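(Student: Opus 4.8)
The plan is to prove the equivalent statement that $\psi(D) := LD + 2\chi(D) > 0$ for every $D \in \cE^{+}(L)$, using as input only the hypothesis that $\psi > 0$ on $\cB(L)^e$. The single computation I rely on throughout is the identity $\psi(C + C') = \psi(C) + \psi(C') - 2CC'$, which follows at once from \eqref{eq:chi} and the additivity of $L(-)$. In particular, whenever $C$ is anti-nef on $C'$ one has $CC' \le 0$, so $\psi(C + C') \ge \psi(C) + \psi(C')$; the intersection correction $-2CC'$ is always nonnegative in that situation, which is what makes the induction run.

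I would argue by induction on the total multiplicity $|D| = \sum_i a_i$ of $D = \sum_i a_i E_i$. If $D$ is chain-connected, then $D \in \cB \cap \cE^{+}(L) = \cB(L) \subset \cB(L)^e$ and the hypothesis applies directly. Otherwise I take the CCC decomposition $D = m_1 D_1 + \cdots + m_r D_r$ of \thmref{t:K} and peel off the maximal chain-connected component $C_1 := D_1$; conditions (3) and (4) of \thmref{t:K} guarantee that $C_1$ is anti-nef on $D - C_1$, so writing $D = C_1 + D'$ with $D' = D - C_1 > 0$ gives $\psi(D) \ge \psi(C_1) + \psi(D')$. Since $LD \ne 0$ forces $LC_1 \ne 0$ or $LD' \ne 0$, the principal case is $LC_1 \ne 0$ and $LD' \ne 0$: then $C_1 \in \cB(L)$, so $\psi(C_1) > 0$ by hypothesis, while $D' \in \cE^{+}(L)$ with $|D'| < |D|$, so $\psi(D') > 0$ by induction, whence $\psi(D) > 0$.

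Two further situations are clean. If $LD' = 0$ with $\supp(D') \subset L^{\bot}$, then $D = C_1 + D'$ is exactly a decomposition of type $(\#)$, so $D \in \cB(L)^e$ and the hypothesis finishes the case. Likewise, if some connected component of $\supp(D)$ lies entirely in $L^{\bot}$, then by property (2) of \thmref{t:K} it is disjoint from the components on which $L$ is nonzero, so it contributes an orthogonal summand that can be attached to an active chain-connected component to again form an element of $\cB(L)^e$.

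The genuine obstacle, and the step I expect to be the heart of the proof, is the configuration in which an $L$-trivial piece has support \emph{not} contained in $L^{\bot}$; equivalently, $L$ takes both signs on $\supp(D)$ and the $L$-degrees of two or more active chain-connected components cancel. Here neither the inductive hypothesis (which needs the piece to lie in $\cE^{+}(L)$) nor direct membership in $\cB(L)^e$ is available. The resolution I would pursue is a secondary induction on the number of active chain-connected components in the CCC decomposition: peeling the maximal chain-connected component $F_1$ of the $L$-trivial remainder $D'$ and regrouping $D = (C_1 + F_1) + (D' - F_1)$, one checks from \thmref{t:K} that $C_1 + F_1$ is still anti-nef on $D' - F_1$, so the intersection correction stays nonnegative while the number of cancelling active components strictly drops, reducing to a previously handled case. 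Making this termination precise, and bookkeeping the signs so that every regrouping leaves both summands either in $\cE^{+}(L)$ or absorbable into $\cB(L)^e$, is the main difficulty.
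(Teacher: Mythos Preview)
Your proposal has a gap that you yourself flag: the ``genuine obstacle'' where an $L$-trivial piece $D'$ has support not contained in $L^{\bot}$. You propose a secondary induction to handle it but concede that making it terminate is ``the main difficulty,'' and you do not carry it out. The point you are missing is that this obstacle does not occur in the paper's setting. The paper's proof treats $\supp(D_i)\not\subset L^{\bot}$ and $D_i\in\cB(L)$ as synonymous (note the word ``namely''), which amounts to the implication $\supp(D_i)\not\subset L^{\bot}\Rightarrow LD_i\ne 0$; this holds precisely when $LE_j\ge 0$ for every $E_j$, i.e., when $L$ is nef. In every application (\thmref{t:Main}, \sref{s:nr}) the sheaf $\cO_X(L)$ has no fixed components, so $L$ is nef, and under that hypothesis $LD'=0$ forces $\supp(D')\subset L^{\bot}$. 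Your case (b) then covers the entire $LD'=0$ branch and the secondary induction is unnecessary.

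Even granting nef, your argument differs structurally from the paper's. You peel off one chain-connected component and induct on $|D|$; the paper instead processes the whole CCC decomposition $D=m_1D_1+\cdots+m_rD_r$ in a single pass. After reducing to connected support and re-indexing so that $D_i\in\cB(L)$ for $i\le p$ and $\supp(D_i)\subset L^{\bot}$ for $i>p$, the paper sets $D'=D_1+\sum_{i>p}m_iD_i$; then $D'$ satisfies condition~$(\#)$ with $C_1=D_1$, so $D'\in\cB(L)^e$, while $D_2,\dots,D_p\in\cB(L)\subset\cB(L)^e$. The hypothesis applies to each of these pieces, and the CCC conditions make every cross-term nonnegative, giving the strict inequality for $D$ without induction. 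This also sidesteps the awkwardness in your case (c): by packaging every $L^{\bot}$ piece into $D'$ at once, the paper never needs to worry about a maximal chain-connected component landing inside $L^{\bot}$, whereas your inductive peeling has to choose $C_1$ carefully to avoid that.
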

\begin{proof}
Let $\eta(D)=(-1/2)LD$ for $D\in \cE^+$. 
Then the inequality in the claim becomes $\eta(D) < \chi(D)$.
Assume that $D\in \cE^{+}(L)$ and let $D=m_1D_1+\cdots + m_rD_r$ be the CCC decomposition.
By the formula \eqref{eq:chi} for $\chi$, we may assume that $\supp(D)$ is connected.
Then, $D_1 \ge D_i$ for $i > 1$ since $\supp(D_1)=\supp(D)$,  and $\supp (D_1) \not\subset L^{\bot}$.
Moreover, changing the indices if necessary, 
we may assume that $\supp (D_i) \not\subset L^{\bot}$, namely, $D_i \in \cB(L)$ for $i \le p$,  and $\supp (D_i) \subset L^{\bot}$ for $i>p$.
Let  $D'= D_1+m_{p+1}D_{p+1} + \cdots + m_rD_r$.
Then this expression  is also 
a CCC decomposition, namely, the conditions (1)--(4) of \thmref{t:K} are satisfied.
Hence $D' \in \cB(L)^e$.
The conditions of \thmref{t:K} implies that $-D_iD_j\ge 0$ for $i\ne j$, and thus $-D'D_i\ge 0$ for $1\le i \le r$.
By the assumption, we have $\eta(D') < \chi(D')$ and $\eta(D_i) < \chi(D_i)$ for $i \le p$, since $\cB(L) \subset \cB(L)^e$.
Using the formula \eqref{eq:chi},  we obtain 
\begin{align*}
\chi(D) &=\chi(D' + (m_1-1)D_1 + m_{2}D_{2} + \cdots + m_pD_p) \\
&\ge \chi(D') + \chi((m_1-1)D_1) + \chi(m_{2}D_{2}) + \cdots + \chi(m_pD_p) \\
& \ge \chi(D') + (m_1-1) \chi(D_1) + m_2\chi(D_{2}) + \cdots + m_p\chi(D_p) \\
& > \eta (D') + (m_1-1) \eta(D_1) + m_2\eta(D_{2}) + \cdots + m_p\eta(D_p) \\
& = \eta(D' + (m_1-1)D_1 + m_{2}D_{2} + \cdots + m_pD_p)\\
& = \eta(D).
\qedhere
\end{align*}
\end{proof}

\subsection{The cohomology of invertible sheaves without fixed components}
We briefly summarize some results of \cite{Ok2} and \cite{OWY2}  on the cohomology of invertible sheaves on $X$.

\begin{defn}
For a cycle $D\in \cE^+$, let 
\[
p_g(D)=\max\defset{h^1(\cO_C)}{C\in \cE^+, \, \supp(C) \subset \supp(D)}.
\]
We set $p_g(D)=0$ when $D=0$.

\end{defn}

Note that if $D\in \cE^+$ and $\supp(D)\ne \emptyset$ can be contracted to normal surface singularities, then $p_g(D)$ is exactly the sum of the geometric genus of those singularities.

The next lemma follows from the arguments in  \cite[3.6]{Ok2} and \cite[\S 3]{OWY2}.

\begin{lem}
\label{l:nD}
Assume that a divisor $L$ on $X$ has no fixed components.
Then we have the following.
\begin{enumerate}
\item $h^1(\cO_X(sL))\ge h^1(\cO_X((s+1)L))$ for $s\in \Z_{\ge 0}$.
 Let 
\[
s_0=\min\defset{s\in \Z_{\ge 0}}{h^1(\cO_X(sL))= h^1(\cO_X(tL)) \; \text{for } t \ge s}.
\]

\item If $\cO_X(L)$ is generated $($by global sections$)$, then 
\[
s_0=\min\defset{s\in \Z_{\ge 0}}{h^1(\cO_X(sL))= h^1(\cO_X((s+1)L))}
\le p_g(A).
\]
\item $h^1(\cO_X(sL))=p_g(L^{\bot})$ for $s \ge s_0$.
\item $\cO_X(sL)$ is generated for $s \ge s_0 + 1$.
\end{enumerate}
\end{lem}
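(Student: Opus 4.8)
The plan is to base everything on multiplication by one sufficiently general global section. Since $\cO_X(L)$ has no fixed components and $k$ is infinite, I would first fix a general $h\in H^0(\cO_X(L))$ so that $D:=\di_X(h)+L$ is effective and contains no component of $E$. This has two immediate consequences: $LE_i=DE_i\ge 0$ for every $i$, so $L$ is nef; and $\pi|_D\: D\to \spec(A)$ is proper with finite fibres (as $D$ shares no component with $E$), hence finite, so that $H^1(\cO_D(M))=0$ for every line bundle $M$ on $X$ (finite morphisms have no higher direct images). Note also $\cO_X(sL)\cong \cO_X(sD)$ for all $s$.

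For $(1)$ I would use the exact sequence given by multiplication by $h$,
\[
0\to \cO_X(sL)\xrightarrow{\ h\ }\cO_X((s+1)L)\to \cO_D((s+1)L)\to 0 .
\]
Because $H^1(\cO_D((s+1)L))=0$ and $H^2(\cO_X(sL))=0$ (the fibres of $\pi$ are one-dimensional), the long exact sequence shows that $H^1(\cO_X(sL))\to H^1(\cO_X((s+1)L))$ is surjective; this yields the monotonicity and the well-definedness of $s_0$. For the lower bound $h^1(\cO_X(sL))\ge p_g(L^{\bot})$ I would take $W>0$ with $\supp(W)\subset \supp(L^{\bot})$; then $DW=LW=0$ with $D$ effective and horizontal forces $\supp(D)\cap\supp(W)=\emptyset$, hence $\cO_X(sL)|_W\cong \cO_W$, and $0\to \cO_X(sL-W)\to \cO_X(sL)\to \cO_W(sL)\to 0$ gives $H^1(\cO_X(sL))\twoheadrightarrow H^1(\cO_W)$. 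Maximising over $W$ gives the bound.

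The crux is $(3)$: that the stable value equals $p_g(L^{\bot})$, i.e. $h^1(\cO_X(sL))\le p_g(L^{\bot})$ for $s\gg 0$. Here I would contract $L^{\bot}$ by the morphism $f\: X\to Y$ blowing down exactly the curves with $LE_i=0$; then $f_*L$ is relatively ample over $\spec(A)$, $R^1f_*\cO_X$ is supported on the contracted points with total length $p_g(L^{\bot})$, and since $sL$ is numerically trivial on the fibres of $f$, $R^1f_*\cO_X(sL)$ is again supported on those points with total length $p_g(L^{\bot})$. The Leray sequence gives $h^1(\cO_X(sL))=h^1(Y,f_*\cO_X(sL))+h^0(R^1f_*\cO_X(sL))$, where the second term is $p_g(L^{\bot})$ and the first vanishes for $s\gg0$ since $f_*\cO_X(sL)$ becomes positive enough over the affine base $\spec(A)$ (relative Serre vanishing). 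Closer to the paper's theme, one can instead run a computation sequence for a large cycle $C$ adding the components with $LE_i>0$ with degree $s(LE_i)\gg0$ — forcing the associated $H^1$-contributions to vanish by the R{\"o}hr inequality $sLE_i>-2\chi$ — while the steps supported on $L^{\bot}$ contribute at most $p_g(L^{\bot})$. I expect this identification of the residual contribution to be the main obstacle, as it is exactly where the geometry of $L^{\bot}$ enters.

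Finally, for $(4)$ I would use the case of equality in $(1)$: when $s\ge s_0$, the surjection above is an isomorphism, so the connecting map vanishes and $H^0(\cO_X((s+1)L))\to H^0(\cO_D((s+1)L))$ is surjective. Off $D$ the section $h^{s+1}$ already generates $\cO_X((s+1)L)$, while at the finitely many points of $D\cap E$ one lifts generators of $\cO_D((s+1)L)$; hence $\cO_X(mL)$ is generated for $m\ge s_0+1$. For $(2)$, if $\cO_X(L)$ is generated then so is every $\cO_X(sL)$, and one checks that $h^1(\cO_X(sL))-h^1(\cO_X((s+1)L))$ cannot vanish while $h^1(\cO_X(sL))>p_g(L^{\bot})$; thus the first equality $h^1(\cO_X(sL))=h^1(\cO_X((s+1)L))$ already occurs at $s_0$. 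The bound $s_0\le p_g(A)$ then follows by counting: $h^1(\cO_X)=p_g(A)$, each step before $s_0$ drops $h^1$ by at least one, and the process terminates at $p_g(L^{\bot})\ge 0$, so $s_0\le p_g(A)-p_g(L^{\bot})\le p_g(A)$.
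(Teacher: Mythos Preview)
Your overall strategy---multiplication by a general section $h$ with divisor $D$ transverse to $E$, plus the Leray decomposition for the contraction $f\colon X\to Y$ of $L^{\bot}$---is exactly the standard route (and what the paper implicitly invokes by citing \cite{Ok2} and \cite{OWY2}); the paper itself uses the same Leray identity in \lemref{l:chif+}(2). Parts (1), (4), and the bound $s_0\le p_g(A)$ are handled correctly. Two points deserve tightening.

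First, in your argument for (3) you justify $h^0(R^1f_*\cO_X(sL))=p_g(L^{\bot})$ by saying ``$sL$ is numerically trivial on the fibres of $f$''. Numerical triviality is not enough: a numerically trivial but nontrivial line bundle on an elliptic component of $L^{\bot}$ would give the wrong $R^1f_*$. What you actually need---and what you already set up---is that $\cO_X(sL)\cong\cO_X$ in a neighborhood of $L^{\bot}$, because your section $h^s$ has divisor $sD$ disjoint from $L^{\bot}$ (this is precisely the paper's \eqref{eq:trivialW} and the reason \lemref{l:chif+}(2) works). State it that way.

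Second, and more substantively, your proof of (2) hides the key step behind ``one checks''. The assertion that two consecutive equal values force $h^1(\cO_X(sL))=p_g(L^{\bot})$ amounts to \emph{stability propagation}: if $h^1(\cO_X(sL))=h^1(\cO_X((s+1)L))$ then $h^1(\cO_X((s+1)L))=h^1(\cO_X((s+2)L))$. This is where the hypothesis ``$\cO_X(L)$ generated'' is actually used, and it needs an argument: from the equality at step $s$ you get surjectivity of $H^0(\cO_X((s+1)L))\to H^0(\cO_D((s+1)L))$; since $D$ is affine and $\cO_D(L)$ is generated by restrictions of global sections of $\cO_X(L)$, every element of $H^0(\cO_D((s+2)L))$ is a sum $\sum \sigma_j\cdot(h_{i_j}|_D)$ with $\sigma_j\in H^0(\cO_D((s+1)L))$, and lifting the $\sigma_j$ gives surjectivity at step $s{+}1$. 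Without this, (2) is unproved; once you insert it, your derivation of $s_0\le p_g(A)-p_g(L^{\bot})$ goes through.
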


\section{A vanishing theorem on a partial resolution and its applications}\label{s:vanish}

The aim of this section is to prove a theorem that implies \thmref{t:Main0}.
We use the notation introduced in the preceding section.

Let us fix a divisor $L$ on $X$ that is not numerically trivial,
 and assume that $\cO_X(L)$ has no fixed components.
In particular, $L$ is nef and a general element of $H^0(\cO_X(L))$ induces an isomorphism $\cO_X\cong \cO_X(L)$ on a neighborhood of $L^{\bot}$ (cf. \eqref{eq:trivialW}).
By \lemref{l:nD} (4), there exists $n_0\in \Z_{\ge 0}$ such that $\cO_X(nL)$ is generated for every $n> n_0$.
Let $Y=\proj \bigoplus_{n\ge 0}\pi_*\cO_X(nL)$,
 and let $f\: X\to Y$ be the natural morphism.
Then $Y$ is normal, and the exceptional set of $f$ coincides with $L^{\bot}$.
Since $\cO_X(L)\cong \cO_X$ on a neighborhood of $L^{\bot}$ and the local equations of $L$ near $L^{\bot}$ are also those of $f_*L$ near $f(L^{\bot})$, we see that  $f_*L$ is a Cartier divisor on $Y$ and $\cO_Y(f_*L)$ has no base points in  $f(L^{\bot})$.


We also call a divisor $D$ on $Y$ a {\em cycle} if $\supp (D) \subset f(E)$.
Similar to the treatment of cycles on $X$, we regard a cycle $D$ on $Y$ as a scheme with structure sheaf $\cO_D := \cO_Y/\cO_Y(-D)$. We write $\chi(D)=\chi(\cO_D)$.

In general, a cycle $D$ on $Y$ is not a Cartier divisor (nor even $\Q$-Cartier), so we cannot define $f^*D$ in the usual way. 

\begin{defn}
Let $W>0$ be a cycle on $Y$.
We denote by $f_*^{-1}W$ the proper transform of $W$ on $X$.
Then $f_*\cO_X(-f_*^{-1}W)=\cO_Y(-W)$ by the normality of $Y$.
Let $F$ denote the $f$-fixed part of $\cO_X(-f_*^{-1}W)$, namely, $F$ is the maximal cycle such that 
\[
f_*\cO_X(-f_*^{-1}W-F)=f_*\cO_X(-f_*^{-1}W).
\]
Then we define a cycle $f^+W$ on $X$ by 
\[
f^{+}W=f_*^{-1}W+F.
\]
Note that $f^+W$ is anti-nef on $L^{\bot}$ and that $\supp(F)$ is a union of some connected components of $L^{\bot}$.
\end{defn}

For any coherent sheaf $\cF$ on $X$, the Leray spectral sequence implies the equality
\begin{equation}\label{eq:sp}
h^1(X, \cF)=h^1(Y, f_*\cF)+h^0(Y, R^1f_*\cF).
\end{equation}

\begin{lem}\label{l:chif+}
We have the following.
\begin{enumerate}
\item $f^*f_*L=L$.
\item $h^1(\cO_X(L))=h^1(\cO_Y(f_*L))+p_g(L^{\bot})$.
\item For a cycle $W>0$ on $Y$, we have
\[
\chi(W)-\chi(f^+W)=p_g(L^{\bot})-h^0(R^1f_*\cO_X(-f^+W))\ge 0.
\]
In particular, $\chi(W)=\chi(f^+W)$ if $p_g(L^{\bot})=0$.
\end{enumerate}
\end{lem}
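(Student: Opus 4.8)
The plan is to prove the three parts in order, since each builds on the previous one. For part (1), the divisor $f_*L$ is Cartier on $Y$ by the discussion preceding the lemma, so $f^*f_*L$ is a well-defined divisor on $X$. I would argue that $f^*f_*L$ and $L$ agree away from the exceptional set $L^{\bot}$ of $f$, so they differ by a cycle supported on $L^{\bot}$; then I would show this cycle is zero. The key point is that $\cO_X(L)\cong\cO_X$ on a neighborhood of $L^{\bot}$ (stated just before the lemma via \eqref{eq:trivialW}), and the local equations of $L$ near $L^{\bot}$ descend to local equations of $f_*L$ near $f(L^{\bot})$. Pulling these back shows $f^*f_*L$ has the same local equations as $L$ near $L^{\bot}$, forcing equality.

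For part (2), I would apply the Leray spectral sequence identity \eqref{eq:sp} to the sheaf $\cF=\cO_X(L)$, giving
\[
h^1(\cO_X(L))=h^1(Y,f_*\cO_X(L))+h^0(Y,R^1f_*\cO_X(L)).
\]
By part (1) and the projection formula, $f_*\cO_X(L)=f_*\cO_X(f^*f_*L)=\cO_Y(f_*L)$, which handles the first term. For the second term, I would use that $\cO_X(L)\cong\cO_X$ near $L^{\bot}$ to identify $R^1f_*\cO_X(L)\cong R^1f_*\cO_X$; since this sheaf is supported on the finite set $f(L^{\bot})$, its $h^0$ equals the stalk sum, which by the formal function theorem (or the very definition of $p_g(D)$ as a maximum of $h^1(\cO_C)$) computes precisely $p_g(L^{\bot})$. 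Assembling these gives the claimed formula.

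For part (3), I would again invoke \eqref{eq:sp}, this time with $\cF=\cO_X(-f^+W)$. By the definition of $f^+W$ we have $f_*\cO_X(-f^+W)=f_*\cO_X(-f_*^{-1}W)=\cO_Y(-W)$, using the normality of $Y$ and the construction of the fixed part $F$. The plan is to compare the two short exact sequences $0\to\cO_X(-f^+W)\to\cO_X\to\cO_{f^+W}\to0$ on $X$ and $0\to\cO_Y(-W)\to\cO_Y\to\cO_W\to0$ on $Y$, take Euler characteristics (using $h^0(\cO_X)=h^0(\cO_Y)=1$ and matching the $H^0$ of the ideal sheaves, which both vanish since the cycles are effective and supported on the exceptional locus), and read off
\[
\chi(f^+W)-\chi(W)=\bigl(h^0(R^1f_*\cO_X)-h^0(R^1f_*\cO_X(-f^+W))\bigr).
\]
Then part (2)'s identification $h^0(R^1f_*\cO_X)=p_g(L^{\bot})$ converts the first term, yielding $\chi(W)-\chi(f^+W)=p_g(L^{\bot})-h^0(R^1f_*\cO_X(-f^+W))$. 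Nonnegativity follows because $R^1f_*\cO_X(-f^+W)$ is a quotient of $R^1f_*\cO_X$ (the natural surjection coming from $-f^+W$ being anti-nef on $L^{\bot}$, so the relevant map on $H^1$ of the exceptional fibers is surjective), bounding its $h^0$ above by $p_g(L^{\bot})$.

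The main obstacle will be the careful bookkeeping in part (3): one must correctly relate $h^0(R^1f_*\cO_X(-f^+W))$ to $h^0(R^1f_*\cO_X)=p_g(L^{\bot})$ and justify that the comparison map is a surjection, which is exactly where the anti-nef property of $f^+W$ on $L^{\bot}$ (noted in the definition of $f^+W$) and the maximality of the fixed part $F$ enter. The cohomology-and-base-change or formal-function-theorem argument identifying $R^1f_*$ sheaves with fiberwise $h^1$ data, together with ensuring the connecting maps behave correctly, is the delicate step; the rest is a routine diagram chase once the identifications are in place.
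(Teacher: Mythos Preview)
Your overall strategy matches the paper's proof closely: both use the Leray identity \eqref{eq:sp} and compare the exact sequences on $X$ and $Y$ for part (3). Two points deserve comment.

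First, a concrete error in part (3): your claim that $h^0(\cO_X)=h^0(\cO_Y)=1$ and that $H^0$ of the ideal sheaves vanish is false in this setting. Here $X\to\spec(A)$ is a resolution of a \emph{local} ring, so $H^0(X,\cO_X)=A$ is infinite-dimensional over $k$, and $H^0(\cO_X(-f^+W))$ is a nonzero ideal of $A$. The paper handles this correctly by working with the finite-dimensional quotient $\dim_k H^0(\cO_X)/H^0(\cO_X(-f^+W))$, and then uses $f_*\cO_X=\cO_Y$ and $f_*\cO_X(-f^+W)=\cO_Y(-W)$ to identify this quotient with $\dim_k H^0(\cO_Y)/H^0(\cO_Y(-W))$. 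Once you make this correction, your comparison yields exactly the paper's formula.

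Second, two places where the paper argues differently. For (1), rather than tracking local equations, the paper sets $F=f^*f_*L-L$, notes $\supp(F)\subset L^{\bot}$ is $f$-exceptional, and computes $0\ge F^2=-LF=0$ (using $(f^*f_*L)F=0$), forcing $F=0$ by negative definiteness; this is quicker. For the nonnegativity in (3), your sketch invokes a surjection $R^1f_*\cO_X\to R^1f_*\cO_X(-f^+W)$ ``coming from $-f^+W$ being anti-nef on $L^{\bot}$'', but anti-nefness alone does not produce a map in that direction; what you need is a local section of $\cO_X(-f^+W)$ nonvanishing on each $f$-exceptional component, i.e., the no-$f$-fixed-component property built into the definition of $f^+W$. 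The paper packages exactly this as an application of \lemref{l:nD}\,(1) with $s=0$ at each point of $f(L^{\bot})$, which directly gives $p_g(L^{\bot})\ge h^0(R^1f_*\cO_X(-f^+W))$.
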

\begin{proof}
(1) Let $F= f^*f_*L-L$. Then $\supp(F)$ is $f$-exceptional and $0 \ge F^2=(f^*f_*L-L)F=-LF=0$.  Hence $F=0$.

(2) It follows from \eqref{eq:sp} because we have $h^0(R^1f_*\cO_X(L))=p_g(L^{\bot})$ since $\cO_X\cong \cO_X(L)$ on a neighborhood of the $f$-exceptional set $L^{\bot}$.

(3) 
Let $W'=f^+W$.
From the exact sequence 
\[
0 \to \cO_X(-W') \to \cO_X \to \cO_{W'} \to 0,
\]
we have 
\[
\chi(W')  = \dim_k H^0(\cO_X)/H^0(\cO_X(-W')) + h^1(\cO_X(-W'))-h^1(\cO_X).
\]
We also have the similar expression of $\chi(W)$.
Since $f_*\cO_X(-W')=\cO_Y(-W)$ by the definition of $f^+$ and $f_*\cO_X=\cO_Y$, it follows from the equality \eqref{eq:sp} that 
\begin{gather*}
h^1(\cO_X(-W'))=h^1(\cO_Y(-W))+h^0(R^1f_*\cO_X(-W')), \\
h^1(\cO_X)=h^1(\cO_Y) + h^0(R^1f_*\cO_X).
\end{gather*}
Since $h^0(R^1f_*\cO_X)=p_g(L^{\bot})$, we obtain 
\begin{align*}
\chi(W') 
& = \dim_k H^0(\cO_Y)/H^0(\cO_Y(-W)) + h^1(\cO_Y(-W))-h^1(\cO_Y) \\
 & \quad \quad 
+ h^0(R^1f_*\cO_X(-W')) - h^0(R^1f_*\cO_X) \\
& = \chi(W) +  h^0(R^1f_*\cO_X(-W')) - p_g(L^{\bot}).
\end{align*}
Applying \lemref{l:nD} (1) with $s=0$ and $L=-W'$ to each singular point of $Y$, i.e., each point of $f(L^{\bot})$, we obtain that $p_g(L^{\bot}) \ge h^0(R^1f_*\cO_X(-W'))$.
\end{proof}

We use the terminology and results from \cite{CFHR}.
Let $W$ be a cycle on $Y$.
Then $\cO_W$ is torsion free since $\cO_Y(-W)$ is reflexive.
Let $\omega_W$ denote the dualizing sheaf of $W$.
The {\em degree} of a torsion free sheaf $\cL$ on $W$ is defined to be
$\deg \cL = \chi(\cL)-\chi(W)$.
For a Cartier divisor $D$ on $Y$, let $\cO_W(D)=\cO_Y(D)\otimes \cO_W$ and 
 $DW:=\deg \cO_W(D)$.

\begin{lem}\label{l:nv}
Let $D$ be a Cartier divisor on $Y$ and let $W>0$ be a cycle on $Y$ such that $H^1(\cO_W(D))\ne 0$.
Assume that $W$ is minimal among all cycles on $Y$ with this property.
Then $DW \le -2\chi(W)$.
\end{lem}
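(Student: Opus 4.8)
The plan is to pass to the Serre dual and compare degrees on the one-dimensional scheme $W$. Since $\cO_Y(-W)$ is reflexive, $\cO_W$ is torsion-free, hence $W$ is Cohen--Macaulay of pure dimension one and Serre duality in the form used in \cite{CFHR} applies. Recall that $\cO_W(D)$ is invertible because $D$ is Cartier, and that by Serre duality $\chi(\omega_W)=-\chi(W)$, whence $\deg\omega_W=\chi(\omega_W)-\chi(W)=-2\chi(W)$. The hypothesis $H^1(\cO_W(D))\ne 0$ is, by Serre duality, equivalent to $\Hom_{\cO_W}(\cO_W(D),\omega_W)\ne 0$, so I would fix a nonzero homomorphism $\phi\colon \cO_W(D)\to \omega_W$. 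If I can show that $\phi$ is injective, then at the generic point of each component of $W$ it is a map between modules of equal finite length over the corresponding Artinian local ring, hence an isomorphism there; thus $\Coker\phi$ has finite length, and from $0\to\cO_W(D)\to\omega_W\to\Coker\phi\to 0$ one gets $\deg\cO_W(D)=\deg\omega_W-\dim_k\Coker\phi\le \deg\omega_W=-2\chi(W)$, which is exactly $DW\le -2\chi(W)$.

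The whole proof thus reduces to showing that $\phi$ is injective, and this is where the minimality of $W$ enters; I expect this to be the main obstacle. The image $\Image\phi\subseteq\omega_W$ is torsion-free, so $\Ker\phi$ is a saturated subsheaf of the invertible sheaf $\cO_W(D)$. I would argue that, after twisting by the invertible sheaf $\cO_W(-D)$, such a saturated subsheaf is the ideal sheaf of a subcurve, so that there is a subcycle $W'\le W$ with $\Image\phi\cong\cO_{W'}(D)$; here $\Ker\phi\ne 0$ would force $W'<W$, while $\phi\ne 0$ forces $W'>0$. Since $\Image\phi$ is an $\cO_{W'}$-module, it is annihilated by the ideal of $W'$ in $W$, and therefore lands inside $\omega_{W'}=\mathcal{H}om_{\cO_W}(\cO_{W'},\omega_W)$, viewed as the subsheaf of $\omega_W$ killed by that ideal (the adjunction for dualizing sheaves of subcurves from \cite{CFHR}).

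Consequently a nonzero $\phi$ with nonzero kernel would produce a nonzero map $\cO_{W'}(D)\to\omega_{W'}$, that is $\Hom_{\cO_{W'}}(\cO_{W'}(D),\omega_{W'})\ne 0$, and by Serre duality on $W'$ this means $H^1(\cO_{W'}(D))\ne 0$ for a cycle $0<W'<W$ --- contradicting the minimality of $W$. Hence $\Ker\phi=0$, and the degree comparison of the first paragraph finishes the argument. The delicate points, which I would handle with the curve-theoretic results of \cite{CFHR}, are the identification of a saturated subsheaf of $\cO_W(D)$ with a twist of the ideal sheaf of a subcurve and the precise realization of $\omega_{W'}$ as a subsheaf of $\omega_W$; everything else is bookkeeping with $\chi$ and $\deg$.
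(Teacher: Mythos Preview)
Your proposal is correct and follows essentially the same route as the paper: both use Serre duality to obtain a nonzero $\phi\colon\cO_W(D)\to\omega_W$, invoke the subcurve machinery of \cite{CFHR} to produce a subcycle $W'\le W$ on which $\phi$ induces a nonzero map into $\omega_{W'}$, use minimality to force $W'=W$, and then read off the inequality from the resulting short exact sequence and $\chi(\omega_W)=-\chi(W)$. The only cosmetic difference is that you define $W'$ via $\Ker\phi$ while the paper uses $\ann\phi$; since $\cO_W(D)$ is invertible these agree, and the paper's observation that $\omega_W$ is invertible at general points (the singular locus of $Y$ is finite) is exactly your length argument at the generic points.
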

\begin{proof}
By duality, we have $\Hom (\cO_W(D), \omega_W)\ne 0$. 
Let $0\ne \phi \in \Hom (\cO_W(D), \omega_W)$.
Since $\cO_W(D)$ is invertible, 
it follows from Lemma 2.4 and Remark 2.5 of \cite{CFHR} that 
if $W'\subset W$ is a subscheme defined by the ideal $\ann \phi \subset \cO_W$, then $W'$ is a cycle and $\phi$ factors as $\cO_W(D) \to \cO_{W'}(D) \xrightarrow{\; \phi'} \omega_{W'} \hookrightarrow \omega_W$,
where $\phi'$ is generically surjective.
However, by duality, the existence of such $\phi'$ implies that $H^1(\cO_{W'}(D))\ne 0$.  By the minimality of $W$, we obtain that $W=W'$.
Therefore, the morphism $\phi\: \cO_W(D) \to \omega_W$ is generically surjective and, since $\omega_W$ is invertible at general points, also generically injective.
Since $\cO_W(D)$ is torsion free, $\phi$ is in fact injective.
Hence, we have the exact sequence
\[
0 \to \cO_W(D) \xrightarrow{\phi} \omega_W \to \Coker \phi \to 0,
\]
and that $\supp(\Coker \phi)$ is a finite set.
Thus we obtain that 
\[
\chi(\omega_W) - \chi(\cO_W(D)) = h^0(\Coker \phi) \ge 0.
\]
Since $DW = \chi(\cO_W(D)) - \chi(W)$ and $\chi(\omega_W) = - \chi(W)$ by duality, we have 
\[
DW = \chi(\cO_W(D)) - \chi(W) \le \chi(\omega_W)-\chi(W) = -2 \chi(W).
\qedhere
\]
\end{proof}

Let $\cB(L)^e$ be as in \sref{s:R}.
The main result in this section is the following.

\begin{thm}\label{t:Main}
Assume that $\cO_X(L)$ has no fixed components and that 
 $LC> -2\chi(C)$ for all $C\in \cB(L)^e$.
Then $H^1(\cO_Y(f_*L))=0$ and $h^1(\cO_X(L))=p_g(L^{\bot})$.
\end{thm}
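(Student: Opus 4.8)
The plan is to prove the two assertions in order: once we know $H^1(\cO_Y(f_*L))=0$, the equality $h^1(\cO_X(L))=p_g(L^{\bot})$ is immediate from \lemref{l:chif+}(2). So the whole problem reduces to the vanishing $H^1(\cO_Y(f_*L))=0$ on the normal surface $Y$. Since the morphism $Y\to \spec(A)$ is proper and contracts $f(E)$ to the closed point, the theorem on formal functions tells us that $H^1(\cO_Y(f_*L))=0$ is equivalent to $H^1(\cO_W(f_*L))=0$ for every cycle $W>0$ on $Y$ (exactly as $p_g(A)=\max_C h^1(\cO_C)$ on $X$). I would argue by contradiction: suppose some cycle has nonzero $H^1$, and choose $W>0$ minimal among all cycles on $Y$ with $H^1(\cO_W(f_*L))\ne 0$.

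The next step is to apply \lemref{l:nv} with the Cartier divisor $D=f_*L$, which yields $(f_*L)W\le -2\chi(W)$. I then transport this inequality from $Y$ to $X$ through the pull-back $f^+$. Writing $f^+W=f_*^{-1}W+F$ with $F$ supported on $L^{\bot}$, the components of $F$ are $f$-exceptional and the strict transform pushes forward to $W$, so $f_*(f^+W)=W$; combining the projection formula with $f^*f_*L=L$ from \lemref{l:chif+}(1) gives $(f_*L)W=L(f^+W)$. Feeding in $\chi(W)\ge \chi(f^+W)$ from \lemref{l:chif+}(3), I obtain
\[
L(f^+W)=(f_*L)W\le -2\chi(W)\le -2\chi(f^+W).
\]

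Finally I would check that $f^+W$ is an admissible test cycle. Its proper-transform part $f_*^{-1}W>0$ is supported on components $E_i$ not contracted by $f$, i.e.\ with $LE_i>0$, while $LF=0$ because $\supp(F)\subset L^{\bot}$; hence $L(f^+W)=L\,f_*^{-1}W>0$ and so $f^+W\in \cE^{+}(L)$. Now the theorem's hypothesis $LC>-2\chi(C)$ for all $C\in \cB(L)^e$, together with \proref{p:Rohr}, gives $LD>-2\chi(D)$ for every $D\in \cE^{+}(L)$; applying this to $D=f^+W$ contradicts the displayed inequality. Therefore no such $W$ exists, $H^1(\cO_Y(f_*L))=0$, and the theorem follows.

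I expect the crux to be the transport step of the middle paragraph: the cycle $W$ on $Y$ is genuinely non-Cartier, so the inequality produced by \lemref{l:nv} lives on $Y$ and must be converted into an inequality about the honest divisor $L$ on $X$ in a form that the modified R\"ohr lemma can contradict. The delicate feature is that the three parts of \lemref{l:chif+} interlock precisely here --- part (1) to identify the intersection numbers and part (3) to compare Euler characteristics in the favorable direction --- and that $f^+W$ need only be shown to lie in $\cE^{+}(L)$ rather than in $\cB(L)^e$, which is exactly the role played by the extension in \proref{p:Rohr}.
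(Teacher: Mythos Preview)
Your proof is correct and follows essentially the same route as the paper's: reduce to $H^1(\cO_Y(f_*L))=0$ via \lemref{l:chif+}(2), use \lemref{l:nv} on a minimal offending cycle $W$, transport the resulting inequality to $X$ through $f^+W$ using \lemref{l:chif+}(1),(3), and contradict \proref{p:Rohr} after noting $f^+W\in\cE^+(L)$. The only cosmetic difference is that the paper establishes $(f_*L)W=Lf^+W$ by choosing a general section of $\cO_W(f_*L)$ and counting the cokernel (supported in the smooth locus of $Y$), whereas you invoke the projection formula together with $f_*(f^+W)=W$; both justifications are valid and lead to the identical chain $(f_*L)W=Lf^+W>-2\chi(f^+W)\ge -2\chi(W)$.
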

\begin{proof}
By \lemref{l:chif+} (2), it suffices to prove that $H^1(\cO_Y(f_*L))=0$.
Recall that $f_*L$ is a Cartier divisor and $\cO_Y(f_*L)$ has no base points in $f(L^{\bot})$.
Let $W>0$ be an arbitrary cycle on $Y$.
Since $f^{+}W \in \cE^+(L)$ by the definition of $f\: X\to Y$, it follows from  \proref{p:Rohr} that $Lf^{+}W>-2\chi(f^{+}W)$. 
Note that if $0\ne \sigma\in H^0(\cO_W(f_*L))$ is a general section, 
then the morphism $\sigma: \cO_W \to \cO_W(f_*L)$ is injective and $\supp(\Coker \sigma)$ is a finite subset of $f(E)\setminus f(L^{\bot})$.
Hence we have $(f_*L)W=h^0(\Coker \sigma)=(f^*f_*L)f^{-1}_*W$.
Using \lemref{l:chif+}, we obtain
\[
(f_*L)W=(f^*f_*L)f_*^{-1}W=(f^*f_*L)f^{+}W
=Lf^{+}W>-2\chi(f^{+}W) \ge -2\chi(W).
\]
Therefore, by \lemref{l:nv}, $H^1(\cO_W(f_*L))=0$ for an arbitrary cycle $W>0$ on $Y$.
Hence we have $H^1(\cO_Y(f_*L))=0$.
\end{proof}

\begin{rem}\label{r:chiL}
 The condition 
\begin{equation}\label{eq:condition}
\text{ $LC> -2\chi(C)$ for all $C\in \cB(L)^e$ }
\end{equation}
in \thmref{t:Main} may be replaced by conditions that use $\cB(L)$ instead of $\cB(L)^e$, as follows.
Let 
\[
\chi_L=\min\defset{\chi(D)}{D\in \cE^+, \;\supp (D) \subset L^{\bot}}
\]
 and suppose  $C\in \cB(L)^e$.  
By the definition of $\cB(L)^e$, $C$ can be expressed as $C=C_1+C_2$, where $C_1\in \cB(L)$, $\supp(C_2)\subset L^{\bot}$, and $C_1C_2\le 0$.
We have $LC=LC_1$, and by \eqref{eq:chi} and \eqref{eq: h1B},
\[
\chi(C) \ge \chi(C_1)+\chi(C_2) \ge \chi(C_1)+\chi_{L} \ge \chi(Z_f)+\chi_L,
\] 
where $Z_f$ is the fundamental cycle.
Therefore, the condition \eqref{eq:condition} can be replaced by 
\begin{enumerate}
\item $LC> -2(\chi(C)+\chi_L)$ for all $C\in \cB(L)$, or 
\item $LC> -2(\chi(Z_f)+\chi_L)$ for all $C\in \cB(L)$.
\end{enumerate}
If $L^{\bot}$ contracts to rational or elliptic singularities, 
then $\chi_L\ge 0$,  and thus $\chi_L$ can be omitted in the conditions (1) and (2) above.
\end{rem}

The next example shows that the assumption in \thmref{t:Main} requiring $\cO_X(L)$ to have no fixed components is indispensable. 

\begin{ex}
Let us recall an example in \cite[6.3]{Ok1}.
Let $(S,o)\subset (\C^4,o)$ be a normal complex surface singularity defined by the $2\times 2$ minors of the matrix
 \[
\begin{pmatrix}
x&y&z \\ y-3w^2 & z+w^3 & x^2+6wy-2w^3
\end{pmatrix}.
\]
This is an elliptic singularity with $p_g(S,o)=1$.
Assume that $X\to S$ is the minimal resolution with exceptional set $E$.
Then $E$ consists of a rational curve $E_1$ with $(2,3)$-cusp and a nonsingular rational curve $E_2$ such that $E_1^2=-1$, $E_2^2=-2$, and $E_1E_2=1$.
In fact, the resolution graph of $(S,o)$ coincides with that of the hypersurface singularity $(\{x^2+y^3+z^{13}=0\}, o) \subset (\C^3,o)$. 
We have $Z_f=E$, and the maximal ideal cycle (e.g., \cite[\S 3]{Ok1}) is $Z_f+E_1$,  because $(S,o)$ is of type II in \cite[3.11]{Ok1} with $\alpha=\beta=1$.
Hence, the fixed part of $\cO_X(-Z_f)$ is just $E_1$.
Let $L=-nZ_f$ for $n\in \Z_{>0}$.
Clearly, $L^{\bot}=Z_f^{\bot}=E_1$, and $p_g(L^{\bot})=1$ since $E_1$ contracts to a minimally elliptic singularity (\cite{la.me}).
Every $C\in \cB(L)^e$ can be expressed as $C=mE_1 + E_2$ ($m\in \Z_{\ge 0}$).
Thus, $\chi(C)=\chi(mE_1)+\chi(E_2)-mE_1E_2 = (m-1)(m-2)/2 \ge 0$, and the condition \eqref{eq:condition} is satisfied.
We will show that $h^1(\cO_X(L))=0$ for every $n\in \Z$, $n\ne 0$; that is, $h^1(\cO_X(L)) \ne p_g(L^{\bot})$ for $L=-nZ_f$.
We have $\cO_{E_1}(-Z_f)\not \cong \cO_{E_1}$ by \cite[2.13, 2.15]{Ok1}.
Since $\pic(E_1)$ has no torsion elements (see the paragraph preceding \cite[3.7]{Ok1}), it follows that $\cO_{E_1}(-n Z_f)\not \cong \cO_{E_1}$ for every $n\in \Z$, $n\ne 0$.
However, since $\chi(\cO_{E_1}(-nZ_f))=\chi(\cO_{E_1})=0$, we have $h^0(\cO_{E_1}(-nZ_f))=h^1(\cO_{E_1}(-nZ_f))$.
It is well-known that a numerically trivial line bundle $\cL$  on a reduced cycle is analytically trivial if $H^0(\cL)\ne 0$.
Hence $h^1(\cO_X(-nZ_f))=0$ for every $n\in \Z$, $n\ne 0$.
\end{ex}

\section{An upper bound of the normal reduction number}\label{s:nr}

We apply the results from \sref{s:vanish} to study the normal reduction number of integrally closed $\m$-primary ideals of the surface singularity $A$.  
We follow the notation introduced in the preceding sections.

Let $I$ be an $\m$-primary ideal of $A$.  
The integral closure $\bar{I} \subset A$ of $I$ is the ideal consisting of all solutions $z$ of an equation of the form
 $z ^n +c_1 z^{n-1} +c_2 z^{n-2} + \dots+ c_{n-1} z + c_n=0$ with coefficients $c_i \in I^i $. 
Then $ I \subseteq \bar{I} \subseteq \m$. 
We say that $I$ is \textit{integrally closed} if $I = \bar{I}$.

\begin{defn}[{\cite{OWY4}}]
Let $I \subset A$ be an integrally closed $\m$-primary ideal, 
and let $Q$ be a minimal reduction of $I$. 
The {\em normal reduction number }$\br(I)$ of $I$ is defined by
\[
\br(I) = \min\{ r \in \mathbb{Z}_{>0} \;|\; \overline{I^{n+1}} = Q \overline{I^n} \; \mbox{\rm for all } n \ge r\}.
\]
This is independent of the choice of $Q$.
The normal reduction number $\br(A)$  of the singularity $A$ is defined by 
\[
\br(A)=\max\defset{\br(I)}{\text{ $I$ is an integrally closed $\m$-primary  ideal of $A$}}.
\]
\end{defn}

\begin{prop}[{\cite{Ok2}, \cite{OWY5}}]
\label{p:reh}
We have  the following.
\begin{enumerate}
\item $A$ is rational if and only if $\br(A)=1$.
\item If $A$ is elliptic, then $\br(A)=2$.
\item If $A=k[[x,y,z]]/(f)$, where $f$ is a homogeneous polynomial of degree $d$, then $\br(A)=\br(\m)=d-1$.
\end{enumerate}
\end{prop}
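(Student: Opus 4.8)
The plan is to translate the defining relation $\overline{I^{n+1}}=Q\overline{I^n}$ into a statement about the groups $H^1(\cO_X(-nZ))$ and then feed in \lemref{l:nD}. Fix an integrally closed $\m$-primary ideal $I$ and a resolution $X$ on which $I\cO_X=\cO_X(-Z)$ with $Z$ anti-nef and $\cO_X(-Z)$ generated, so that $\overline{I^n}=H^0(\cO_X(-nZ))$ (the standard description of normal powers, cf. \cite{OWY4}). For a general minimal reduction $Q=(a,b)$ the two sections $a,b$ of $\cO_X(-Z)$ have no common zero on $X$: away from $Z$ they vanish along the strict transforms of $\{a=0\}$ and $\{b=0\}$, which meet only over $\m$ and, being general, are disjoint from each other and from $E$. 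Hence the Koszul complex gives the exact sequence
\[
0\to \cO_X(-(n-1)Z)\to \cO_X(-nZ)^{\oplus 2}\xrightarrow{(a,b)} \cO_X(-(n+1)Z)\to 0,
\]
and taking cohomology identifies $\overline{I^{n+1}}/Q\overline{I^n}$ with the kernel of the multiplication map $H^1(\cO_X(-(n-1)Z))\to H^1(\cO_X(-nZ))^{\oplus 2}$. Since $a,b$ vanish off $E$, multiplication by $a$ is surjective on these $H^1$'s, so the map is injective — and $\overline{I^{n+1}}=Q\overline{I^n}$ — as soon as $h^1(\cO_X(-(n-1)Z))=h^1(\cO_X(-nZ))$. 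By \lemref{l:nD} the sequence $h^1(\cO_X(-nZ))$ is nonincreasing and equals $p_g(Z^{\bot})$ from some index $s_0$ on, which yields $\br(I)\le s_0+1\le p_g(A)+1$. Conversely, when the target $H^1(\cO_X(-nZ))^{\oplus 2}$ vanishes but the source does not, the kernel is nonzero and $\overline{I^{n+1}}\ne Q\overline{I^n}$.

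Part (1) follows. If $A$ is rational then $p_g(A)=0$, and the bound $\br(A)\le p_g(A)+1$ just derived gives $\br(A)=1$. Conversely, if $A$ is not rational, take a strictly anti-nef cycle $Z$ (so $Z^{\bot}=\emptyset$) with $\cO_X(-Z)$ generated and $h^1(\cO_X(-Z))=0$ — a large multiple of any strictly anti-nef cycle works, by \lemref{l:nD}. For the corresponding ideal $I$ the case $n=1$ of the displayed identity gives $\overline{I^2}/Q\overline{I}\cong H^1(\cO_X)$, which is nonzero because $p_g(A)>0$; hence $\br(I)\ge 2$ and $\br(A)\ge 2$. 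For (2), an elliptic singularity has $p_a(A)=1$, so \thmref{t:brp0} gives $\br(A)\le p_a(A)+1=2$; it also has $p_f(A)=1\ne 0$, hence is not rational, so $\br(A)\ge 2$ by what we just proved. Therefore $\br(A)=2$.

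For (3), $A$ is the cone over the plane curve $C=\{f=0\}\subset\PP^2$ ($C$ smooth, which is exactly the normality of $A$). First I would compute $\br(\m)$ directly from the grading. Here $\m\cO_X=\cO_X(-E)$ with $E\cong C$ and $E^2=-d$, and the powers are integrally closed since $C$ is projectively normal, so $\overline{\m^n}=\m^n=\bigoplus_{m\ge n}A_m$. For a minimal reduction $Q=(\ell_1,\ell_2)$ by two general linear forms, $\m^{n+1}=Q\m^n$ holds iff $A_m=\ell_1A_{m-1}+\ell_2A_{m-1}$ for all $m\ge n+1$, i.e. iff $(A/(\ell_1,\ell_2))_m=0$ for all $m\ge n+1$. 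As $A/(\ell_1,\ell_2)\cong k[t]/(t^d)$ has Hilbert function $1$ in degrees $0,\dots,d-1$ and $0$ afterward, this holds precisely for $n\ge d-1$, so $\br(\m)=d-1$. For the upper bound, R{\"o}hr's theorem \thmref{t:rohrV} applied to $\cO_X(-nE)$ (here $\cB=\{E\}$) gives $H^1(\cO_X(-nE))=0$ once $nd=(-nE)E>-2\chi(E)=d(d-3)$, i.e. for $n\ge d-2$; thus $s_0(E)\le d-2$ and $\br(\m)\le d-1$, in agreement. It then remains to show $\br(A)=\br(\m)$, i.e. that $s_0(Z)\le d-2$ for every anti-nef cycle $Z$ representing an integrally closed $\m$-primary ideal, which by \thmref{t:Main0} reduces to verifying $(-nZ)C>-2\chi(C)$ for all $C\in\cB(-nZ)^e$ and all $n\ge d-2$.

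The main obstacle is precisely this last uniform estimate, the inequality $\br(A)\le d-1$ over all integrally closed ideals. None of the general bounds available is sharp enough: for $d\ge 4$ both $p_a(A)+1$ (\thmref{t:brp0}) and the almost-cone bound $p_f(A)+1=(d-1)(d-2)/2+1$ (\thmref{t:conepf}) strictly exceed $d-1$, so the homogeneous hypersurface is genuinely better behaved than a general cone and one cannot simply quote a genus bound. The difficulty is that an arbitrary integrally closed $\m$-primary ideal need not be represented on the minimal resolution, so the cycles $C\in\cB(-nZ)^e$ live on blow-ups of $X$ at points of $C$, and estimating $\chi(C)$ and $-ZC$ there — and checking that the threshold never passes $d-2$ — is the substantive computation carried out in \cite{OWY5}. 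This is where I expect the real work to lie; parts (1), (2) and the value $\br(\m)=d-1$ are comparatively formal.
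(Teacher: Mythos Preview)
This proposition is quoted from \cite{Ok2} and \cite{OWY5} and not proved in the paper; the only in-paper argument is the remark following \thmref{t:brp} that that theorem yields a new proof of part~(2). Your route to~(2) is precisely this. Your cohomological translation of $\br(I)$ via the Koszul sequence is essentially the content of \proref{p:brq} and \corref{c:r=2}, your argument for~(1) is correct, and your graded computation of $\br(\m)=d-1$ in~(3) is the standard one. One small wrinkle in the converse of~(1): \lemref{l:nD} presupposes that $\cO_X(-Z)$ has no fixed components, which a bare strictly anti-nef cycle need not satisfy; you should instead invoke the $\pi$-ampleness of such a cycle (so that $\cO_X(-nZ)$ is globally generated for $n\gg 0$) rather than \lemref{l:nD} directly, after which your argument goes through.

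Your diagnosis of~(3) is accurate: the uniform bound $\br(I)\le d-1$ over \emph{all} integrally closed $\m$-primary ideals is the only substantive point, and neither \thmref{t:brp0} nor \thmref{t:conepf} is sharp enough for $d\ge 4$. The paper itself defers this to \cite{OWY5}, so your deferral is in line with the source.
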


Let $I\subset A$ be an integrally closed $\m$-primary ideal.
Then there exist a resolution $f \colon X \to \Spec A$ and an anti-nef cycle $Z>0$ on $X$ so that 
$I = H^0(\mathcal{O}_X(-Z))$  and $I\mathcal{O}_X=\mathcal{O}_X(-Z)$;
 in this case, we say that $I$ is {\em represented} by $Z $ on $X$ and write $I = I_Z$.   

{\bf In the following, $I$ always denotes an integrally closed $\m$-primary ideal of $A$ represented by a cycle $Z>0$ on $X$.  
}  Then the sheaf $\cO_X(-Z)$ has all information about $I$.

\begin{defn} \label{qI} 
For every integer $n \in \Z_{\ge 0}$, we define $q_I(n):= h^1(\mathcal{O}_X(-nZ))$. 
In particular, $q_I(0)=p_g(A)$. 
Notice that $q_I(n)$ is independent of the representation (\cite[Lemma 3.4]{OWY1}). 
\end{defn}

By \lemref{l:nD}, we have $q_I(n)\ge q_I(n+1)$ for every $n\in \Z_{\ge 0}$,
 and $q_I(n) =p_g(Z^{\bot})$ for every $n\ge p_g(A)$.
If $q_I(1)=q_I(0)=p_g(A)$, we have $q_I(n)=p_g(A)$ for $n\in \Z_{\ge 0}$; such an ideal is called a $p_g$-ideal (\cite{OWY1}).

\begin{prop}[{\cite[\S 2]{OWY4}}]
\label{p:brq}
We have the following.
\begin{align*}
\br(I) & = \min\{n \in \Z_{>0} \,|\, q_I(n-1)=q_I(n)  \} \\
& = \min\{n \in \Z_{>0} \,|\, q_I(n-1)=p_g(Z^{\bot})  \}.
\end{align*}
\end{prop}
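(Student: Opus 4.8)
The plan is to express the colength $\ell\!\left(\overline{I^{n+1}}/Q\overline{I^n}\right)$ entirely in terms of the function $q_I$ and then to read off $\br(I)$ from the shape of $q_I$.

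First, the two minima on the right-hand side coincide by \lemref{l:nD}. Taking $L=-Z$, the assumption $I\cO_X=\cO_X(-Z)$ means exactly that $\cO_X(-Z)$ is generated by global sections, so parts (2) and (3) of \lemref{l:nD} apply with $s_0=\min\{s\in\Z_{\ge 0}\mid q_I(s)=q_I(s+1)\}$; they give $q_I(s)=p_g(Z^{\bot})$ for every $s\ge s_0$. Hence both $\min\{n\mid q_I(n-1)=q_I(n)\}$ and $\min\{n\mid q_I(n-1)=p_g(Z^{\bot})\}$ equal $s_0+1$, and it remains to prove $\br(I)=s_0+1$.

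The substance is the reduction step. I would choose the minimal reduction $Q=(a,b)$ generated by two general elements of $I$, which is legitimate because $\br(I)$ does not depend on $Q$ and $k$ is infinite. Viewing $a,b$ as global sections of the invertible, globally generated sheaf $\cO_X(-Z)$, the key geometric point is that their zero schemes (as sections) are disjoint: off $E$ this holds because $V(Q)=\{\m\}$, while near $E$ it follows from a Bertini argument, since the morphism attached to $\cO_X(-Z)=I\cO_X$ contracts $Z^{\bot}$ and maps $E$ to a set of dimension at most one, which a general codimension-two linear section of the target avoids. Consequently the Koszul complex of $a,b$, twisted by $\cO_X(-nZ)$, is a short exact sequence of sheaves
\[
0\to\cO_X(-(n-1)Z)\xrightarrow{\ \binom{-b}{a}\ }\cO_X(-nZ)^{\oplus 2}\xrightarrow{\ (a,b)\ }\cO_X(-(n+1)Z)\to 0.
\]
Passing to the long exact cohomology sequence, the image of $(a,b)$ on $H^0$ is $a\overline{I^n}+b\overline{I^n}=Q\overline{I^n}$ inside $H^0(\cO_X(-(n+1)Z))=\overline{I^{n+1}}$, so the connecting map identifies $\overline{I^{n+1}}/Q\overline{I^n}$ with $\Ker\psi_n$, where $\psi_n\colon H^1(\cO_X(-(n-1)Z))\to H^1(\cO_X(-nZ))^{\oplus 2}$ is induced by $\binom{-b}{a}$. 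Since $H^2(\cO_X(-(n-1)Z))=0$, the terminal map on $H^1$ is onto $H^1(\cO_X(-(n+1)Z))$, and rank–nullity gives
\[
\ell\!\left(\overline{I^{n+1}}/Q\overline{I^n}\right)=\dim_k\Ker\psi_n=q_I(n-1)-2q_I(n)+q_I(n+1).
\]

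It remains to combine this with monotonicity. Writing $d(m)=q_I(m)-q_I(m-1)\le 0$ (\lemref{l:nD}(1)), the displayed length equals $d(n+1)-d(n)\ge 0$, so $d$ is nondecreasing and, being eventually $0$ by \lemref{l:nD}(3), it follows that $\overline{I^{n+1}}=Q\overline{I^n}$ holds for all $n\ge r$ if and only if $d(r)=0$, i.e.\ $q_I(r-1)=q_I(r)$. By the definition of the normal reduction number this yields $\br(I)=\min\{n\mid q_I(n-1)=q_I(n)\}=s_0+1$, completing the proof. The one genuinely nonformal step is the geometric input that a general minimal reduction produces sections of $\cO_X(-Z)$ with disjoint zero loci, so that the Koszul sequence is exact on the right; once that is secured, the argument is a diagram chase followed by the elementary monotonicity computation.
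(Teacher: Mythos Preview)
Your proof is correct and follows what is essentially the standard argument behind the cited result. The paper itself does not give a proof of \proref{p:brq}; it simply records the statement from \cite[\S 2]{OWY4}. The approach there is exactly the one you take: for a general minimal reduction $Q=(a,b)$ the two sections generate $\cO_X(-Z)$, so the twisted Koszul sequence
\[
0\to\cO_X(-(n-1)Z)\to\cO_X(-nZ)^{\oplus 2}\to\cO_X(-(n+1)Z)\to 0
\]
is exact, and the long exact cohomology sequence together with $H^2=0$ yields the second--difference formula
\[
\ell_A\!\left(\overline{I^{n+1}}/Q\overline{I^n}\right)=q_I(n-1)-2q_I(n)+q_I(n+1)\ge 0,
\]
from which the identification of $\br(I)$ with $\min\{n\mid q_I(n-1)=q_I(n)\}$ follows by the monotonicity argument you give. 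Your justification of the geometric input (that general $a,b\in I$ have no common zero as sections of $\cO_X(-Z)$) is also the right one: away from $E$ this is the $\m$--primariness of $Q$, and along $E$ one uses that the evaluation maps $I/\m I\to \cO_X(-Z)\otimes k(x)$ for $x\in E$ trace out an at most one--parameter family of hyperplanes in the finite--dimensional space $I/\m I$, so a general pair of classes avoids all of them simultaneously. The appeal to \lemref{l:nD}(2)--(3) to match the two displayed minima is likewise correct, since $\cO_X(-Z)=I\cO_X$ is globally generated by hypothesis.
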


\lemref{l:nD} and \proref{p:brq} imply the following.

\begin{cor}
\label{c:r=2}
We have the following.
\begin{enumerate}
\item $\br(I)=1$ if and only if $p_g(A)=q_I(1)$ or $p_g(Z^{\bot})$.
\item $\br(I)=2$ if and only if $p_g(A)>q_I(1)=p_g(Z^{\bot})$.

\item  $\br (A) \le p_g(A)+1$.
\end{enumerate}
\end{cor}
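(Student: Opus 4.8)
The plan is to set $L=-Z$ and translate the entire statement into assertions about the single non-increasing function $q_I(s)=h^1(\cO_X(sL))=h^1(\cO_X(-sZ))$. First I would observe that, because $I$ is integrally closed and represented by $Z$, the sheaf $\cO_X(-Z)=I\cO_X$ is generated by its global sections; in particular $L=-Z$ has no fixed components and $\cO_X(L)$ is generated, so all four parts of \lemref{l:nD} apply, with $L^{\bot}=Z^{\bot}$.

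Next I would record the three consequences I need. From \lemref{l:nD}~(1), $q_I$ is non-increasing, so $q_I(0)\ge q_I(1)\ge \cdots$, with $q_I(0)=p_g(A)$ by \defref{qI}. From \lemref{l:nD}~(3), $q_I(s)=p_g(Z^{\bot})$ for all $s\ge s_0$, where $s_0$ is the stabilization index of \lemref{l:nD}~(1). From \lemref{l:nD}~(2), since $\cO_X(L)$ is generated, $s_0$ equals the smallest $s$ with $q_I(s)=q_I(s+1)$ and satisfies $s_0\le p_g(A)$; equivalently $q_I$ is strictly decreasing on $\{0,1,\dots,s_0\}$ and constant afterwards. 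Combining this with the first equality of \proref{p:brq} yields the key identity $\br(I)=s_0+1$.

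With this identity the three statements become bookkeeping. For (3): $\br(I)=s_0+1\le p_g(A)+1$ holds for every integrally closed $\m$-primary ideal $I$, and taking the maximum over such $I$ gives $\br(A)\le p_g(A)+1$. For (1) and (2) I would read off the two equalities of \proref{p:brq} directly, using the chain $p_g(A)=q_I(0)\ge q_I(1)\ge p_g(Z^{\bot})$. The condition $\br(I)=1$ is equivalent to $q_I(0)=q_I(1)$ (first form of \proref{p:brq}) and to $q_I(0)=p_g(Z^{\bot})$ (second form), i.e.\ to $p_g(A)=q_I(1)$ and to $p_g(A)=p_g(Z^{\bot})$; this gives (1). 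Likewise $\br(I)=2$ means $q_I(1)=p_g(Z^{\bot})$ but $q_I(0)>p_g(Z^{\bot})$, i.e.\ $p_g(A)>q_I(1)=p_g(Z^{\bot})$, which is (2).

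I do not expect a serious obstacle: the entire content sits in \lemref{l:nD} and \proref{p:brq}, and the work is simply to confirm that the monotonicity and eventual constancy of $q_I$ let one pass freely between the two descriptions of $\br(I)$. The only point needing a little care is verifying that the two conditions appearing in (1) are genuinely equivalent, so that the ``or'' is consistent; I expect this to follow at once from the sandwich $p_g(A)\ge q_I(1)\ge p_g(Z^{\bot})$ collapsing to equality whenever either end condition holds.
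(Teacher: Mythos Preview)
Your proposal is correct and matches the paper's intended approach: the paper offers no explicit proof of \corref{c:r=2}, stating only that it follows from \lemref{l:nD} and \proref{p:brq}, and what you have written is precisely the unpacking of that implication. Your observation that $\cO_X(-Z)=I\cO_X$ is generated (so \lemref{l:nD}~(2) applies), the identification $\br(I)=s_0+1$, and the check that the two conditions in (1) are equivalent via the sandwich $p_g(A)=q_I(0)\ge q_I(1)\ge p_g(Z^{\bot})$ are exactly the details one needs.
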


Next, we will give 
sharpened upper bounds for $\br(A)$ compared with \corref{c:r=2} (3).

\begin{defn}\label{d:lambda}
Let $U\to \spec (A)$ be a resolution and let  $W>0$ be a cycle on $U$ representing an integrally closed $\m$-primary ideal of $A$.
We define 
\[
\lambda(W,U)=\max\defset{\fl{ \frac{2p_a(D)-2}{-WD} } }{D\in \cB(W)^e \vphantom{\dfrac{A}{B}}},
\]
where $\cB(W)^e$ is defined for cycles on $U$ in the same way as for cycles on $X$. 
We define invariants $\lambda(I)$ and $\lambda(A)$ as follows:
\begin{gather*}
\lambda(I)=\min\defset{\lambda(W,U)}{\text{$U$ is a resolution and $W$ a cycle on $U$ representing $I$}} \\
\lambda(A)=\max\defset{\lambda(I)}{\text{ $I$ is an integrally closed $\m$-primary  ideal of $A$}}.
\end{gather*}
\end{defn}

\begin{thm}
\label{t:lambda}
We have the following inequalities:
\begin{enumerate}
\item $\br(I) \le \lambda(I)+2 \le \lambda(Z,X) + 2$,
\item $\br(A) \le \lambda(A) +2$.
\end{enumerate}
\end{thm}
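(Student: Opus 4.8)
The plan is to apply \thmref{t:Main} to the family of divisors $L=-nZ$ $(n\ge 1)$ and to convert the resulting vanishing into information about $q_I(n)=h^1(\cO_X(-nZ))$ through \proref{p:brq}. Throughout, $I=I_Z$ is represented by the anti-nef cycle $Z>0$ on $X$.

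First I would check that each $L=-nZ$ is an admissible input for \thmref{t:Main}. Since $I\cO_X=\cO_X(-Z)$, the sheaf $\cO_X(-Z)$ is generated by global sections, hence so is every tensor power $\cO_X(-nZ)$; in particular $\cO_X(-nZ)$ has no fixed components for all $n\ge 1$. As $Z>0$ and the intersection form is negative definite on $E$, the cycle $Z$ is not numerically trivial, and neither is $-nZ$. Finally, the equivalence $(-nZ)E_i=0\iff ZE_i=0$ gives $(-nZ)^{\bot}=Z^{\bot}$ and $\cE^+(-nZ)=\cE^+(Z)$, whence $\cB(-nZ)^e=\cB(Z)^e$.

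The crux is the numerical translation of the hypothesis of \thmref{t:Main}. For $C\in\cB(Z)^e\subset\cE^+(Z)$ the anti-nef cycle $Z$ satisfies $ZC<0$, so $-ZC>0$, and the required inequality $(-nZ)C>-2\chi(C)$ is equivalent to
\[
n>\frac{-2\chi(C)}{-ZC}=\frac{2p_a(C)-2}{-ZC}.
\]
For an integer $n$ this holds simultaneously for all $C\in\cB(Z)^e$ precisely when $n\ge \lambda(Z,X)+1$, directly from the definition of $\lambda(Z,X)$ as the maximum of these floors. Hence \thmref{t:Main} yields $q_I(n)=p_g(Z^{\bot})$ for all $n\ge\lambda(Z,X)+1$. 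Feeding this into $\br(I)=\min\{\,n\in\Z_{>0}\mid q_I(n-1)=p_g(Z^{\bot})\,\}$ from \proref{p:brq}, the defining condition is met once $n-1\ge\lambda(Z,X)+1$, giving $\br(I)\le\lambda(Z,X)+2$.

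The remaining inequalities are bookkeeping over representations. Because $q_I(n)$ and $\br(I)$ do not depend on the chosen representation, the same argument carried out on any resolution $U$ with any cycle $W$ representing $I$ gives $\br(I)\le\lambda(W,U)+2$; minimizing over all representations yields $\br(I)\le\lambda(I)+2$, while $\lambda(I)\le\lambda(Z,X)$ is immediate since $\lambda(I)$ is a minimum, which proves (1). For (2), combining (1) with $\lambda(I)\le\lambda(A)$ and taking the maximum over all integrally closed $\m$-primary ideals gives $\br(A)=\max_I\br(I)\le\lambda(A)+2$. I expect the main obstacle to be the numerical step — keeping $\cO_X(-nZ)$ fixed-component-free for every $n$ and matching the floor defining $\lambda(Z,X)$ to the strict inequality of \thmref{t:Main} exactly — together with a short separate check that the bound is consistent with $\br(I)\ge 1$; the only delicate range is $A$ rational, where $\br(I)=1$ is already known by \proref{p:reh} and the presence of $Z_f\in\cB(Z)^e$ controls the lower end of $\lambda(Z,X)$.
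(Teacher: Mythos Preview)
Your proposal is correct and follows essentially the same approach as the paper: apply \thmref{t:Main} with $L=-(\lambda(Z,X)+1)Z$ to conclude $q_I(\lambda(Z,X)+1)=p_g(Z^{\bot})$, then invoke \proref{p:brq}. Your version is more explicit about verifying the hypotheses (no fixed components, $\cB(-nZ)^e=\cB(Z)^e$) and unwinding the floor in the definition of $\lambda(Z,X)$, whereas the paper simply writes the single chain $(\lambda+1)(-ZD)\ge(\lfloor -2\chi(D)/(-ZD)\rfloor+1)(-ZD)>-2\chi(D)$; the closing remarks about consistency with $\br(I)\ge 1$ in the rational case are unnecessary, since the inequality $\br(I)\le\lambda(Z,X)+2$ is what is being proved and nothing in the argument requires $\lambda(Z,X)\ge -1$ as an input.
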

\begin{proof}
Let $\lambda=\lambda(Z,X)$. 
For any  $D\in \cB(Z)^e$, since $-ZD>0$, we have 
\[
(\lambda+1)\cdot (-ZD) 
\ge \left(\fl{ \frac{-2\chi(D)}{-ZD} } +1\right)\cdot (-ZD) > -2\chi(D).
\]
Therefore, by \thmref{t:Main}, $h^1\left(\cO_X(-(\lambda+1) Z)\right)=p_g(Z^{\bot})$.
 Hence, $\br(I) \le \lambda +2$ by \proref{p:brq}.
Since the same argument applies to every representation of $I$, 
the inequality (1) follows.
The inequality (2) follows directly from the definitions of $\lambda(A)$ and $\br(A)$.
\end{proof}

\begin{rem}
If $-ZD\ge 2$ for any $D\in \cB(Z)^e$, then 
\[
\lambda(Z,X) \le \max\defset{p_a(D) -1 }{D\in \cB(Z)^e} \le p_a(A)-1.
\]
Hence we easily obtain $\br (I) \le p_a(A)+1$ in this case.
\end{rem}

The following theorem gives a  combinatorial upper bound for $\br(A)$.

\begin{thm}\label{t:brp}
$\br(A)\le p_a(A)+1$.
\end{thm}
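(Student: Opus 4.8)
The plan is to deduce $\br(A) \le p_a(A)+1$ from \thmref{t:lambda}(2) by showing $\lambda(A) \le p_a(A)-1$. By definition of $\lambda(A)$ as a maximum over ideals $I$ of $\lambda(I)$, and since $\lambda(I)$ is itself a minimum over representations, it suffices to bound $\lambda(W,U)$ for a single well-chosen representation $(W,U)$ of each integrally closed $\m$-primary ideal $I$. The quantity to control is
\[
\lambda(W,U)=\max\defset{\fl{\frac{2p_a(D)-2}{-WD}}}{D\in \cB(W)^e},
\]
so I need to argue that for every $D\in \cB(W)^e$ one has $\fl{(2p_a(D)-2)/(-WD)}\le p_a(A)-1$.

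The key observation is that $-WD\ge 1$ for all $D\in \cB(W)^e\subset \cE^+(W)$, since $W$ is anti-nef and $WD\ne 0$ forces $WD<0$. If one could also guarantee $-WD\ge 2$, then as noted in the Remark following \thmref{t:lambda} the bound is immediate: $\fl{(2p_a(D)-2)/(-WD)}\le p_a(D)-1\le p_a(A)-1$. The main obstacle is therefore exactly the components $D$ with $-WD=1$, for which the naive estimate only yields $2p_a(D)-2$, roughly twice the desired value. First I would examine these troublesome cycles: for $D\in \cB(W)^e$ with $-WD=1$, I want to show that nevertheless $p_a(D)\le p_a(A)$, i.e. $2p_a(D)-2\le 2p_a(A)-2$, which is too weak, so a finer structural argument is needed rather than a crude degree count.

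The route I expect to work is to choose the representation $(W,U)$ so that $W$ has no common components with trouble, or more precisely to pass to a representation for which every relevant $D$ satisfies $-WD\ge 2$; this is plausible because $\lambda(I)$ is defined as a minimum over all representations, giving us the freedom to blow up. Concretely, I would try to produce, from an arbitrary representation, a modified resolution $U'\to U$ on which the pulled-back cycle $W'$ representing $I$ intersects each $D\in \cB(W')^e$ with multiplicity at least $2$; replacing a component $E_i$ with $-WE_i=1$ by a suitable chain after blowing up a point of $E_i$ should increase the intersection degree while preserving that $W'$ represents the same ideal $I$. The delicate point is verifying that after this modification the set $\cB(W')^e$ and the arithmetic genera behave so that the estimate $\fl{(2p_a(D)-2)/(-W'D)}\le p_a(A)-1$ holds uniformly; this genus-bookkeeping under blow-up, together with checking $-W'D\ge 2$ for every member of $\cB(W')^e$, is where the real work lies.

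Alternatively, and perhaps more cleanly, I would avoid modifying the resolution and instead sharpen the combinatorial estimate directly. For $D\in \cB(W)^e$ write $D=C_1+C_2$ as in condition $(\#)$, with $C_1\in \cB(W)$ and $\supp(C_2)\subset W^{\bot}$, so that $-WD=-WC_1$ and, by \eqref{eq:chi}, $p_a(D)=p_a(C_1)+p_a(C_2)+C_1C_2-1$ with $C_1C_2\le 0$. Since $C_1\in \cB$ we have $p_a(C_1)\le p_f(A)\le p_a(A)$ from \eqref{eq: h1B}, and $\supp(C_2)\subset W^{\bot}$ constrains $p_a(C_2)$ through the geometry of the part contracted by the associated morphism. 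I would combine these with the case analysis $-WC_1\ge 2$ versus $-WC_1=1$, using that in the latter case $C_1$ meets the complement of $W^{\bot}$ in essentially a single point so that $p_a(D)$ cannot exceed $p_a(A)$ with room to spare; the hard part will be squeezing out the factor of two in precisely the $-WD=1$ case, which I expect to be the crux of the whole argument.
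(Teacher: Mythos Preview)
Your diagnosis is right that the entire difficulty sits in the case $-ZD=1$, but neither of your two proposed fixes closes the gap, and the paper does not go through \thmref{t:lambda} at all. For approach~(a): if $f\:X'\to X$ is any blow-up then $I$ is represented by $f^*Z$, and for the strict transform $f_*^{-1}E_i$ one has $f^*Z\cdot f_*^{-1}E_i=f^*Z\cdot (f^*E_i-kE_0)=Z\cdot E_i$, so the troublesome intersection number is unchanged; blowing up only enlarges $Z^{\bot}$ and cannot force $-W'D\ge 2$. For approach~(b): the inequality $p_a(D)\le p_a(C_1)+p_a(C_2)-1\le p_f(A)+p_a(C_2)-1$ gives no usable bound because $p_a(C_2)$, with $\supp(C_2)\subset Z^{\bot}$, can itself be as large as $p_a(A)$; you end up with roughly $4p_a(A)$ instead of $p_a(A)$. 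Purely combinatorial bookkeeping on $\chi$ cannot save this case, because the obstruction is geometric, not numerical.

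The missing idea is that $\cO_X(-Z)$ is globally generated, and this must be used beyond the mere positivity $-ZD>0$. The paper exploits it by passing to the partial resolution $f\:X\to Y$ contracting $Z^{\bot}$ and arguing directly that $H^1(\cO_Y(-p\,f_*Z))=0$ for $p=p_a(A)$: if not, \lemref{l:nv} produces a \emph{minimal} cycle $W>0$ on $Y$ with $0<(-p\,f_*Z)W\le -2\chi(W)\le 2(p-1)$, hence $(-f_*Z)W=1$ and $W$ is irreducible. But then $\cO_W(-f_*Z)$ is a globally generated line bundle of degree~$1$ on the integral curve $W$, which forces $W\cong\PP^1$, contradicting $p_a(W)>1$. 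It is this reduction to an irreducible $W$ (via minimality on $Y$) together with the ``degree~$1$ and generated $\Rightarrow$ rational'' step that you are missing; without it the bound $\lambda(A)\le p_a(A)-1$ need not hold, so the route through \thmref{t:lambda} alone is insufficient.
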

\begin{proof}
We may assume that $p_a(A)>0$, i.e., $A$ is not a rational singularity.
Let $I$ be any integrally closed $\m$-primary ideal and assume that $I$ is represented by a cycle $Z$ on $X$.
We will show that the inequality $\br(I)\le p_a(A)+1$ holds.
By \proref{p:brq}, it suffices  to show that $H^1(\cO_X(-p_a(A)Z))=p_g(Z^{\bot})$.
We apply the argument of \sref{s:vanish} with $L=- p_a(A) Z$. 
Let $f\: X\to Y$ be the morphism that contracts $Z^{\bot}$ as in \sref{s:vanish}.
Note that $\cO_Y(-f_*Z)$ is also invertible,  generated, and $(-f_*Z)W>0$ for every cycle $W>0$ on $Y$.
 Let $p=p_a(A)$ and $D=-pf_*Z$. 
By \lemref{l:chif+} (2), it is sufficient to show that $h^1(\cO_Y(D))=0$.
 Suppose that $H^1(\cO_Y(D))\ne 0$.
Then there exists a cycle $W>0$ on $Y$ such that $H^1(\cO_W(D))\ne 0$.
Assume that $W$ is the minimal cycles with this property.
Then $0<DW \le -2\chi(W)$ by \lemref{l:nv}.
From \lemref{l:chif+} (3) and the definition of $p_a(A)$, 
we have $\chi(W) \ge \chi(f^+W)\ge 1-p$, and thus $DW \le 2(p-1)$.
It follows that $0<(-f_*Z)W \le 2(1-1/p)<2$.
Therefore,  $(-f_*Z)W=1$ and $W$ is irreducible.
However, since $\cO_W(-f_*Z)$ is generated and $p_a(W)=1-\chi(W)>1$, we must  have $\deg \cO_W(-f_*Z)=(-f_*Z)W \ge 2$, a contradiction.
Hence, we obtain $h^1(\cO_Y(D))=0$.
\end{proof}

Note that \thmref{t:brp} offers a new proof for (2) of \proref{p:reh}.

\section{almost cone singularities}\label{s:ACS}
We continue to use the notation of the preceding section.
We say that $A$ is a {\em cone-like singularity} if the exceptional set of of the minimal resolution is a smooth curve.
In \cite{OWY5}, we proved that for a non-rational cone-like singularity with the exceptional curve $E$ on the minimal resolution,  
\[
\br(A) \le \fl{(2g-2)/\min\{-E^2, \gon(E)\}}+2,
\]
 where $\gon(E)$ denotes the gonality of $E$. 
The aim of this section is to generalize and sharpen this result.
The following results on the minimal model of a cycle will be used for introducing the notion of almost cone singularities, 
which generalize the cone-like singularities, 
and for the proof of facts in \exref{e:br346}.
\begin{prop}
[Konno {\cite[3.1, 3.3]{Ko-CC}}]
\label{p:minmodel}
For $D\in \cB$ with $\chi(D)\le 0$, there exists a unique cycle $0< \mc D \le D$ such that 
$\chi(\mc D)=\chi(D)$ and that $K_X+\mc D$ is nef on $\mc D$.
Moreover, we also have $\mc D \in \cB$ and the following:
\begin{align*}
\mc D &=\min\defset{C\in \cE^+}{C\le D, \, \chi(C)=\chi(D)} \\
&=\max\defset{C\in \cE^+}{C\le D, \, K_X+C \text{ is nef on $C$}}.
\end{align*}
\end{prop}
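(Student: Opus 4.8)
The plan is to produce a single cycle $\mc D$ and verify that it simultaneously satisfies the two extremal descriptions in the statement; the engine throughout is the submodularity of $\chi$. For effective cycles $A,B$, write $A\vee B$ and $A\wedge B$ for the componentwise maximum and minimum, so that $A+B=(A\vee B)+(A\wedge B)$. A direct Riemann--Roch computation from $\chi(C)=-C(C+K_X)/2$, in which the $K_X$-terms cancel and the quadratic terms reduce to a sum over $i\ne j$, gives
\[
\chi(A\vee B)+\chi(A\wedge B)\le \chi(A)+\chi(B),
\]
where the sign is controlled by $E_iE_j\ge 0$ for $i\ne j$. This inequality is what forces the uniqueness and the lattice behaviour of the sets below.

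First I would realize $\mc D$ as the maximum of $S:=\defset{C\in\cE^+}{C\le D,\ K_X+C\ \text{is nef on}\ C}$. The set $S$ is closed under $\vee$: if $C,C'\in S$ and $E_i\subset\supp(C\vee C')$ has its coefficient coming from $C$, then $(K_X+C\vee C')E_i=(K_X+C)E_i+\bigl((C'-C)\vee 0\bigr)E_i\ge 0$, the first term by nefness and the second because it is the intersection of $E_i$ with an effective cycle not containing $E_i$. Hence, once $S\ne\emptyset$, it has a maximum $\mc D:=\bigvee_{C\in S}C$. Nonemptiness is exactly where $\chi(D)\le 0$ enters: starting from $D$, as long as $K_X+C$ fails to be nef on the current cycle $C$ I subtract a component $E_i$ with $(K_X+C)E_i<0$, and the identity $\chi(C)-\chi(C-E_i)=(p_a(E_i)-1)-(K_X+C)E_i\ge p_a(E_i)\ge 0$ keeps $\chi\le 0$; the process therefore cannot terminate at a single reduced curve $E_j$, since that would force $\chi(E_j)=1>0$, so it must stop at a positive cycle lying in $S$.

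Next I would establish $\chi(\mc D)=\chi(D)$ together with the two descriptions, using two adjunction estimates. Nefness of $K_X+\mc D$ on $\mc D$ gives, for every $0<B\le\mc D$, the bound $(\mc D-B)B\ge -K_XB-B^2=2\chi(B)$, hence
\[
\chi(\mc D-B)\ge \chi(\mc D)+\chi(B);
\]
maximality of $\mc D$ in $S$ gives, for every $E_i$ with $\mc D+E_i\le D$, the failure of nefness at $E_i$, i.e. $(K_X+\mc D+E_i)E_i<0$, equivalently $\mc D\,E_i<2\chi(E_i)$. Feeding these two estimates, together with submodularity, into the decomposition $\chi(D)=\chi(\mc D)+\chi(D-\mc D)-\mc D(D-\mc D)$ pins down $\chi(\mc D)=\chi(D)$; the ``maximality'' inequality then shows that no cycle strictly between $\mc D$ and $D$ is nef, which is the description $\mc D=\max S$, while the ``nef'' inequality shows that any $C\le D$ with $\chi(C)=\chi(D)$ must dominate $\mc D$, which is the description $\mc D=\min\defset{C\in\cE^+}{C\le D,\ \chi(C)=\chi(D)}$.

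Finally, $\mc D\in\cB$ follows by showing $\mc D$ is chain-connected: a splitting $0<D_1<\mc D$ with $D_1$ anti-nef on $\mc D-D_1$ would, through submodularity and the nef inequality, contradict the minimality of $\mc D$ among cycles $\le D$ of the same $\chi$, and uniqueness of $\mc D$ is immediate from that same minimality. I expect the genuine obstacle to be precisely the reconciliation carried out in the third step: proving $\chi(\mc D)=\chi(D)$ amounts to showing that the descent from $D$ to $\mc D$ never strictly lowers $\chi$ and that every proper subcycle $0<B<\mc D$ contributes $\chi(B)>0$, and balancing the ``nef'' inequality pushing up from below against the ``maximality'' inequality pushing down from above is the delicate bookkeeping on which the whole proposition turns.
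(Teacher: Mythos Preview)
The paper does not prove this proposition; it simply cites Konno \cite[3.1, 3.3]{Ko-CC}. So there is no ``paper's proof'' to compare against, and the question is only whether your outline stands on its own.

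Your building blocks are sound: the submodularity inequality for $\chi$ is correct, the set $S$ is indeed closed under $\vee$ (your check that failure of nefness can only occur at the newly added component is right), the descent from $D$ does stay inside $\{C:\mc D\le C\le D\}$ and hence terminates exactly at $\mc D$, and the identity $\chi(C)-\chi(C-E_i)=(p_a(E_i)-1)-(K_X+C)E_i$ does keep $\chi\le 0$ throughout. That already gives $\chi(\mc D)\le \chi(D)$.

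The genuine gap is the one you yourself flag: you never close the loop on $\chi(\mc D)\ge \chi(D)$, and your phrase ``feeding these two estimates \dots pins down $\chi(\mc D)=\chi(D)$'' does not become a proof. The maximality inequality $\mc D E_i<2\chi(E_i)$ controls one step of an ascent from $\mc D$, not the whole passage to $D$, and combining it with the decomposition $\chi(D)=\chi(\mc D)+\chi(D-\mc D)-\mc D(D-\mc D)$ leaves uncontrolled quadratic terms. What is missing is precisely the hypothesis $D\in\cB$, which you only invoke at the very end. Chain-connectedness gives $h^0(\cO_D)=1$ (this is \eqref{eq: h1B}), and then the surjection $\cO_D\to\cO_C$ for any $0<C\le D$ forces $h^1(\cO_C)\le h^1(\cO_D)$ and hence $\chi(C)\ge \chi(D)$. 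This single cohomological input immediately yields $\chi(\mc D)=\chi(D)$, shows the descent preserves $\chi$ at every step, and makes the set $\{0<C\le D:\chi(C)=\chi(D)\}$ closed under $\wedge$ (the meet cannot be $0$ when $\chi(D)<0$; the case $\chi(D)=0$ needs a short separate argument). Once you have that lower bound, the min description follows: the minimum $M$ of that set must lie in $S$ (else subtracting a bad $E_i$ contradicts minimality), so $M\le\mc D$, while conversely $\mc D$ lies in the set, so $M\le\mc D$ with $\chi(M)=\chi(\mc D)$, and the nef inequality plus $\chi(\mc D - M)\ge\chi(D)$ forces $M=\mc D$ after a short computation. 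In short: your purely numerical strategy cannot replace the one cohomological fact that $D\in\cB$ supplies, and that fact should be invoked at the start of step three rather than deferred to the chain-connectedness of $\mc D$ at the end.
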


\begin{defn}
We call $\mc D$ the {\em minimal model} of $D$. 
\end{defn}

\begin{lem}
\label{l:outmc}
Let $D\in \cB$ and let $\{C_1, \dots, C_m\}\subset \cB$ be a computation sequence from $C_1$ to $C_m=D$ such that $\mc{D}\le C_1$ and $C_i=C_{i-1}+E_{j_i}$ $(1<i\le m)$.
Then we have the following$:$
\[
\chi(C_1)=\chi(C_i), \ \ p_a(E_{j_i})=0, \ \ C_{i-1} E_{j_i}=1 \ \ (1<i\le m).
\]
\end{lem}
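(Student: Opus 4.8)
The goal is to analyze a computation sequence $\{C_1,\dots,C_m\}$ from $C_1 \ge \mc{D}$ to $C_m = D$, where $D \in \cB$ has $\chi(D) \le 0$ and $\mc{D}$ is its minimal model. The plan is to prove the three assertions simultaneously by induction on $i$, exploiting the defining property of the minimal model from \proref{p:minmodel}, namely that $K_X + \mc{D}$ is nef on $\mc{D}$, together with the adjunction-type relation $\chi(C_i) = \chi(C_{i-1}) + \chi(E_{j_i}) - C_{i-1}E_{j_i}$ coming from \eqref{eq:chi}.

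First I would set up the induction. The base case is $C_1 \ge \mc{D}$; since $\chi(\mc D) = \chi(D)$ is the minimum value of $\chi$ among subcycles of $D$ by \proref{p:minmodel}, and since $\mc{D} \le C_1 \le C_m = D$, monotonicity forces $\chi(C_1) = \chi(D)$. For the inductive step, I assume $\chi(C_{i-1}) = \chi(D)$ and examine the transition $C_i = C_{i-1} + E_{j_i}$. The formula \eqref{eq:chi} gives
\[
\chi(C_i) = \chi(C_{i-1}) + \chi(E_{j_i}) - C_{i-1}E_{j_i} = \chi(C_{i-1}) + (1 - p_a(E_{j_i})) - C_{i-1}E_{j_i}.
\]
Because $\mc{D} \le C_{i-1}$ and $E_{j_i} \subset \supp(D)$, I would argue using the nefness of $K_X + \mc{D}$ on $\mc{D}$, refined by the minimal-model characterization, that $K_X + C_{i-1}$ is nef on the relevant components; more precisely, the key inequality I need is $\chi(C_i) \ge \chi(C_{i-1})$, i.e. that adding $E_{j_i}$ does not decrease $\chi$. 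Since every $C_i \le D$ and $\chi(C_{i-1}) = \chi(\mc D)$ is already the minimal value attainable among subcycles of $D$, this forces $\chi(C_i) = \chi(C_{i-1}) = \chi(D)$, which is the first assertion.

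With $\chi(C_i) = \chi(C_{i-1})$ established, the displayed formula collapses to $0 = (1 - p_a(E_{j_i})) - C_{i-1}E_{j_i}$, that is $C_{i-1}E_{j_i} = 1 - p_a(E_{j_i})$. The remaining task is to separate this single equation into the two desired conclusions $p_a(E_{j_i}) = 0$ and $C_{i-1}E_{j_i} = 1$. Here I would use the computation-sequence condition $C_{i-1}E_{j_i} > 0$, which is built into the definition of a computation sequence for the fundamental cycle in \sref{s:CCC}; combined with $p_a(E_{j_i}) \ge 0$ (arithmetic genus of an irreducible curve is nonnegative), the equation $C_{i-1}E_{j_i} = 1 - p_a(E_{j_i})$ with left side a positive integer and right side at most $1$ pins down $p_a(E_{j_i}) = 0$ and $C_{i-1}E_{j_i} = 1$.

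The step I expect to be the main obstacle is justifying the inequality $\chi(C_i) \ge \chi(C_{i-1})$ rigorously, since this is where the minimal-model property must be genuinely invoked. The subtlety is that the generic nefness of $K_X + \mc{D}$ is a statement about $\mc{D}$, whereas I need to control the intersection number $C_{i-1}E_{j_i}$ for $C_{i-1}$ lying between $\mc D$ and $D$. I anticipate that the cleanest route is to note that $\mc{D}$ is the minimum cycle below $D$ achieving $\chi(D)$ (the first characterization in \proref{p:minmodel}), so that any $C$ with $\mc D \le C \le D$ satisfies $\chi(C) \ge \chi(D) = \chi(\mc D)$; applying this to both $C_{i-1}$ and $C_i$ gives $\chi(C_{i-1}) = \chi(C_i) = \chi(D)$ directly once the induction is anchored, bypassing the need for a delicate intersection-theoretic estimate. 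Thus the monotonicity of $\chi$ along the sequence, forced from below by the minimality of $\mc D$ and from above by $\chi(C_1) = \chi(D)$, is the crux of the argument.
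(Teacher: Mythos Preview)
Your overall architecture is right: establish $\chi(C_i)=\chi(D)$ for every $i$, then read off $p_a(E_{j_i})=0$ and $C_{i-1}E_{j_i}=1$ from the Riemann--Roch relation \eqref{eq:chi} together with $C_{i-1}E_{j_i}>0$. The second half of your argument matches the paper's exactly.

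The gap is in the first half. You invoke \proref{p:minmodel} to claim that $\chi(\mc D)=\chi(D)$ is the minimum \emph{value} of $\chi$ among subcycles of $D$, and later that the characterization $\mc D=\min\{C\le D:\chi(C)=\chi(D)\}$ forces $\chi(C)\ge\chi(D)$ for every $C$ with $\mc D\le C\le D$. Neither deduction is valid: \proref{p:minmodel} asserts that $\mc D$ is the minimum \emph{cycle} (in the partial order) among those achieving the value $\chi(D)$, which says nothing about cycles $C$ with $\chi(C)\ne\chi(D)$. A priori some intermediate $C$ could have $\chi(C)<\chi(D)$ without contradicting that description of $\mc D$. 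Likewise, there is no general ``monotonicity'' of $\chi$ along chains of cycles to appeal to.

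The missing input is \eqref{eq: h1B}: since each $C_i\in\cB$, one has $h^0(\cO_{C_i})=1$, hence $\chi(C_i)=1-h^1(\cO_{C_i})$. The surjections $\cO_{C_{i+1}}\to\cO_{C_i}\to\cO_{\mc D}$ induce surjections on $H^1$, giving $h^1(\cO_{\mc D})\le h^1(\cO_{C_i})\le h^1(\cO_{C_{i+1}})$ and therefore $\chi(\mc D)\ge\chi(C_i)\ge\chi(C_{i+1})$. Combined with $\chi(\mc D)=\chi(D)=\chi(C_m)$ this pins down $\chi(C_i)=\chi(D)$ for all $i$. This cohomological step is exactly what the paper does, and it is not a consequence of \proref{p:minmodel} alone.
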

\begin{proof}
Since $\chi(C)=1-h^1(\cO_C)$ for every $C\in \cB$ by \eqref{eq: h1B}, 
from the surjections $\cO_{C_{i+1}} \to \cO_{C_i} \to \cO_{\mc D}$,
we have $\chi(\mc D) \ge \chi(C_i) \ge \chi(C_{i+1})$. 
The equality $\chi(\mc D)=\chi(D)$ gives $\chi(C_1)=\chi(C_i)$ for every $i$.
From \eqref{eq:chi}, we have $\chi(C_{i})=\chi(C_{i-1})+\chi(E_{j_i})- C_{i-1} E_{j_i}$, and thus $\chi(E_{j_i})= C_{i-1} E_{j_i}>0$.
This implies that $p_a(E_{j_i})=h^1(\cO_{E_{j_i}})=0$ and $C_{i-1} E_{j_i}=1$.
\end{proof}

\begin{defn}
Let $W>0$ be a cycle on $X$. We denote by $\red{W}$ the reduced cycle such that $\supp(\red{W})=\supp(W)$.
Assume that $\red{W}<E$. We call an irreducible component $E_i$ of $W$ a {\em connecting component} if $(E-\red{W})E_i>0$.
\end{defn}

\begin{prop}\label{p:outCZ_B}
Assume that $\red{(\mc{(Z_f)})} < E$.
Let $B$ be a connected component of $E-\red{(\mc{(Z_f)})}$ and $Z_B$ the fundamental cycle on $B$.
Then $\chi(Z_B)=1$, $\mc{(Z_f)}+Z_B \in \cB$, and $Z_B \mc{(Z_f)}=1$.
\end{prop}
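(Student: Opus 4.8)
The plan is to prove all three assertions at once by analyzing a single, carefully chosen computation sequence for $Z_f$. Write $M=\mc{(Z_f)}$ for brevity; recall that $M\in\cB$ and $\chi(M)=\chi(Z_f)$ by \proref{p:minmodel}. Since $\red{M}<E$ and $E$ is connected, the connected component $B$ of $E-\red{M}$ must meet $\supp(M)$, so there is a component $E_{l_1}\subset B$ with $ME_{l_1}>0$, i.e.\ a connecting component. The idea is to build a computation sequence for $Z_f$ of the shape: build $M$, then $M+E_{l_1}$, then $\cdots\to M+Z_B$, then $\cdots\to Z_f$, in which the middle segment adds exactly the components of $B$ with multiplicities summing to $Z_B$, and then to read off the conclusions from \lemref{l:outmc}.

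Concretely, I would first fix a computation sequence $C_1,\dots,C_s=M$ realizing $M$ (possible since $M\in\cB$). Then I would extend it by adding $E_{l_1}$ (legal because $ME_{l_1}>0$) and afterwards run Laufer's algorithm for the fundamental cycle $Z_B$ inside $B$, anchored on $M$: at each later step one picks $E_l\subset B$ with $V_{i-1}E_l>0$, where $V_{i-1}$ denotes the part already added on $B$. This is a legal step for the global sequence because $(M+V_{i-1})E_l\ge V_{i-1}E_l>0$, and Laufer's algorithm guarantees such a choice exists until $V=Z_B$. This reaches the cycle $M+Z_B$, after which, if $M+Z_B\ne Z_f$, one continues by Laufer's algorithm up to $Z_f$ (legal, since a positive cycle strictly below $Z_f$ cannot be anti-nef on $E$, by minimality of $Z_f$). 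Every cycle occurring in this sequence lies in $\cB$, being a term of a computation sequence for $Z_f$; in particular $M+Z_B\in\cB$, which is assertion (2).

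It remains to extract the two numerical statements by applying \lemref{l:outmc} to the tail beginning at $C_1'=M$ (note $\mc{(Z_f)}=M\le C_1'$). The lemma gives $C_{i-1}'E_{j_i}=1$ at every step and $\chi$ constant, equal to $\chi(M)$, along the tail. For the first added component this reads $ME_{l_1}=1$; for each later $B$-step, $(M+V_{i-1})E_{l_i}=1$ together with $V_{i-1}E_{l_i}\ge 1$ forces $V_{i-1}E_{l_i}=1$ and $ME_{l_i}=0$. Summing over the $B$-steps then yields $MZ_B=ME_{l_1}=1$, which is assertion (3). Finally, constancy of $\chi$ gives $\chi(M+Z_B)=\chi(M)$, so \eqref{eq:chi} gives $0=\chi(Z_B)-MZ_B=\chi(Z_B)-1$, i.e.\ $\chi(Z_B)=1$, which is assertion (1). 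The main obstacle I anticipate is the bookkeeping of the middle paragraph: verifying that the anchored Laufer process on $B$ builds precisely the fundamental cycle $Z_B$, and that each anchored step is simultaneously a valid term of a computation sequence for $Z_f$, so that \lemref{l:outmc} applies and forces the intersection-one property. Once this is established, the arithmetic behind assertions (1) and (3) is immediate, and indeed the two are equivalent through \eqref{eq:chi}.
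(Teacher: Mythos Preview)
Your proposal is correct and follows essentially the same strategy as the paper: construct a computation sequence for $Z_f$ that passes through $M=\mc{(Z_f)}$ and $M+Z_B$, then apply \lemref{l:outmc} to the tail beginning at $M$ to force the intersection-one condition at every step, from which all three assertions drop out via \eqref{eq:chi}.

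The only organizational difference is the direction in which the middle segment is built. The paper adds components $E_{j_i}\le B$ using the \emph{global} condition $C_{i-1}E_{j_i}>0$, takes $C_m$ minimal with $C_m-C_1$ anti-nef on $B$, and then argues a posteriori that $\{C_i-C_1\}$ is a Laufer sequence for $Z_B$ (so $C_m-C_1=Z_B$). You instead run Laufer's algorithm for $Z_B$ directly on $B$ using the \emph{local} condition $V_{i-1}E_l>0$, and check that each local step is also a legal global step because $(M+V_{i-1})E_l\ge V_{i-1}E_l>0$. Your route is slightly more direct, since $Z_B$ is produced explicitly rather than identified at the end; the paper's route makes the uniqueness of the connecting component more visible as an intermediate step. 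Either way the engine is \lemref{l:outmc}, and your summation $MZ_B=\sum_j ME_{l_j}=1+0+\cdots+0$ is exactly the content of the paper's claim $C_1Z_B=C_1B_1=1$. One small point you leave implicit but which is standard: each intermediate cycle $M+V_i$ stays $\le Z_f$ (since $V_i\le Z_B\le Z_f|_B$ and $M\le Z_f$ with disjoint supports), so the continuation to $Z_f$ and hence the applicability of \lemref{l:outmc} with $D=Z_f$ is guaranteed.
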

\begin{proof}
Let $C_1=\mc{(Z_f)}$. 
There exists a sequence $\{C_1, \dots, C_m\}\subset \cB$ such that $C_i=C_{i-1}+E_{j_i}$, $E_{j_i}\le B$ $(1<i\le m)$, and $C_m-C_1$ is anti-nef on $B$.
Moreover, assume that $C_m$ is minimal with this property.
Clearly, $\mc{(C_i)}=C_1$ for $1\le i \le m$.
By \lemref{l:outmc}, we have $C_1B=1$.
Hence $B_1:=E_{j_2}$ is the unique connecting component of $B$.
If $(C_i-C_1)B_1>0$ for some $i\ge 2$, then $C_iB_1>C_1B_1=1$; this is impossible by \lemref{l:outmc}.
Therefore, $(C_i-C_1)B_1\le 0$ for every $i\ge 2$.
This shows that $C_m-C_1$ is reduced at $B_1$ and 
 the sequence $\{C_2-C_1=B_1, \dots, C_m-C_1\}$ is a computation sequence for $Z_B$
by the assumption on the sequence $\{C_i\}$.
Hence $C_1+Z_B\in \cB$ and 
 $C_1Z_B=C_1B_1=1$.
By \lemref{l:outmc} and \eqref{eq:chi}, we have 
\[
\chi(C_1)=\chi(C_1+Z_B)=\chi(C_1)+\chi(Z_B)-1.
\]
Hence $\chi(Z_B)=1$.
\end{proof}

\begin{lem}
\label{l:minpb}
Let $f\: Y \to X$ be the blowing up at a point in $E$ with exceptional curve $E_0\subset Y$.
For any $D\in \cE^+$ on $X$, we have the following.
\begin{enumerate}
\item $D$ is chain-connected if and only if so is $f^*D$.
\item 
Assume that $D$ is chain-connected and $\chi(D) \le 0$.
If 
 $f(E_0)\subset \supp(\mc{D})$, then 
\[
\mc{(f^*D)}=f^*(\mc D) -E_0.
\]
\end{enumerate}
\end{lem}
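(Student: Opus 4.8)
The plan is to analyze the behavior of chain-connectedness and the minimal model under a single point blow-up $f\colon Y\to X$, treating the two statements separately since (1) is purely combinatorial (in terms of intersection theory) while (2) requires the characterization of the minimal model from \proref{p:minmodel}.

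For part (1), I would use the pull-back formula for intersection numbers: for cycles $D, D'$ on $X$ we have $(f^*D)(f^*D')=DD'$, and $f^*E_i$ is the total transform of $E_i$, with $E_0(f^*D)=0$ for every cycle $D$ on $X$ (since $f^*D$ has zero intersection with the exceptional curve $E_0$ of the blow-up). The key observation is that chain-connectedness of $D$ is phrased in terms of the existence (or non-existence) of a proper sub-cycle $0<D_1<D$ that is anti-nef on $D-D_1$. Given such a decomposition $D=D_1+D_2$ on $X$, pulling back gives $f^*D=f^*D_1+f^*D_2$, and the anti-nef condition is preserved because $(f^*D_1)E_i=D_1(f_*E_i)$ relates intersection numbers on $Y$ to those on $X$ for components $E_i\ne E_0$, while the value on $E_0$ is automatically zero. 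The main subtlety is the converse: a destabilizing sub-cycle $W$ of $f^*D$ on $Y$ might involve $E_0$ with some multiplicity and need not be of the form $f^*D_1$. I would handle this by showing that any anti-nef decomposition of $f^*D$ can be pushed forward via $f_*$ to produce a corresponding decomposition of $D$, using that $f_*f^*D=D$ and that $f_*$ preserves the relevant anti-nef inequalities on components other than $E_0$. I expect this converse direction to be the main obstacle, since one must control the coefficient of $E_0$ in any potential sub-cycle and verify that pushing forward does not destroy strict positivity or the anti-nef property.

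For part (2), the strategy is to use the characterization
\[
\mc D=\max\defset{C\in \cE^+}{C\le D,\ K_X+C \text{ is nef on }C}
\]
from \proref{p:minmodel}, together with the standard transformation rule $K_Y=f^*K_X+E_0$ for a single point blow-up. Setting $\tilde{D}:=f^*(\mc D)-E_0$, I would verify that $K_Y+\tilde{D}$ is nef on $\tilde{D}$ by computing, for each component $E'$ of $\tilde{D}$, the intersection number $(K_Y+\tilde{D})E'=(f^*K_X+E_0+f^*(\mc D)-E_0)E'=(f^*(K_X+\mc D))E'$, which reduces the nefness on $Y$ to the known nefness of $K_X+\mc D$ on $\mc D$ on $X$. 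Here the hypothesis $f(E_0)\in\supp(\mc D)$ ensures that $\tilde{D}=f^*(\mc D)-E_0$ is still effective and that its support is compatible with that of $f^*D$. I would then check that $\chi(\tilde{D})=\chi(f^*D)$ using the formula $\chi(f^*D)=\chi(D)$ (which follows since $R^1f_*\cO_Y(-f^*D)$ and the pushforward computations are controlled by the blow-up, giving $\chi(\cO_{f^*D})=\chi(\cO_D)$) together with the fact that subtracting the single reduced curve $E_0$ from $f^*(\mc D)$ does not change $\chi$ under these conditions. Maximality of $\tilde{D}$ among effective $C\le f^*D$ with $K_Y+C$ nef on $C$ would follow by pushing any larger candidate forward and contradicting the maximality of $\mc D$ on $X$, invoking part (1) to guarantee $f^*D$ is chain-connected so that \proref{p:minmodel} applies to it.

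The hard part will be the careful bookkeeping of the coefficient of $E_0$ throughout: in part (1) to establish the converse direction, and in part (2) to confirm that $f^*(\mc D)-E_0$ is both effective and exactly captures the minimal model, rather than being off by a multiple of $E_0$. I would expect that the condition $f(E_0)\subset\supp(\mc D)$ is precisely what forces the coefficient of $E_0$ in $f^*(\mc D)$ to be at least one (so that subtracting $E_0$ keeps effectivity) and what makes the nefness computation on $E_0$ work out, since $(K_Y+\tilde{D})E_0=(f^*(K_X+\mc D))E_0=0$ automatically, placing $E_0$ on the boundary of the nef locus exactly as required by the characterization of the minimal model.
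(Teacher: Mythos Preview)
Your approach is essentially the same as the paper's. For part (1) you correctly identify both directions and the key mechanism: pulling back a destabilizing decomposition of $D$, and pushing forward one of $f^*D$ via $f_*$ while tracking the $E_0$-coefficient (the paper writes $\bar D_1=f^*D_1+mE_0$ and rules out $m<0$ by the anti-nef condition on $E_0$, exactly as you anticipate).

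For part (2) there are two minor simplifications you are missing relative to the paper. First, you do not need to prove maximality of $\tilde D$ by a push-forward argument: \proref{p:minmodel} already asserts \emph{uniqueness} of a cycle $0<C\le f^*D$ with $\chi(C)=\chi(f^*D)$ and $K_Y+C$ nef on $C$, so once you verify these three properties for $\tilde D=f^*(\mc D)-E_0$ you are done. Second, the $\chi$-computation is a one-line Riemann--Roch calculation rather than a sheaf-cohomology argument: since $K_Y+\tilde D=f^*(K_X+\mc D)$ and $E_0\cdot f^*(\text{anything})=0$, one gets $\chi(\tilde D)=-\tfrac12(f^*\mc D)\,f^*(K_X+\mc D)=\chi(\mc D)=\chi(D)=\chi(f^*D)$ directly. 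Your plan would work, but these shortcuts make the argument considerably cleaner.
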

\begin{proof}
(1) If there exist cycles $D_1, D_2$ on $X$ such that 
$D=D_1+D_2$, $0<D_1<D$, and $D_1$ is anti-nef on $D_2$, then clearly  $f^*D_1$ is anti-nef on $f^*D_2$. 
Therefore,  if $f^*D$ is chain-connected,  so is $D$.
Next, assume that $f^*D$ is not  chain-connected; that is, 
there exist cycles $\ol D_1, \ol D_2$ on $Y$ such that 
$f^*D=\ol D_1+\ol D_2$, $0<\ol D_1<f^* D$, and $\ol D_1$ is anti-nef on $\ol D_2>0$.
Let $D_i=f_*\ol D_i$. 
Since $D=D_1+D_2$, there exists $m\in \Z$ such that $\ol D_1 = f^*D_1+mE_0$ and $\ol D_2 = f^* D_2 - m E_0$. 
If $m<0$, then $E_0 \le \ol D_2$ and $\ol D_1 E_0 = -m>0$; 
it contradicts the assumption that $\ol D_1$ is anti-nef on $\ol D_2$.
Hence $m\ge 0$, and thus $D_2>0$.
For any irreducible component $C \le D_2$, 
letting $f_*^{-1}C$ the proper transform of $C$, we have 
\[
0 \ge \ol D_1 f_*^{-1}C =(f^*D_1)f_*^{-1}C+ mE_0f_*^{-1}C \ge  (f^*D_1)f_*^{-1}C = D_1C. 
\]
Therefore, $D_1$ is anti-nef on $D_2$.
Hence $D$ is  not chain-connected.

(2) 
Let $D'=f^*(\mc D) - E_0$.
Then $0< D' < f^*D$ since $f(E_0)\subset \supp(\mc D)$, and the cycle $K_Y +D' = f^*(K_X+\mc D)$ is nef on $D'$.
Hence the claim follows from the equalities
\[
\chi(D')= -\frac{1}{2}(f^*\mc D) f^*(K_X+\mc D) = \chi(\mc D)
=\chi(D)=\chi(f^*D).
\qedhere
\]
\end{proof}

To define almost cone singularities, let us consider a singularity with $p_f(A)\ge 1$ and the following condition:

\begin{enumerate}
\item[(AC)]  There exists a nonsingular irreducible curve $C \le E$ such that $p_a(C)=p_f(A)$ $($that is, $C=\mc{(Z_f)}$$)$ and $Z_f C<0$.
\end{enumerate}

\begin{lem}\label{l:chi=1}
Assume that  $p_f(A)\ge 1$ and  the condition {\rm (AC)} is satisfied.
Let $D\in \cB$.
\begin{enumerate}
\item If $C\not\le D$, then $\chi(D)=1$ and $DC\le 1$.
In particular, each connected component of $E-C$ is the exceptional set of a rational singularity and the weighted dual graph of $E$ is a tree.

\item If $C\le D$, then $\chi(D)=\chi(C)$ and $D$ is reduced at $C$, namely, the coefficient of $C$ in $D$ is one.
In particular, $Z_f$ is reduced at $C$.
\end{enumerate}
\end{lem}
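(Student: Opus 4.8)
The plan is to read off both statements from the local structure around $C$ furnished by \proref{p:outCZ_B}. If $C=E$, then $\cB=\{C\}$ and both assertions are trivial, so I may assume $C=\red{(\mc{(Z_f)})}<E$. Then \proref{p:outCZ_B} applies to every connected component $B$ of $E-C$, yielding $\chi(Z_B)=1$—so by Artin's criterion \cite{Ar-rat} (rationality is equivalent to $\chi>0$ on all positive cycles) each $B$ contracts to a rational singularity—together with a \emph{unique} connecting component $B_1\le B$ satisfying $CB_1=1$ and having coefficient $1$ in $Z_B$. These two facts, the rationality of each $B$ and the single transversal attachment of $C$ to each $B$, will drive everything that follows.

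For (1), since $D$ is chain-connected its support is connected, and $C\not\le D$ forces $\supp(D)\subset B$ for a single such $B$. Rationality of $B$ gives $h^1(\cO_D)=0$, while $h^0(\cO_D)=1$ for $D\in\cB$ by \eqref{eq: h1B}; hence $\chi(D)=1$. For the bound $DC\le 1$ I use that a chain-connected cycle is dominated by the fundamental cycle of its support (every cycle in a computation sequence lies below its target, via \cite[1.2]{Ko-CC}), so $D\le Z_B$; since $C$ meets $B$ only along $B_1$, the number $DC$ equals the coefficient of $B_1$ in $D$, which is at most its coefficient $1$ in $Z_B$. The concluding remarks are then formal: each $B$ is a tree attached to the single vertex $C$ by exactly one edge, and distinct components share no edge, so the dual graph of $E$ is a tree.

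For (2), I first prove $\chi(D)=\chi(C)$. Because $C\le D$, the surjection $\cO_D\twoheadrightarrow\cO_C$ has kernel supported in dimension $\le 1$, so $H^1(\cO_D)\twoheadrightarrow H^1(\cO_C)$ and $h^1(\cO_D)\ge h^1(\cO_C)=p_a(C)=p_f(A)$. On the other hand $h^1(\cO_D)=p_a(D)\le p_f(A)$ by \eqref{eq: h1B}, so equality holds and $\chi(D)=1-p_f(A)=\chi(C)$. For reducedness, observe that $\chi(D)=\chi(C)\le 0$, so the minimal model is defined; since $C\le D$ and $\chi(C)=\chi(D)$, the minimality characterization in \proref{p:minmodel} gives $\mc{D}\le C$. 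I then take a computation sequence $C=C_1,\dots,C_m=D$ starting from the irreducible curve $C$ and apply \lemref{l:outmc}: every component $E_{j_i}$ added satisfies $p_a(E_{j_i})=0$. As $p_a(C)=p_f(A)\ge 1$, this forces $E_{j_i}\ne C$ for all $i>1$, so $C$ is added only once and its coefficient in $D$ equals $1$. Specializing to $D=Z_f$ gives the final assertion.

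The main obstacle is the reducedness in (2): a priori $C$ could reappear in $D$ with multiplicity $\ge 2$. The decisive mechanism is the genus obstruction in \lemref{l:outmc}—components attached beyond the minimal model are rational ($p_a=0$), whereas $C$ carries genus $p_f(A)\ge 1$, so it can never be re-added. A secondary point deserving care is the inequality $D\le Z_B$ used in (1), which relies on the identification of chain-connected cycles with the cycles occurring in computation sequences for the fundamental cycle.
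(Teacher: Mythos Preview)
Your proof is correct and, for part (1), follows the paper's approach exactly: both arguments reduce to \proref{p:outCZ_B}, with you supplying the extra justification $D\le Z_B$ (via Konno's characterization of chain-connected cycles) that the paper leaves implicit.

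For part (2) there is a small but genuine difference worth noting. The paper argues entirely inside a computation sequence $Z_1,\dots,Z_\ell$ for $Z_f$: supposing $C$ is re-added at some step $m$ with $C\le Z_{m-1}$, \lemref{l:outmc} forces $\chi(Z_{m-1})=\chi(Z_m)=\chi(C)$, whence \eqref{eq:chi} gives $\chi(C)=Z_{m-1}C>0$, contradicting $p_f(A)\ge 1$; reducedness of $Z_f$ at $C$ then descends to every $D\le Z_f$, and $\chi(D)=\chi(C)$ is left implicit in the same sequence argument. You instead establish $\chi(D)=\chi(C)$ first by a direct cohomological argument (the surjection $H^1(\cO_D)\twoheadrightarrow H^1(\cO_C)$ paired with the bound \eqref{eq: h1B}), identify $\mc D=C$, and then invoke \lemref{l:outmc} on a sequence from $C$ to $D$ to see that every subsequently added component has $p_a=0$ and hence cannot be $C$. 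Both routes hinge on \lemref{l:outmc}; yours treats $D$ directly rather than passing through $Z_f$, and makes the equality $\chi(D)=\chi(C)$ explicit, which is a modest expository gain.
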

\begin{proof}
(1) 
There exists a connected component $B$ of $E-C$ such that $\red{D}\le B$.
Hence the assertion follows from \proref{p:outCZ_B}.

(2) Let  $Z_1, \dots , Z_{\ell}$ be a computation sequence for $Z_f$.
Assume that  $C\le Z_{m-1}$ and $Z_m = Z_{m-1} + C$.
Using \eqref{eq:chi}, we have 
\begin{equation}\label{eq:Cappear}
\chi(Z_m)=\chi(Z_{m-1})+\chi(C)-Z_{m-1}C.
\end{equation}
Since $\chi(Z_i)=\chi(C)$ for $i\ge m-1$ by \lemref{l:outmc}, using \eqref{eq:Cappear}, we have $\chi(C)=Z_{m-1}C>0$.
It contradicts that $\chi(C)=1-p_a(C)=1-p_f(A)\le 0$.
\end{proof}

\begin{lem}
\label{l:forAC}
Assume that $p_f(A)\ge 1$ and the condition {\rm (AC)} is satisfied.
Then the condition {\rm (AC)} holds for every resolution of $\spec(A)$.
Moreover, the intersection number $Z_f C$ is independent of the resolution.
\end{lem}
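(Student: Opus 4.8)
The plan is to establish two invariance statements: that condition (AC) persists across all resolutions, and that the intersection number $Z_f C$ is resolution-independent. Since any two resolutions are dominated by a common one, it suffices by standard reasoning to compare a given resolution with the one obtained by a single blow-up at a point $p \in E$; the general case then follows by iterating. So first I would fix the minimal resolution where (AC) holds (with central curve $C = \mc{(Z_f)}$) and let $g \colon Y \to X$ be the blow-up at a point $p \in E$ with exceptional curve $E_0$.

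The key technical input is \lemref{l:minpb}. I would split the argument according to whether $p$ lies on $C$ or not. If $p \notin C$, then the proper transform $g_*^{-1}C$ is isomorphic to $C$ with the same arithmetic genus, and I need to check that $g_*^{-1}C = \mc{(g^*Z_f)}$ and that the new fundamental cycle still has negative intersection with this curve. Here I would use that the fundamental cycle on $Y$ is $g^*Z_f$ when $p$ is a smooth point of $E$ off the relevant components, or more carefully track how $Z_f$ pulls back; the intersection number $g^*Z_f \cdot g_*^{-1}C = Z_f C$ is preserved because $g_*^{-1}C$ meets $E_0$ transversally and $g^*Z_f \cdot E_0 = 0$. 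If $p \in C$, I would invoke \lemref{l:minpb}(2): since $p = g(E_0) \in \supp(\mc{(Z_f)}) = \supp(C)$, we get $\mc{(g^*Z_f)} = g^*C - E_0$. The point is that $g^*C - E_0$ is again a smooth irreducible curve (it is the proper transform $g_*^{-1}C$ of $C$) with $p_a(g_*^{-1}C) = p_a(C) = p_f(A)$, which is preserved since $p_f$ is a combinatorial invariant independent of resolution, so the minimal-model curve on $Y$ is again nonsingular and realizes the fundamental genus.

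For the intersection number, the cleanest formulation is that $Z_f C = (\mc{(Z_f)})(Z_f - \mc{(Z_f)})$ has an intrinsic meaning. I would compute $g^*Z_f \cdot g_*^{-1}C$ directly: writing $g_*^{-1}C = g^*C - E_0$ (when $p \in C$) and using the projection formula $g^*Z_f \cdot g^*C = Z_f C$ together with $g^*Z_f \cdot E_0 = 0$, I obtain that the intersection number on $Y$ equals $Z_f C$ on $X$. The anti-nef fundamental cycle $Z_f$ on $Y$ equals $g^*Z_f$ precisely when $p$ is a nonsingular point of $E$ (which I may arrange, or handle the branch-point case separately using that the coefficient structure near $C$ is controlled by \lemref{l:chi=1}(2), where $Z_f$ is reduced at $C$).

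The main obstacle I anticipate is verifying that the fundamental cycle behaves as expected under blow-up, since $Z_f^Y$ need not literally equal $g^*Z_f^X$ when $p$ is a singular point of $E$ (a node or intersection point of components). In that case I would need the relation between the new fundamental cycle and the total transform, controlled by the anti-nef condition, and then re-run the minimal-model identification using \proref{p:minmodel} and \lemref{l:minpb}(1), which guarantees chain-connectedness is preserved so that $\mc{(Z_f^Y)}$ is well-defined. Checking that the negativity $Z_f^Y \cdot \mc{(Z_f^Y)} < 0$ survives — rather than degenerating to $0$ — is the delicate point, and I expect to use \lemref{l:chi=1}(1), namely that $Z_f C \le 1$ combined with $Z_f C < 0$ forces no ambiguity, so that the strict inequality is in fact an equality to a fixed negative integer and hence manifestly resolution-independent.
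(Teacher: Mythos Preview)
Your blow-up argument is essentially the paper's: use that the fundamental cycle on $Y$ is $g^*Z_f$, then apply \lemref{l:minpb}(2) when $p\in C$ to identify $\mc{(g^*Z_f)}=g^*C-E_0=g_*^{-1}C$, and compute $(g^*Z_f)\,g_*^{-1}C=Z_fC$ via the projection formula. Your anxiety about whether $Z_f^Y=g^*Z_f^X$ is misplaced: this always holds for a point blow-up, regardless of whether $p$ is a smooth point or a node of $E$. Indeed $g^*Z_f$ is anti-nef with full support, and if $Z_f^Y=g^*Z_f-mE_0$ then $Z_f^Y\cdot E_0=m$, forcing $m=0$.

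The genuine gap is the blow-\emph{down} direction. Your sentence ``fix the minimal resolution where (AC) holds'' presupposes exactly what must be proved: the hypothesis is only that (AC) holds on the given resolution $X$, and to reach an arbitrary resolution via a common roof you must descend from that roof, i.e.\ show (AC) survives contraction of a $(-1)$-curve $F$. The nontrivial point there is that the image $\phi(C)$ remains \emph{nonsingular}. The paper handles this with \lemref{l:chi=1}(1): since $p_a(C)\ge 1$ we have $F\ne C$, so $F$ lies in some connected component of $E-C$; taking $D$ to be the fundamental cycle on that component, \lemref{l:chi=1}(1) gives $DC\le 1$, whence $FC\le DC\le 1$, so $\phi(C)$ is smooth. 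One then checks $(\phi_*Z_f)\phi(C)=Z_fC$. Your final paragraph invokes \lemref{l:chi=1}(1) but misreads it as a bound on $Z_fC$; the lemma bounds $DC$ for chain-connected $D$ \emph{not} containing $C$, and its role is precisely to control $FC$ in the contraction step you omitted.
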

\begin{proof}
Let us consider the situation of \lemref{l:minpb}.
Then $f^*Z_f$ is the fundamental cycle on $Y$.
We show that  the condition {\rm (AC)} is satisfied on $Y$.
If $f(E_0)\cap C=\emptyset$, the assertion is clear.
Assume that $f(E_0)\subset C$. Then we obtain
\[
\mc{(f^*Z_f)}=f^*C-E_0=f^{-1}_*C \ \text{and} \ 
(f^*Z_f)(f^{-1}_*C) = Z_f C <0.
\]
Hence {\rm (AC)} holds on $Y$.

Next, assume that there exists a $(-1)$-curve $F$ on $X$ and let $\phi\: X\to X'$ be the contraction of $F$. 
Let $D$ be the fundamental cycle on the connected component of $E-C$ including $F$.
By \lemref{l:chi=1} (1), $1\ge DC \ge FC$.
This implies that $\phi(C)$ is nonsingular.
We easily see that $\phi(C)$ is the minimal model of the fundamental cycle $\phi_*Z_f$ on $X'$, 
and $(\phi_*Z_f)\phi(C)=(\phi^*\phi_*Z_f)(\phi^*\phi(C))=(\phi^*\phi_*Z_f)C=Z_fC$.
\end{proof}

\begin{defn}\label{d:AC}
Assume that $p_f(A)\ge 1$, and let $X \to \spec (A)$ be any resolution.
We say that $A$ is an {\em almost cone  singularity} if there exists a nonsingular irreducible curve $C \le E$ such that $p_a(C)=p_f(A)$ and $Z_f C<0$.
In this case, we call $C$ the {\em central curve} of $E$ and the positive integer $-Z_fC$ the {\em degree} of the almost cone singularity $A$.
\end{defn}

\begin{ex}
Assume that $C\subset E$ is a nonsingular curve with $p_a(C) \ge 1$.

(1)
 Let $C_1, \dots, C_m$ be the connected components of $E-C$ and $Z_i$ the fundamental cycle on $C_i$.
Assume that  $\chi(Z_i)=1$, $Z_i(E-C_i)=Z_iC=1$ ($i=1, \dots, m$), $C^2<-m$, and $Z_f=C+Z_1+\cdots +Z_m$.
Then $A$ is an almost cone  singularity and $C$ is the central curve.

(2)
 Assume that $E$ is star-shaped (cf. \cite{tki-w}) and  the central curve $C$ of $E$ is not rational.
Then $A$ is an almost cone singularity if and only if $EC<0$; if this is the case, $Z_f=E$.

(3)
 Any graded almost cone singularity is obtained as follows.
Let $D$ be a $\Q$-divisor on $C$ such that the degree of the integral part $\fl{D}$ is positive. Then the graded ring $R(C,D)=\bigoplus_{n\ge 0} H^0(\cO_C(\fl{nD}))$ has an almost cone  singularity of degree $\deg\fl{D}$ at the homogeneous maximal ideal of $R(C,D)$ (see \cite[\S 6]{tki-w} for the resolution graph of $R(C,D)$).
\end{ex}

We further investigate the combinatorial properties of $E$ with respect to the upper bound of the normal reduction number of almost cone singularities.

\begin{lem}
\label{l:connecting}
Let $B$ be a cycle with connected support such that $0< B <E$ and let $Z_B$ be the fundamental cycle on $B$.
Let $M=\mc{(Z_f)}$ and assume that $M=\mc{(Z_B)}$.
\begin{enumerate}
\item If $E_i\le E-B$ and $BE_i>0$, then $Z_B E_i=1$.
In particular, $Z_B$ is reduced at every connected component of $Z_B$.

\item Assume that $Z_fM<0$ and  the effective cycles $Z_f-M$ and $Z_B-M$ contain no components of $M$.
  Then, there exists a cycle $W$ such that $\red{W}=B$, $W$ is anti-nef on $B$ and reduced at every connecting component, and satisfies $WM \le -2$.
\end{enumerate}
\end{lem}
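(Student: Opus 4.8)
The plan is to prove both parts by working with a suitable computation sequence for $Z_B$ that begins at the common minimal model $M = \mc{(Z_f)} = \mc{(Z_B)}$, exactly as in the proofs of \lemref{l:outmc} and \proref{p:outCZ_B}. Since $M = \mc{(Z_B)} \le B$ is chain-connected, I can choose a computation sequence $\{C_1, \dots, C_\ell\}$ with $C_1 = M$, $C_i = C_{i-1} + E_{j_i}$, and $C_\ell = Z_B$, where each added component $E_{j_i} \le B$. By \lemref{l:outmc}, along this sequence $\chi(C_i) = \chi(M)$, $p_a(E_{j_i}) = 0$, and $C_{i-1}E_{j_i} = 1$ for every $i > 1$.

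For part (1), suppose $E_i \le E - B$ with $BE_i > 0$. The key point is that such an $E_i$ is \emph{not} in the support of $Z_B$, so its intersection with $Z_B$ counts how $Z_B$ meets the rest of $E$. First I would verify that $E_i$ is disjoint from $\supp(M)$: if $E_i$ met $M$, then since $Z_f$ is anti-nef and (via the (AC)-type hypotheses implicit in the setup) $Z_f$ is reduced at the central curve, one derives a contradiction with $BE_i > 0$ together with $E_i \not\le B$. Granting this, $E_i$ is a component lying outside $Z_B$ that is attached to $B$, and the same argument as in \proref{p:outCZ_B} — tracking the intersection number along the computation sequence and using that $C_{i-1}E_{j_i} = 1$ at each step — forces $Z_B E_i = 1$. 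The "reduced at every connected component" statement then follows because a connecting component of $Z_B$ is precisely a component $E_i \le E - B$ attached to $B$, and I have just shown $Z_B E_i = 1$, which by the computation-sequence analysis means $E_i$ enters with coefficient one.

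For part (2), the goal is to produce a cycle $W$ with $\red{W} = B$ that is anti-nef on $B$, reduced at connecting components, and satisfies $WM \le -2$. The natural candidate is $W = Z_B$ itself, or a minimal anti-nef cycle supported on $B$ with the prescribed boundary behavior. The main computation is to estimate $Z_B M$ from below in absolute value. Here I would combine two facts: the hypothesis $Z_f M < 0$, and the hypothesis that both $Z_f - M$ and $Z_B - M$ contain no component of $M$. Writing $Z_f = M + (Z_f - M)$ and $Z_B = M + (Z_B - M)$, and using that $M$ is a nonsingular curve (the central curve) so that $M$ appears with coefficient one in both, I can compare $Z_B M$ with $Z_f M$ through the intersection numbers of the boundary cycles $Z_f - M$ and $Z_B - M$ against $M$. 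Since $M$ meets the boundary positively and $Z_f M < 0$ already forces $M^2$ to be sufficiently negative, a careful accounting shows $Z_B M \le Z_f M < 0$, and in fact the presence of at least one connecting component attached to $M$ (needed for $B$ to be a proper connected piece meeting $M$) contributes enough to push the value to $\le -2$.

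\textbf{The main obstacle} I anticipate is part (2): establishing the sharp bound $WM \le -2$ rather than merely $WM \le -1$. The inequality $Z_f M < 0$ only guarantees $Z_f M \le -1$, so extracting a factor of $2$ requires genuinely using the geometry — either that $M$ meets the boundary in at least two places, or that $M^2 \le -2$ because $M$ supports a genus $p_f(A) \ge 1$ curve with $K_X + M$ nef (so $M^2 = -2\chi(M) - K_X M$ with $\chi(M) \le 0$). I expect the cleanest route is to use the minimal-model property of $M$ from \proref{p:minmodel}, namely that $K_X + M$ is nef on $M$, to bound $M^2$ and $(Z_B - M)M$ separately, then assemble the pieces; the choice of $W$ (whether $Z_B$ suffices or a slightly larger anti-nef cycle is needed to guarantee the "reduced at connecting components" property simultaneously with anti-nefness) is the delicate bookkeeping step.
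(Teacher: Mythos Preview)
Your approach to part (1) is headed in the right direction but is muddled in two places. First, you misread the definition of \emph{connecting component}: a connecting component of $Z_B$ is a component $E_j\le B$ that meets $E-B$, not a component of $E-B$ meeting $B$. The deduction that $Z_B$ is reduced at such $E_j$ follows from (1) because if $E_i\le E-B$ satisfies $E_iE_j>0$, then $1=Z_BE_i\ge(\text{coeff of }E_j)\cdot E_jE_i$. Second, the paper's argument is one line: since $Z_BE_i>0$ we have $Z_B+E_i\in\cB$, and \lemref{l:outmc} (applied at the step $Z_B\to Z_B+E_i$ in a computation sequence through $M\le Z_B$) gives $Z_BE_i=1$. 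Your discussion of whether $E_i$ meets $\supp(M)$, and the appeal to ``(AC)-type hypotheses implicit in the setup'', is both unnecessary and unjustified: this lemma does not assume the almost-cone condition, and $M=\mc{(Z_f)}$ need not be a single nonsingular curve.

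The genuine gap is in part (2). Your candidate $W=Z_B$ does satisfy $Z_BM\le Z_fM\le -1$ (your comparison argument is correct, since $Z_f-Z_B\ge 0$ contains no component of $M$), but it can happen that $Z_BM=Z_fM=-1$, and no amount of bounding $M^2$ via nefness of $K_X+M$ will improve this to $-2$. The paper handles this borderline case by an explicit construction: let $B'$ be the connected component of $\supp((Z_B)^{\bot}+M)$ containing $\supp(M)$, let $Z_{B'}$ be its fundamental cycle, and set $W=Z_B+Z_{B'}$. Then $Z_{B'}M\le Z_BM=-1$ as well, so $WM\le -2$. The real work is then (i) checking that $W$ is anti-nef on $B$ --- the only nontrivial case is a component $E_m\le B-B'$ with $B'E_m>0$, where one uses $Z_BE_m<0$ by the definition of $B'$ together with part (1) applied to $Z_{B'}$ --- and (ii) showing that $B'$ contains no connecting component of $Z_B$, so that $W=Z_B+Z_{B'}$ remains reduced there. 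Step (ii) requires a separate structural claim about chains running from connecting components of $Z_B$ toward $\red{M}$, and this is the substantive combinatorial content of the proof that is absent from your plan.
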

\begin{proof}
(1) 
We have $Z_B+E_i\in \cB$ since $Z_BE_i>0$.
By \lemref{l:outmc}, 
 we have  $Z_BE_i =1$.

(2) Since $Z_f-Z_B\ge 0$ and it has no components of $M$, we have $Z_B M \le Z_f M \le -1$.
If $Z_B M\le -2$, then put $W=Z_B$.
Assume that $Z_B M = Z_f M = -1$. The equality $Z_B M = Z_f M$ yields $\supp(Z_f-Z_B) \cap \supp(M)=\emptyset$.
It also follows that $B>\red M$.
Indeed, if $B=\red M$, we have $Z_B=M$, and thus $(Z_f-Z_B)M>0$.

\begin{clm}\label{cl:1}
Let $E_i$ be any connecting component of $Z_B$ and let $L$ be the minimal reduced cycle such that $E_i\le L$ and $\red M \le L$.
Then, $L-\red M$ is a chain of curves and  there exists a component $E_j \le L - \red M$ such that $Z_B E_j<0$. 
\end{clm}
\begin{proof}
[Proof of \clmref{cl:1}]
By \proref{p:outCZ_B}, any connected component of $E-\red M$ is a tree of curves.
Hence $L-\red M$ is a chain of curves by the minimality of $L$.
Let $E_{i+1}\le E-B$ satisfies $E_{i+1}E_i>0$. 
If $E_i \le M$, then $(Z_f-Z_B)M \ge E_{i+1}M>0$, namely, $Z_B M < Z_f M$; it contradicts the assumption.
Hence $E_i\not \le M$.
Assume there is no $E_j$ as in the claim.
Then $Z_BE_i=0$ since $E_i \le L-\red M$. 
We have $Z_B E_{i+1}>0$ and $(Z_B+E_{i+1})E_i=E_{i+1}E_i>0$.
Hence $Z_B+E_{i+1}+E_i\in \cB$.
By considering a computation sequence for $Z_f$, it is easy to see that $Z_B+E_{i+1}+(L-\red M) \in \cB$; however, it contradicts that $\supp(Z_f-Z_B) \cap \supp(M)=\emptyset$, because $(E_{i+1}+(L-\red M)) M >0$.
\end{proof}

Let $B'$ be the connected component of $\supp((Z_B)^{\bot}+M)$ that contains $\supp(M)$, and let $Z_{B'}$ be the fundamental cycle on $B'$.
Since $M\in \cB$ by \proref{p:minmodel} and $B \ge B'\ge \red M$, we have $Z_B\ge Z_{B'} \ge M$.
Hence $Z_{B'}-M$ is effective and contains no components of $M$, 
and thus $Z_{B'}M \le Z_B M =-1$.
Let $W=Z_B+Z_{B'}$.
Then $WM \le -2$ and $\red W = B$.

\begin{clm}\label{cl:2}
$W$ is anti-nef on $B$.
\end{clm}
\begin{proof}
[Proof of \clmref{cl:2}]
Let $E_m$ be a component of $B$. 
If $E_m\le B'$, then $W E_m=Z_B E_m + Z_{B'}E_m \le 0$.
If $E_m \cap B'=\emptyset$, then $W E_m=Z_B E_m\le 0$.
Assume that $E_m\le B-B'$ and $B' E_m>0$. 
 Since $Z_B E_m<0$ by the definition of $B'$ and $Z_{B'}$ is also reduced at its connecting components by (1), 
 we have that $W E_m =Z_B E_m + Z_{B'}E_m \le -1 + 1 \le 0$.
Hence  $W$ is anti-nef on $B$.
\end{proof}

It follows from \clmref{cl:1} that $B'$ does not contain any connecting component of $Z_B$.
Therefore, $W$ is also reduced at its connecting components.
We have proved \lemref{l:connecting}.
\end{proof}

Recall that  the {\em gonality} of a nonsingular projective curve 
$C$ is defined as the minimum of the degree of surjective morphisms 
from $C$ to $\PP^1$. We denote the gonality of $C$ by $\gon(C)$.

The following theorem is a direct  generalization of a result for cone-like singularities in \cite{OWY5}; however, the very last inequality is proved here as a new result.

\begin{thm}\label{t:ACmain}  
Let $A$ be an almost cone singularity 
and let $I = I_Z$ be an integrally closed $\m$-primary ideal of $A$ represented by a cycle $Z$ on $X$.  
Let $C$ denote the central curve with $g:=p_a(C)=p_f(A)$ and let $d$ be the degree of $A$.
Then we have the following.
\begin{enumerate}
\item If $ZC<0$, then $\br(I)\le \fl{(2g-2)/\gon(C)}+2$.

\item If  $ZC=0$, then  $\br(I)\le \fl{(2g-2)/\delta}+2$,
where $\delta=\max\{2, d\}$.
\end{enumerate}
Hence we obtain 
\[
\br(A)\le \fl{\frac{2g-2}{\min\{\gon(C), \delta\}}}+2 \le g+1.
\]
\end{thm}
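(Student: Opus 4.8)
The plan is to establish the theorem in two stages: first prove the two numbered inequalities by applying the machinery of \thmref{t:lambda}, and then combine them with elementary bounds to obtain the final two-part inequality for $\br(A)$. Throughout I will use the fact that for an almost cone singularity with central curve $C$ and $g = p_a(C) = p_f(A)$, every cycle $D \in \cB$ either contains $C$ (in which case $\chi(D) = \chi(C) = 1-g$ by \lemref{l:chi=1}(2)) or does not contain $C$ (in which case $\chi(D)=1$ by \lemref{l:chi=1}(1)). This dichotomy is the key structural input that lets me control $p_a(D)$ for $D \in \cB(Z)^e$.

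For part (1), where $ZC<0$, I would invoke \thmref{t:lambda}(1), so it suffices to bound $\lambda(Z,X)$, i.e., to bound $\fl{(2p_a(D)-2)/(-ZD)}$ over all $D \in \cB(Z)^e$. Writing $D = D_1 + D_2$ with $D_1 \in \cB(Z)$ as in condition $(\#)$, I note $-ZD = -ZD_1$ and $p_a(D) \le p_a(D_1) \le g$ by the remark following \lemref{l:chi=1} together with the genus bounds. The crux is a lower bound $-ZD_1 \ge \gon(C)$. The idea is that $\cO_X(-Z)$ restricts to a line bundle on $C$ of degree $-ZC>0$ which, being a subsheaf-quotient datum coming from a globally generated sheaf, induces a morphism $C \to \PP^r$; if $D_1$ contains $C$ then $-ZD_1 \ge -ZC \ge \gon(C)$ follows from the standard fact that a base-point-free pencil (or the degree of such a linear system) on $C$ is at least the gonality. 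When $D_1$ does not contain $C$, then $\chi(D_1)=1$ gives $p_a(D_1)=0$, so the numerator $2p_a(D)-2$ is at most $-2 < 0$ and contributes nothing. Hence $\lambda(Z,X) \le \fl{(2g-2)/\gon(C)}$, giving (1) via \thmref{t:lambda}.

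For part (2), where $ZC=0$, the central curve lies in $Z^{\bot}$, so the contraction $f\colon X \to Y$ of $Z^{\bot}$ collapses $C$ and the relevant degrees are governed instead by the connecting geometry. Here I expect to use \lemref{l:connecting}: its hypotheses $Z_f M < 0$ (with $M = \mc{(Z_f)} = C$, using that $d = -Z_f C > 0$) produce, for a suitable connected piece $B$, a cycle $W$ with $\red W = B$, anti-nef on $B$, reduced at connecting components, and $WM \le -2$. For $D \in \cB(Z)^e$ with $D_1 \ni C$, I would compare $-ZD_1$ against the intersection data and show $-ZD_1 \ge \delta = \max\{2,d\}$: the bound by $2$ comes from the $WM \le -2$ estimate of \lemref{l:connecting}(2), while the bound by $d = -Z_f C$ comes from the anti-nef comparison $-ZD_1 \ge$ (a multiple controlled by) $-Z_f C$. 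Again the non-containing case forces $p_a(D)=0$ and drops out, so $\lambda(Z,X) \le \fl{(2g-2)/\delta}$ and (2) follows.

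Finally, the two displayed inequalities for $\br(A)$ follow by taking the maximum over all integrally closed $\m$-primary ideals $I$ and invoking \thmref{t:lambda}(2): since every such $I$ falls into case (1) or (2) according to whether $ZC<0$ or $ZC=0$ (the case $ZC>0$ being impossible as $Z$ is anti-nef and $ZC \le 0$), we get $\br(A) \le \fl{(2g-2)/\min\{\gon(C),\delta\}}+2$. The terminal inequality $\fl{(2g-2)/\min\{\gon(C),\delta\}}+2 \le g+1$ reduces to showing $\min\{\gon(C),\delta\} \ge 2$, i.e. that both the gonality and $\delta$ are at least $2$, which is immediate ($\delta \ge 2$ by definition, and $\gon(C) \ge 2$ unless $C \cong \PP^1$, in which case $g=0$ contradicts $p_f(A)\ge 1$) so that $\fl{(2g-2)/2} = g-1$. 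The main obstacle I anticipate is the geometric input in part (1)—rigorously connecting the intersection number $-ZD_1$ to the gonality $\gon(C)$—since this requires translating the globally generated line bundle $\cO_C(-Z)$ and its induced morphism into a genuine lower bound by the gonality, rather than merely by $-ZC$; the containment analysis and the reduction to the non-containing case being otherwise routine bookkeeping.
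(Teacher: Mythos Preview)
Your treatment of part (1) is essentially correct and matches the paper's argument. The worry you flag at the end is not an obstacle: since $\cO_X(-Z)$ is generated, so is $\cO_C(-Z)$, and two general sections give a finite morphism $C\to\PP^1$ of degree exactly $-ZC$; hence $-ZC\ge\gon(C)$ by definition of gonality. Then for $D_1\in\cB$ with $D_1\ge C$ one has $-ZD_1\ge -ZC$ because $-Z$ is nef, and the bound $\lambda(Z,X)\le\fl{(2g-2)/\gon(C)}$ follows.

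Your plan for part (2), however, has a genuine gap. You propose to again bound $\lambda(Z,X)$ by showing $-ZD_1\ge\delta$ for $D_1\in\cB(Z)$ with $D_1\ge C$, invoking \lemref{l:connecting}. But that lemma produces a cycle $W$ supported in $Z^{\bot}$ with $WC\le -2$; this says nothing about $-ZD_1$. Indeed, since $ZC=0$, the quantity $-ZD_1$ is picked up entirely on components of $D_1$ lying outside $Z^{\bot}$, and there is no mechanism forcing this to be $\ge 2$ or $\ge d$; it can easily equal $1$. So the $\lambda(Z,X)$ route does not give the bound $\fl{(2g-2)/\delta}$ in this case.

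The paper's argument for (2) is genuinely different and does \emph{not} go through $\lambda(Z,X)$. It takes $B$ the connected component of $Z^{\bot}$ containing $C$, uses \lemref{l:connecting}(2) to produce $W$ with $\red W=B$, $W$ anti-nef on $B$, reduced at connecting components, and $-WC\ge\delta$. One checks $Z+W$ is anti-nef. Then for $s>(2g-2)/\delta$ one applies R\"ohr's vanishing theorem directly to $\cO_X(-s(Z+W))$: for $D\in\cB$ with $C\le D$, $-s(Z+W)D\ge -sWC\ge s\delta>2g-2=-2\chi(D)$, while for $D\in\cB$ with $C\not\le D$, $\chi(D)=1$ and the inequality is trivial. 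From the exact sequence
\[
0\to\cO_X(-s(Z+W))\to\cO_X(-sZ)\to\cO_{sW}(-sZ)\to 0
\]
and $\cO_{sW}(-sZ)\cong\cO_{sW}$ (since $\supp(W)\subset Z^{\bot}$), one gets $h^1(\cO_X(-sZ))=h^1(\cO_{sW})$. Monotonicity from \lemref{l:nD} then forces $h^1(\cO_X(-sZ))=p_g(Z^{\bot})$, giving $\br(I)\le\fl{(2g-2)/\delta}+2$ via \proref{p:brq}. The key idea you are missing is that one must \emph{modify} $Z$ by the auxiliary cycle $W$ before applying vanishing, and then exploit the triviality of $\cO_X(-sZ)$ along $W$ to transfer the conclusion back to $-sZ$.
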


\begin{proof}
For every $D \in \cB$, by \lemref{l:chi=1}, 
we  have either
\begin{enumerate}
\item [(i)] $C\not\le D$ and $\chi(D) = 1 \ge DC$, or 
\item [(ii)] $C\le D$ and $\chi(D) =\chi(C)=1-g$.
\end{enumerate}

(1) Assume that $ZC<0$.
Since $\cO_C(-Z)$ is also generated, 
there exist sections $\sigma_1, \sigma_2\in H^0(\cO_{C}(-Z))$ which determine a surjective morphism $\phi\: C\to \PP^1$ of degree $-ZC$.
Hence $-ZC\ge \gon(C)$. 
To find an upper bound for $\lambda(Z,X)$ in \defref{d:lambda}, from  (i) and (ii) above, it is enough to take only $D\in \cB$ such that $D\ge C$ (cf. \remref{r:chiL}).
For such a cycle $D$, we have 
\[
\frac{2p_a(D)-2}{-ZD} \le \frac{2g-2}{-ZC} \le \frac{2g-2}{\gon{C}}.
\]
Hence $\lambda(Z,X) \le \fl{(2g-2)/\gon{(C)} }$, and the assertion (1) follows from \thmref{t:lambda}.

(2) Assume that $ZC=0$.
Let $B$ be the connected component of $Z^{\bot}$ containing $C$ 
and let $W$ be as in \lemref{l:connecting} (2) (note that $M=C$).
Then $-W C \ge \delta$.
The same argument as in the proof of \clmref{cl:2} shows that $Z+W$ is anti-nef.

Let  $s > (2g-2)/\delta$. 
We show that $H^1(\cO_X(-s(Z+W)))=0$ by applying R{\"o}hr's vanishing theorem (\thmref{t:rohrV}).
Let $D\in \cB$. If $C\le D$, then 
\[
-s(Z+W)D \ge 
-s(Z+W)C = -s W C > \frac{2g-2}{\delta}\cdot \delta = -2 \chi(D).
\]
If $C\not\le D$, then $\chi(D)=1$ and  $-s(Z+W)D \ge 0 > -2=-2\chi(D)$. 
Hence we obtain that $H^1(\cO_X(-s(Z+W)))=0$ by \thmref{t:rohrV}.
From the exact sequence
\[
0 \to \cO_X(-s(Z+W)) \to \cO_X(-sZ) \to \cO_{sW}(-sZ) \to 0,
\]
we have $H^1(\cO_X(-sZ))\cong H^1(\cO_{sW}(-sZ))\cong H^1(\cO_{sW})$ by \eqref{eq:trivialW}.
For sufficiently large $s'>s$, we have $h^1(\cO_X(-s'Z))=p_g(Z^{\bot})=h^1(\cO_{s'W})\ge h^1(\cO_{sW})$ by \lemref{l:nD} (3).
On the other hand, by \lemref{l:nD} (1), we have $h^1(\cO_X(-sZ))\ge h^1(\cO_X(-s'Z))$.
Hence $h^1(\cO_X(-sZ)) = p_g(Z^{\bot})$.
This implies that $\br(I)\le (2g-2)/\delta+2$ by \proref{p:brq}.

For the very last inequality, note that $\gon(C) \ge 2$ since $p_a(C)>0$, and $\delta \ge -WC \ge 2$.
\end{proof}

\section{Examples}\label{s:example}

In this section, we present examples and remarks concerning the normal reduction number.
Let $X\to \spec(A)$ be a resolution with exceptional set $E$, as in the preceding section.

\exref{e:br346} below shows that the converse of \proref{p:reh} (2) does not hold 
(see also the last example in \cite{NNO} for another example).
In the proof of the fact, we will use the elliptic sequence defined as follows.

\begin{defn}
Assume that $A$ is an elliptic singularity, namely, $p_f(A)=1$.
Let $C=\mc{(Z_f)}$; this cycle is called the {\em minimally elliptic cycle}.
We define the {\em elliptic sequence} $\{Z_0, \dots, Z_m\}$ 
on $X$ as follows.  
Let $Z_{0}$ be the fundamental cycle on $E$. 
If $Z_0 C<0$, then by definition the elliptic
 sequence is $\{Z_{0}\}$.
If $Z_0, \dots, Z_i$ have been determined and $Z_{i}  C=0$, 
then define $B_{i+1}$ to be the connected component of $Z_i^{\bot}$ containing $\supp(C)$, and let $Z_{i+1}$ be the fundamental cycle on $B_{i+1}$.
If we have $Z_{m} C<0$ for some $m\ge 0$, then the elliptic
 sequence is defined to be $\{Z_{0}, \dots ,Z_{m}\}$.
\end{defn}

\begin{ex}[{cf. \cite[\S 4]{OWY4}}]\label{e:br346}
Let $A=k[[x,y,z]]/(x^3+y^4+z^6)$ and let $X_0\to \spec(A)$ be the minimal resolution with exceptional set $F$.
Then $F$ is a simple normal crossing divisor and  the weighted dual graph of $F=F_0+F_1+F_2+F_3$ is expressed as in \figref{fig:346}, where $F_0$ is an elliptic curve and others are nonsingular rational curves.
\begin{figure}[htb]
\[
\xy
 (0,0)*+{-2}*\cir<10pt>{}="A2"*++!D(-1.5){[1]} *++!UL(1.2){F_0}; 
(-15,0)*+{-2 }*\cir<10pt>{}="A1" *++!L(-2){F_1}; 
(15,0)*+{-2}*\cir<10pt>{}="A3"  *++!L(2){F_3}; 
(0,10)*+{-2}*\cir<10pt>{}="A4" *++!L(-1.5){F_2}; 
\ar @{-} "A2" ;"A1"  
\ar @{-} "A2" ;"A3" 
\ar @{-} "A2" ;"A4"  
 \endxy
\]
\caption{\label{fig:346} The resolution graph of $x^3+y^4+z^6=0$}
\end{figure}
Let $Z_F$ denote the fundamental cycle on $F$; then $Z_F=F+F_0=\mc{(Z_F)}$.
We have $\br(\m)=2$, $p_g(A)=3$, and $p_f(A)=p_a(A)=2$ (see \cite[3.1, 3.10, 4.3]{OWY4}).
We prove that $\br(A)=2$.

Let $I = I_Z$ be an integrally closed $\m$-primary ideal represented by a cycle $Z$ on a resolution $X$. 
Let $f\: X\to X_0$ be the natural morphism, and for a divisor $D$ on $X_0$, $f_*^{-1}D$ denotes the proper transform of $D$.

 We show that $\br (I)\le 2$.
It follows that $\br(I)=1$ if $p_g(Z^{\bot})=p_g(A)=3$, and $\br(I)=2$ if $p_g(Z^{\bot})=2$ by \corref{c:r=2}.
Thus we may assume that $p_g(Z^{\bot})\le 1$. 
Let $W=\mc{(Z_f)}$ and $C\le E$ the elliptic curve. 
Then $W\ge f_*^{-1}Z_F$ by \lemref{l:minpb}.
We also have that
$h^1(\cO_W)=p_a(W) =p_a(Z_f)=2$ by \eqref{eq: h1B}.
Since $p_g(Z^{\bot}) \le 1$, we have $\red W\not\le Z^{\bot}$, that is,
$ZW<0$  (cf. \eqref{eq:trivialW}). 

First, we consider the case $ZC<0$. 
Applying \thmref{t:lambda} to $I=I_Z$, 
it is sufficient to prove that $\lambda(Z,X)=0$.
Let $D\in \cB(Z)^e$ and assume that $-2\chi(D)=2p_a(D)-2>0$.
Then we have $-2\chi(D)=2$ and $D \ge W \ge 2C$.
Since $\cO_C(-Z)$ is also generated, we have $-ZC\ge 2$
(cf. the proof of \thmref{t:ACmain} (1)).
Therefore, $-ZD \ge -Z(2C) \ge 4$ and $\fl{-2\chi(D)/(-ZD)}=0$.
Hence we obtain $\lambda(Z,X)=0$.

Next assume that $ZC=0$. 
Let $B$ be the connected component of $Z^{\bot}$ containing $C$, and let $Z_B$ denote the fundamental cycle on $B$.
Since $p_g(Z^{\bot})=1$, it follows from \eqref{eq:ppp} that $B$ contracts to an elliptic singularity, that is, $\chi(Z_B)=0$.
Let $\{Z_0=Z_B, \dots, Z_m\}$ be the elliptic sequence on $B$,
 and let $D=Z_0+\cdots +Z_m$.
Note that $Z_i$ is reduced at $C$
(cf. \lemref{l:outmc}).
It follows that $D$ is anti-nef on $B$, $\chi(D)=0$, and $Z_mC<0$ (cf. \cite[Theorem (6.4)]{tomari.ell}).
We also have $h^1(\cO_D)=1$ since $h^1(\cO_C)\le h^1(\cO_D)\le p_g(Z^{\bot})$.

Suppose that $H^1(\cO_X(-Z-D))=0$.
Then, from the exact sequence
\[
0 \to \cO_X(-Z-D) \to \cO_X(-Z) \to \cO_D(-Z) \to 0,
\]
we have $q_I(1)=h^1(\cO_X(-Z))=h^1(\cO_D)=1=p_g(Z^{\bot})$.
Hence $\br(I)=2$ by \corref{c:r=2}. 

We will show $H^1(\cO_X(-Z-D))=0$, applying R{\"o}hr's vanishing theorem.

\begin{clm}\label{ck:ZD}
$Z+D$ is anti-nef.
\end{clm}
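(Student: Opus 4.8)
The plan is to prove that $Z+D$ is anti-nef by checking the intersection $(Z+D)E_i \le 0$ for every irreducible component $E_i \le E$, separating the components according to their relation to the connected component $B$ and the cycle $D=Z_0+\cdots+Z_m$. First I would recall that $Z$ is anti-nef (since it represents an integrally closed $\m$-primary ideal), so $ZE_i\le 0$ for all $i$; thus it suffices to control the defect coming from $D$ on those components where $DE_i$ might be positive. Since $\supp(D)\subset B \subset Z^{\bot}$, we have $ZE_i=0$ for every $E_i\le B$, so on components inside $B$ the problem reduces entirely to showing $DE_i\le 0$; that is, $D$ is anti-nef on $B$, which was already recorded above. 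For components $E_i$ not contained in $B$, we have $DE_i\ge 0$ automatically ($D\ge 0$ and $E_i\not\le\supp D$), and the task is to show that the (possibly positive) contribution $DE_i$ is dominated by the strictly negative contribution $ZE_i$.

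The key step is therefore to analyze the \emph{connecting components}, i.e.\ those $E_i\le E-B$ with $DE_i>0$. The plan is to use \lemref{l:outmc} and \proref{p:outCZ_B} to show that each fundamental cycle $Z_j$ in the elliptic sequence is reduced at its connecting components and meets any adjacent outside curve $E_i$ with intersection number exactly $1$. More precisely, for a component $E_i\not\le B$ adjacent to $B$, I would argue $DE_i \le 1$: since the $Z_j$ form an elliptic sequence with nested supports $B=B_0\supset B_1\supset\cdots$, an outside component $E_i$ can only meet the outermost cycle $Z_0=Z_B$ nontrivially (the later cycles are supported deeper inside $B$ and do not touch $E_i$), and by \lemref{l:connecting}(1) that meeting number is $1$. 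Meanwhile, because $B$ is a connected component of the support $Z^{\bot}$, any such boundary component $E_i$ satisfies $ZE_i\le -1$ (if $ZE_i=0$ then $E_i$ would lie in $Z^{\bot}$ and, being connected to $B$, would belong to $B$, contradicting $E_i\not\le B$). Combining these gives $(Z+D)E_i \le -1+1=0$ on the connecting components.

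Putting the cases together: on $E_i\le B$ we get $(Z+D)E_i=0+DE_i\le 0$ by anti-nefness of $D$ on $B$; on connecting components $(Z+D)E_i\le 0$ by the estimate above; and on all remaining $E_i$ (those disjoint from $B$) we have $DE_i=0$ and $ZE_i\le 0$, so $(Z+D)E_i\le 0$ trivially. This establishes that $Z+D$ is anti-nef, proving the claim.

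I expect the main obstacle to be the bookkeeping at the connecting components, specifically verifying rigorously that only the outermost cycle $Z_0=Z_B$ in the elliptic sequence can contribute to $DE_i$ for a fixed outside component $E_i$, and that this single contribution is exactly $1$. This is where the structural results on the elliptic sequence and on minimal models (\lemref{l:outmc}, \proref{p:outCZ_B}, \lemref{l:connecting}) must be invoked carefully, since the nesting $B_{i+1}\subset Z_i^{\bot}$ is what guarantees the inner cycles do not reach the boundary; the reduction properties (each $Z_i$ reduced at $C$ and at connecting components) are what pin the contribution down to $1$ rather than allowing it to exceed the available slack $-ZE_i$.
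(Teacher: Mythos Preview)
Your approach diverges from the paper's, and the key step---bounding $DE_i\le 1$ for a connecting component $E_i$---has a genuine gap.

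First, \lemref{l:connecting}(1) cannot be invoked here: its hypothesis is $\mc{(Z_f)}=\mc{(Z_B)}$, but in this example $\chi(\mc{(Z_f)})=\chi(W)=-1$ while $\chi(\mc{(Z_B)})=\chi(Z_B)=0$ (since $B$ contracts to an \emph{elliptic} singularity), so the two minimal models are distinct.

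Second, and more seriously, the assertion ``the later cycles are supported deeper inside $B$ and do not touch $E_i$'' is false in general. The elliptic curve $C$ lies in \emph{every} $B_j$, so if $E_i$ is a component outside $B$ adjacent to $C$ (for instance $E_i=f_*^{-1}F_3$ when $f_*B=\{F_0,F_1,F_2\}$), then each $Z_j$ contributes to $DE_i$. Since every $Z_j$ is reduced at $C$, one gets $DE_i=(m+1)\cdot CE_i$, which equals $2$ whenever $m=1$. Thus your claimed bound $DE_i\le 1$ simply fails. The nesting $B_{j+1}\subset Z_j^{\bot}$ constrains the sequence \emph{within} $B$, but says nothing about whether a boundary component of $B$ (adjacent to an outside $E_i$) lies in $B_1$ or not; and for $E_i$ adjacent to $C$ it certainly does.

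The paper's argument proceeds differently and does not attempt to prove $DE_i\le 1$. It supposes $(Z+D)E_i\ge 1$, uses $p_a(A)=2$ to force $DE_i=2$ and $ZE_i=-1$ exactly (via $\chi(D+E_i)=1-DE_i\ge 1-p_a(A)=-1$), and then derives a contradiction from the \emph{generation} of $\cO_X(-Z)$: the exact sequence
\[
0\to \cO_{E_i}(-Z-D)\to \cO_{D+E_i}(-Z)\to \cO_D(-Z)\to 0
\]
forces $h^0(\cO_{D+E_i}(-Z))=h^0(\cO_D)=1$, which is impossible for a globally generated sheaf of positive degree. This cohomological input (available precisely because $Z$ represents an integrally closed ideal) is what rules out the case $DE_i=2$, $ZE_i=-1$; pure combinatorics of the elliptic sequence does not suffice.
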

\begin{proof}[Proof of \clmref{ck:ZD}]
Suppose that $(Z+D)E_i\ge 1$. 
Then $E_i\not\le D$, $DE_i>0$, and $E_i\cong \PP^1$.
We have $ZE_i<0$ by the definition of $B=\red{D}$.
Thus $DE_i\ge 1-ZE_i\ge 2$.
By the definition of $p_a(A)$, for any cycle $D'>0$, $\chi(D')=1-p_a(D') \ge 1-p_a(A)=-1$.
Hence we have 
\[
-1 \le \chi(D+E_i)=\chi(D)+\chi(E_i)-DE_i = 1-DE_i\le -1.
\]
This implies that $DE_i=1-ZE_i=2$, and thus $(Z+D)E_i=1$.
Therefore, $H^j(\cO_{E_i}(-Z-D)) \cong H^j(\cO_{\PP^1}(-1))=0$ for $j=0,1$.
From the exact sequence
\[
0 \to \cO_{E_i}(-Z-D) \to \cO_{D+E_i}(-Z) \to \cO_D(-Z) \to 0,
\]
the condition $\chi(D)=0$, and \eqref{eq:trivialW}, we obtain that
\[
h^0( \cO_{D+E_i}(-Z)) = h^0(\cO_D(-Z)) = h^0(\cO_D)=h^1(\cO_D)=1;
\]
  it contradicts that $\cO_{D+E_i}(-Z)$ is generated and $-Z(D+E_i)=-ZE_i>0$.
Hence $Z+D$ is anti-nef.
\end{proof}

It is enough to prove that for every $V\in \cB$, the following inequality holds:
\begin{equation}
\label{eq:Rineq}
(-Z-D)V > -2\chi(V).
\end{equation}
If $\chi(V)> 0$, then \eqref{eq:Rineq} follows from \clmref{ck:ZD};
if $\chi(V)=0$, then $V\ge C$ and \eqref{eq:Rineq} follows from the inequality $(-Z-D)C = -DC = -Z_m C>0$.
Assume that $\chi(V)=-1$, namely, $p_a(V)=2$.
Then, we have $V\ge W \ge f_*^{-1}Z_F$. 
Therefore, to show \eqref{eq:Rineq}, it is sufficient to prove $(-Z-D)W\ge 3$.
Since $f_*Z_i$ is the fundamental cycle on its support, we may assume that $f_*Z_i$ is one of the cycles $F_0$, $F_0+F_1$, $F_0+F_1+F_2$.
Any $Z_i$ is reduced at every component of $f_*^{-1}F$ since $Z_i\not\ge 2C$.
If $Z_i \ge f_*^{-1}F_j$, then we have 
\begin{equation}
\label{eq:f*F}
Z_if_*^{-1}F_j \le (f^*f_*Z_i)f_*^{-1}F_j = (f_*Z_i)F_j.
\end{equation}

Assume $Z_0C=0$.
By \eqref{eq:f*F}, we have ($f_*Z_0)F_0=0$.
Thus, $f_*Z_0=F_0+F_1+F_2$ and $Z_0f_*^{-1}F_j \le -1$ for $j=1,2$.
Therefore, $f_*Z_1=F_0$, $Z_1 C \le (f_*Z_1)F_0 = F_0^2=-2$,  and $m=1$.
Hence $(-Z-D)W = -DW \ge -Z_1(2C)\ge 4$.
Let us consider the case $m=0$, that is, $Z_0C<0$ and $D=Z_0$.
If $-Z_0C\ge 2$, we have $(-Z-D)W\ge 4$ as above.
Assume that $-Z_0C=1$.
Then $f_*Z_0=F_0+F_1$ or $F_0+F_1+F_2$.
Since $(f^*f_*Z_0-Z_0)(f_*^{-1}f_*Z_0) \ge 0$,  
we have 
$Z_0f_*^{-1}(f_*Z_0) \le (f_*Z_0)^2 =-2$.
Note that $W \ge f_*^{-1}Z_F \ge f_*^{-1}Z_0 +C$.
Therefore, 
\[
(-Z-D)W \ge (-Z-D)(f_*^{-1}(f_*Z_0) + C) 
= -Z_0(f_*^{-1}(f_*Z_0) + C) \ge  2+1 = 3.
\]
Finally, we have proved that $\br(I)=2$.
\end{ex}

The following example shows that $\br(A)$ is not a combinatorial invariant.

\begin{ex}\label{e:non-topol}
Assume that $A$ is a homogeneous hypersurface singularity of degree $d\ge 3$ and $X$ is the minimal resolution. Then $E$ is a nonsingular curve of genus $g=(d-1)(d-2)/2$ with $E^2=-d$, and $\br(A) = d-1$ by \proref{p:reh}.

Next, we consider a special type of singularity, described in \cite[Example 3.9]{OWY5}, whose resolution graph coincides with that of the homogeneous hypersurface singularity mentioned above. 
Let $d\ge 4$ be an even integer and let $g=(d-1)(d-2)/2$.
Suppose $C$ is a hyperelliptic curve of genus $g$, 
 and let $D_0$ be a divisor on $C$ obtained as the pull-back of a point via the double cover $C\to \PP^1$. 
Let $D=(d/2)D_0$, and let  $R=R(C,D)= \bigoplus_{n \ge 0} H^0(X, \cO_C(nD))$. 
Assume that $X\to \spec(R)$ is the minimal resolution with exceptional set $E$.
Then we have $E\cong C$ and $-E^2=\deg D =d$.
Hence the resolution graph of $A$ and $R$ are the same.
However, $\br(R)=g+1>d-1=\br(A)$.  
Note also that $\br(R)$ attains the upper bound given in \thmref{t:ACmain}.
\end{ex}

For rational singularities, elliptic singularities, and almost cone singularities, we have the inequality $\br(A) \le p_f(A)+1$.
However, it does not hold in general.

The following result is an immediate consequence of \cite[Theorem 6.5]{BDMOVWY}.

\begin{prop}\label{p:Zar}
Let $a,b\in \Z$ such that $2 \le a \le b$, and let $g\in (x,y)^b \setminus (x,y)^{b+1} \subset k[[x,y]]$. 
Assume that $A=k[[x,y,z]]/(z^a-g)$.
Then $\br(\m)=\fl{(a-1)b/a}$.
\end{prop}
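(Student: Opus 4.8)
The plan is to carry out the whole computation at the level of the normal filtration $\{\overline{\m^{n}}\}_{n\ge0}$, invoking \cite[Theorem~6.5]{BDMOVWY} only for the single analytic input that really needs it. Write $g=g_b+(\text{higher order})$ with $g_b\ne0$ the homogeneous part of degree $b$, and introduce the weight filtration on $A$ determined by $w(x)=w(y)=a$ and $w(z)=b$; since $2\le a\le b$, the defining element $z^{a}-g$ has $w$-initial form $z^{a}-g_b$ of weight $ab$. First I would record what $\br(\m)$ measures: by definition it is the least $r>0$ such that $\overline{\m^{n+1}}=Q\,\overline{\m^{n}}$ for all $n\ge r$, where $Q=(\ell_1,\ell_2)$ is a minimal reduction generated by two general elements of $\m$, each of which has $w$-value $a$ and $w$-initial form a general $k$-combination of $x$ and $y$.

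The heart of the matter is the identification
\[
\overline{\m^{\,n}}=\{\,f\in A : w(f)\ge na\,\}\qquad(n\ge0),
\]
i.e. that the normal filtration of $\m$ coincides with the rescaled weight filtration. Granting this, the associated graded ring is $S=k[x,y,z]/(z^{a}-g_b)$ with $\deg x=\deg y=a$ and $\deg z=b$, and the equality $\overline{\m^{n+1}}=Q\,\overline{\m^{n}}$ translates (by the standard graded-to-filtered lifting) into the purely combinatorial statement $S_{\ge(n+1)a}=(x,y)\,S_{\ge na}$. Now $S$ is free over $k[x,y]$ with basis $1,z,\dots,z^{a-1}$, so $S/(x,y)\cong k[z]/(z^{a})$; hence the only monomials of $S$ not lying in $(x,y)$ are the powers $z^{k}$ with $0\le k\le a-1$. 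Every monomial of $S_{\ge(n+1)a}$ divisible by $x$ or $y$ already lies in $(x,y)S_{\ge na}$, so the sole obstruction to the displayed equality is the presence of some $z^{k}$ $(0\le k\le a-1)$ of weight $kb\ge(n+1)a$.

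Since the largest such weight is $(a-1)b$, the equality $S_{\ge(n+1)a}=(x,y)S_{\ge na}$ holds exactly when $(n+1)a>(a-1)b$; this condition is monotone in $n$, so $\br(\m)$ is the least integer $r$ with $(r+1)a>(a-1)b$, that is $r>(a-1)b/a-1$. A one-line check (separating the cases $(a-1)b/a\in\Z$ and $(a-1)b/a\notin\Z$) gives $r=\fl{(a-1)b/a}$, as claimed. As consistency checks, the formula returns $d-1$ when $a=b=d$ (the homogeneous case of \proref{p:reh}~(3)) and the value $1$ exactly in the rational range, e.g. for $z^{2}=x^{3}+y^{3}$.

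The step I expect to be the genuine obstacle is the identification $\overline{\m^{\,n}}=\{w\ge na\}$, equivalently the assertion that the weight valuation governs the integral closure of all powers of $\m$. When $z^{a}-g_b$ is reduced and $\gcd(a,b)=1$ this is transparent—$w$ is then the unique Rees valuation of $\m$ and $S$ is normal—but for general $g$ the initial form may be non-reduced or reducible (for instance $g_b=x^{b}$ with $\gcd(a,b)>1$), in which case $\m$ can acquire several Rees valuations and $S$ need not be normal. Controlling $\overline{\m^{\,n}}$ in this generality is precisely the content supplied by \cite[Theorem~6.5]{BDMOVWY}; once it legitimizes the identification, the combinatorial count above completes the proof. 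Alternatively one could phrase the argument cohomologically through \proref{p:brq}, computing $q_\m(n)=h^1(\cO_X(-nZ))$ on a weighted resolution, but the filtration argument makes the floor bound most transparent.
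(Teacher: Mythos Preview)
The paper does not actually give a proof of this proposition; it only says that the result is ``an immediate consequence of \cite[Theorem~6.5]{BDMOVWY}'' and stops there. So your proposal is not competing with an argument in the paper but is rather an attempt to unpack what that citation delivers.

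Your combinatorics after the identification $\overline{\m^{\,n}}=\{w\ge na\}$ are correct. Taking $Q=(x,y)$ (legitimate, since $z^a=g\in(x,y)^b$ makes $z$ integral over $(x,y)$), passing to $S=k[x,y,z]/(z^{a}-g_b)$, and using that $x,y$ is a regular sequence in $S$ to justify the graded-to-filtered equivalence $F_{(n+1)a}=(x,y)F_{na}\Leftrightarrow S_d=xS_{d-a}+yS_{d-a}$ for all $d\ge(n+1)a$, all check out. Since $S/(x,y)\cong k[z]/(z^a)$, the obstruction is precisely $z^{a-1}$ of weight $(a-1)b$, and solving $(n+1)a>(a-1)b$ yields $\fl{(a-1)b/a}$ in both the divisible and non-divisible cases. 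Your consistency checks are fine too; note that $z^2=x^3+y^3$ is the $D_4$ rational double point, so $\br(\m)=1$ is the right answer there.

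The one real gap is the one you yourself isolate: the identity $\overline{\m^{\,n}}=\{w\ge na\}$. You attribute it to \cite[Theorem~6.5]{BDMOVWY}, but you are guessing at the content of that theorem rather than citing it precisely. You even note the obstruction: when $z^a-g_b$ is reducible (e.g.\ $g_b$ a perfect $a$-th power) the weight function is not a valuation and $S$ is not a domain, so one cannot simply invoke a single Rees valuation. Given that the paper calls the proposition an \emph{immediate} consequence, it is quite possible that Theorem~6.5 already states the reduction-number formula for hypersurfaces of this shape, in which case your combinatorics are a re-derivation of part of its proof rather than a completion of it. You should check the actual statement: if Theorem~6.5 supplies the filtration identification (equivalently, the normality of the extended Rees algebra of the $w$-filtration), your argument is complete; if it supplies the formula directly, you may simply cite it as the paper does.
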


\begin{ex}
[Tomari {\cite[Example (2.13)]{tomari.max}}]
\label{e:tomari}
Let $A=k[[x,y,z]]/(z^2-(x+y)(x^4+y^6)(x^6+y^4))$.
Then the resolution graph of $A$ is expressed as in \figref{fig:tomari}; $E_1$ and $E_2$ are rational curves, and $E_3$ and $E_4$ are elliptic curves.
\begin{figure}[htb]
\[
\xy
 (0,0)*+{-3}*\cir<10pt>{}="A2"*++!D(-1.5){E_2}; 
(0,10)*+{-2 }*\cir<10pt>{}="A1"*++!R(-1.5){E_1}; 
(-15,0)*+{-1 }*\cir<10pt>{}="A3"*++!D(-1.5){[1]}*+++!L(-1.5){E_3}; 
(15,0)*+{-1 }*\cir<10pt>{}="A4"*++!D(-1.5){[1]}*+++!R(-1.5){E_4}; 
\ar @{-} "A2" ;"A1"  
\ar @{-} "A2" ;"A3" 
\ar @{-} "A2" ;"A4"  
 \endxy
\]
\caption{\label{fig:tomari} The resolution graph of $A$}
\end{figure}
It follows that 
$p_g(A)=8$ and
$p_f(A)=2$. 
On the other hand, by \proref{p:Zar}, we have $\br(\m)=4$.
Hence we obtain 
\[
\br(A) \ge \br(\m) > p_f(A)+1.
\]
\end{ex}

\begin{prob}
Classify
 the singularities $A$ for which $\br(A)\le p_f(A)+1$ 
 and find a suitable lower bound for $\br(A)$.
\end{prob}

%



\begin{thebibliography}{1}

\bibitem{Ar-rat}
Michael Artin, \emph{On isolated rational singularities of surfaces}, 
American J. of Math.  \textbf{88} (1966),  129--136.


\bibitem{BDMOVWY}
Meghana Bhat, Saipriya Dubey, Shreedevi~K. Masuti, Tomohiro Okuma, Jugal~K.
  Verma, Kei-ichi Watanabe, and Ken-ichi Yoshida, \emph{On Gorensteinness of
  associated graded rings of filtrations}, 2024, arXiv:2404.14189.

\bibitem{CFHR}
Fabrizio Catanese, Marco Franciosi, Klaus Hulek, and Miles Reid,
  \emph{Embeddings of curves and surfaces}, Nagoya Math. J. \textbf{154}  (1999), 185--220.

\bibitem{g-r.vanish}
Hans Grauert and Oswald Riemenschneider, \emph{Verschwindungss\"atze f\"ur
  analytische {K}ohomologiegruppen auf komplexen {R}\"aumen}, Invent. Math.
  \textbf{11} (1970), 263--292.


\bibitem{Ko-CC}
Kazuhiro {Konno}, \emph{{Chain-connected component decomposition of curves on
  surfaces.}}, {J. Math. Soc. Japan} \textbf{62} (2010), no.~2, 467--486.

\bibitem{Ko-YC}
\bysame, \emph{{On the Yau cycle of a normal surface singularity.}}, {Asian J.
  Math.} \textbf{16} (2012), no.~2, 279--298.

\bibitem{K-N}
Kazuhiro Konno and Daisuke Nagashima, \emph{Maximal ideal cycles over normal
  surface singularities of {Brieskorn} type}, Osaka J. Math. \textbf{49}
  (2012), no.~1, 225--245.

\bibitem{la.rat}
H.~Laufer, \emph{On rational singularities}, Amer. J. Math. \textbf{94} (1972),
  597--608.

\bibitem{la.me}
\bysame, \emph{On minimally elliptic singularities}, Amer. J. Math. \textbf{99}
  (1977), no.~6, 1257--1295.

\bibitem{NNO}
J{\'a}nos Nagy, Andr{\'a}s N{\'e}methi, and Tomohiro Okuma, 
\emph{Normal reduction number of normal surface singularities}, 
Varieties, Polyhedra, Computation.
European Mathematical Society, 2025,
  pp.~523--549.
arXiv:2108.12274.


\bibitem{Ok1}
Tomohiro Okuma,  \emph{Numerical {G}orenstein elliptic singularities}, Math. Z.
  \textbf{249} (2005), no.~1, 31--62.


\bibitem{Ok2}
\bysame, \emph{Cohomology of ideals in elliptic surface singularities},
  Illinois J. Math. \textbf{61} (2017), no.~3-4, 259--273.

\bibitem{o.NRN}
\bysame, \emph{Normal reduction numbers of normal surface singularities},
  Singularities and their interaction with geometry and low dimensional
  topology---in honor of {A}ndr\'as {N}\'emethi, Trends Math.,
  Birkh\"auser/Springer, Cham, 2021, pp.~159--179.

\bibitem{ORWY}
Tomohiro Okuma, Maria~Evelina Rossi, Kei-ichi Watanabe, and Ken-ichi Yoshida,
  \emph{Normal {Hilbert} coefficients and elliptic ideals in normal
  two-dimensional singularities}, Nagoya Math. J. \textbf{248} (2022),
  779--800.

\bibitem{OWY1}
Tomohiro Okuma, Kei-ichi Watanabe  and Ken-ichi Yoshida,  \emph{Good ideals and
  {$p_g$}-ideals in two-dimensional normal singularities}, Manuscripta Math.
  \textbf{150} (2016), no.~3-4, 499--520.


\bibitem{OWY2}
\bysame,
 \emph{Rees algebras and {$p_g$}-ideals in a two-dimensional normal
  local domain}, Proc. Amer. Math. Soc. \textbf{145} (2017), no.~1, 39--47.


%

\bibitem{OWY4}
\bysame,
\emph{{Normal reduction numbers for normal surface singularities 
with application to elliptic singularities of Brieskorn type}}, 
Acta Mathematica Vietnamica \textbf{44} (2019), no. 1, 87--100. 


\bibitem{OWY5}
\bysame,
\emph{The normal reduction number of two-dimensional cone-like singularities},
Proc. Amer. Math. Soc. {\bf 149} (2021), no.11, 4569--4581. 

\bibitem{Ro}
A.~R{\"o}hr, \emph{A vanishing theorem for line bundles on resolutions of
  surface singularities}, Abh. Math. Sem. Univ. Hamburg \textbf{65} (1995),
  215--223.



\bibitem{tomari.ell}
Masataka Tomari, \emph{A {$p_g$}-formula and elliptic singularities}, Publ. Res. Inst.
  Math. Sci. \textbf{21} (1985), no.~2, 297--354.

\bibitem{tomari.max}
\bysame,  \emph{Maximal-ideal-adic filtration on {${\mathbf R}^1\psi_*{\mathcal
  O}_{\tilde V}$} for normal two-dimensional singularities}, Complex analytic
  singularities, Adv. Stud. Pure Math., vol.~8, North-Holland, Amsterdam, 1987,
  pp.~633--647.

\bibitem{tki-w}
Masataka Tomari and Kei-ichi Watanabe, \emph{Filtered rings, filtered
  blowing-ups and normal two-dimensional singularities with ``star-shaped''
  resolution}, Publ. Res. Inst. Math. Sci. \textbf{25} (1989), no.~5, 681--740.

\bibitem{wag.ell}
Philip Wagreich, \emph{Elliptic singularities of surfaces}, Amer. J. Math.
  \textbf{92} (1970), 419--454.
\end{thebibliography}
\end{document}